\def\Z{\mathbb{Z}}
\def\Q{\mathbb{Q}}
\def\R{\mathbb{R}}
\def\C{\mathbb{C}}
\def\r{{r}}
\def\Q{{\mathbb Q}}
\def\G{{G}}
\def\Z{{\mathbb Z}}
\def\C{{\mathbb C}}
\def\R{{\mathbb R}}
\def\WPic{\mathrm{WPic}}
\def\Pic{\mathrm{Pic}}
\def\Pic{\mathrm{Pic}}
\def\Aut{\mathrm{Aut}}
\def\M{\mathrm{M}}
\def\Hom{\mathrm{Hom}}
\def\GL{\mathrm{GL}}
\def\dim{\mathrm{dim}}
\def\log{\mathrm{log}}
\def\det{\mathrm{det}}
\def\rk{\mathrm{rank}}
\def\rank{\mathrm{rank}}
\def\I{{\mathcal I}}
\def\m{{\mathfrak m}}
\def\FFF{{\mathcal F}}
\def\isom{\xrightarrow{\sim}}
\numberwithin{equation}{section}
\newtheorem{thm}{Theorem}
\numberwithin{thm}{section}
\newtheorem{lem}[thm]{Lemma}
\newtheorem{cor}[thm]{Corollary}
\newtheorem{prop}[thm]{Proposition}
\theoremstyle{definition}
\newtheorem{defn}[thm]{Definition}
\newtheorem{notation}[thm]{Notation}
\newtheorem{algorithm}[thm]{Algorithm}
\newtheorem{ex}[thm]{Example}
\newtheorem{exs}[thm]{Examples}
\newtheorem{rem}[thm]{Remark}
\title[Lattices with Symmetry]{Lattices with Symmetry}
\author[H.\ W.\ Lenstra, Jr.]{H.\ W.\ Lenstra, Jr.}
\address{Mathematisch Instituut, Universiteit Leiden, The Netherlands}
\email{hwl@math.leidenuniv.nl}
\author[A.\ Silverberg]{A.\ Silverberg}
\address{Department of Mathematics, University of California, Irvine, CA 92697, USA}
\email{asilverb@uci.edu}
\begin{document}

\keywords{lattices, Gentry-Szydlo algorithm, ideal lattices,
lattice-based crypto\-graphy}
\thanks{This material is based on research sponsored by DARPA under agreement numbers FA8750-11-1-0248 and FA8750-13-2-0054 and by the Alfred P.~Sloan Foundation. The U.S.\ Government is authorized to reproduce and distribute reprints for Governmental purposes notwithstanding any copyright notation thereon. The views and conclusions contained herein are those of the authors and should not be interpreted as necessarily representing the official policies or endorsements, either expressed or implied, of DARPA or the U.S.\ Government.
}

\begin{abstract} 
For large ranks, there is no good algorithm that decides whether a 
given lattice has an orthonormal basis. But when the lattice is 
given with enough symmetry, we can construct a provably deterministic
polynomial-time algorithm 
to accomplish this, based on the work of Gentry and Szydlo.
The techniques involve algorithmic algebraic number theory,
analytic number theory, commutative algebra, and lattice basis reduction.
\end{abstract}

\maketitle

\section{Introduction}

Let $G$ be a finite abelian group and let $u\in G$  be a fixed element of order $2$.
Define a $G$-lattice to be an integral lattice $L$ with an 
action of $G$ on $L$ that preserves the inner product, such that
$u$ acts as $-1$.
The {\em standard} $G$-lattice is the modified group ring 
$\Z\langle\G\rangle = \Z[G]/(u+1)$,
equipped with a natural inner product; we refer to Sections
\ref{backgroundsect}, \ref{backgroundsect2}, and \ref{modgpringsect} for more precise definitions.
Our main result reads as follows:

\begin{thm}
\label{mainthm}
There is a deterministic polynomial-time algorithm 
that, given a finite abelian group $\G$ with 
an element $u$ of order $2$, and a $\G$-lattice $L$,
decides whether $L$ and $\Z\langle\G\rangle$ are isomorphic as $G$-lattices, 
and if they are, exhibits such an isomorphism.
\end{thm}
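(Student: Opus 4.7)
The plan is to rephrase the isomorphism problem in commutative-algebraic terms and then reduce it to a Gentry--Szydlo-type principal element problem in each isotypic component. Set $R=\Z\langle\G\rangle$ and let $r\mapsto\bar r$ be the $\Z$-linear involution of $R$ induced by $g\mapsto g^{-1}$. A $\G$-lattice is the same data as an $R$-module $L$ equipped with an $R$-sesquilinear, $(\Q\otimes R)$-valued Hermitian form $[\,,\,]$ whose trace down to $\Z$ recovers the integral inner product. Under this dictionary, $L\cong\Z\langle\G\rangle$ as a $\G$-lattice if and only if there exists $x\in L$ with $[x,x]=1$ and $Rx=L$, so the algorithmic task becomes: find such an $x$, or certify that none exists.

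To break this into manageable pieces, I would decompose $\Q\otimes R$ using the characters of $\G$ that send $u$ to $-1$. This exhibits $\Q\otimes R$ as a product $\prod_i K_i$ of CM number fields, on each of which $r\mapsto\bar r$ acts as complex conjugation. A character count first settles whether $L\otimes\Q$ is free of rank one over $\Q\otimes R$, a prerequisite for the desired isomorphism; assuming so, clearing denominators presents each component as a fractional ideal $\a_i$ in an order $\O_i\subseteq K_i$, and $[\,,\,]$ becomes multiplication by a totally positive element $c_i\in K_i^{\times}$. Finding $x\in L$ with $[x,x]=1$ then amounts to finding, in each component, a generator $x_i\in\a_i$ with $x_i\bar{x}_i=c_i$, together with the global constraint that $(x_i)$ actually lies in $L\subseteq\bigoplus_i\a_i$, not merely in its $\Q$-span.

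The heart of the algorithm is therefore a generalized Gentry--Szydlo step: given a CM order $\O$, a fractional $\O$-ideal $\a$, and a totally positive $c\in\O$, decide whether $c=y\bar y$ for some $y\in\a$ and construct such a $y$ when it exists. Gentry and Szydlo treated this for certain cyclotomic rings, and extending their technique to the arbitrary CM orders arising from modified group rings is what I expect to be the main obstacle: this is where algorithmic algebraic number theory (orders, conductors, class groups), analytic number theory (effective bounds and Dirichlet-style arguments on units), and lattice basis reduction (to convert the approximate logarithmic data supplied by $c$ into an exact preimage in $\a$) all enter. Once a $y_i$ is recovered in each factor up to a torsion unit, a bounded search over torsion tuples, constrained by the requirement that $(y_i)$ lie in $L$, yields either an explicit isomorphism $R\isom L$ or a proof that none exists.
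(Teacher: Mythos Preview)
Your outline correctly identifies the dictionary between $G$-lattices and Hermitian $R$-modules, and the reduction to finding $x\in L$ with $[x,x]=1$ and $Rx=L$ is exactly right (this is the paper's Theorem~\ref{shortthm}). But from that point on there are two genuine gaps.

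First, the ``generalized Gentry--Szydlo step'' you invoke is not a black box you can cite; it is the entire content of the theorem. Saying that extending the Gentry--Szydlo technique to arbitrary CM orders ``is what I expect to be the main obstacle'' is an accurate diagnosis, but it is not a proof. The paper's actual contribution is precisely a concrete mechanism for this step: one computes very high tensor powers $L^{k(m)}$ and $L^{k(\ell)}$ for carefully chosen prime powers $\ell,m$ (found via Linnik's theorem so that $\gcd(k(\ell),k(m))$ equals the exponent $k$ of $G$), locates $e^{k(m)}$ as the unique short vector in a known coset of $mL^{k(m)}$ via an LLL-based nearest-vector computation, and combines the two to pin down $e^k$. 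The final extraction of $e$ from $e^k$ is done by a separate algorithm that computes the group of roots of unity in an explicit order of $\Z$-rank $nk$. None of these ingredients appears in your sketch.

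Second, even granting a per-component oracle, your gluing step is not polynomial as stated. The decomposition $\Q\langle G\rangle\cong\prod_i K_i$ can have up to $n$ factors, and the torsion-unit group of the product is $\prod_i\mu(\O_{K_i})$, whose order is in general exponential in $n$. A solution $(y_i)_i$ found componentwise need not lie in $L$ (since $\Z\langle G\rangle$ is typically far from the maximal order $\prod_i\O_{K_i}$), and searching over all torsion tuples to repair this is not a bounded search. The paper avoids this entirely by never decomposing: it works globally in $\Z\langle G\rangle$ and in the extended tensor algebra $\Lambda=\bigoplus_{i\in\Z}L^i$, so that the only root-of-unity computation needed is in a single order $A=\bigoplus_{i=0}^{k-1}L^i$, where $\#\mu(A)\le 2nk\le(2n)^2$.
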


We call a $G$-lattice $L$ {\em invertible} if it is unimodular and
there is a  $\Z\langle G\rangle$-module $M$ such that
$L \otimes_{\Z\langle G\rangle} M$ and $\Z\langle G\rangle$
are isomorphic as $\Z\langle G\rangle$-modules
(see Definition \ref{invertdef2} and Theorem \ref{invertequiv}).
For example, the standard $G$-lattice is invertible.
The following result is a consequence of 
Theorem \ref{mainthm}.

\begin{thm}
\label{mainthmcor}
There is a deterministic polynomial-time algorithm 
that, given a finite abelian group $\G$ equipped with 
an element of order $2$, and invertible $\G$-lattices $L$ and $M$,
decides whether $L$ and $M$ are isomorphic as $G$-lattices, 
and if they are, exhibits such an isomorphism.
\end{thm}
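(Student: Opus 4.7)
The plan is to reduce Theorem~\ref{mainthmcor} to Theorem~\ref{mainthm} by invoking a group law on invertible $G$-lattices. Set $A := \Z\langle G\rangle$. The paper's development leading up to and including Theorem~\ref{invertequiv} should yield that isomorphism classes of invertible $G$-lattices form an abelian group under $\otimes_A$, with identity the class of $A$; in particular, an invertible $G$-lattice $N$ is isomorphic to $A$ as a $G$-lattice if and only if its class in this group is trivial. Granted this, the algorithm on input $L, M$ proceeds in four steps. First, construct an invertible $G$-lattice $M^{-1}$ together with an explicit $G$-lattice isomorphism $\alpha \colon M \otimes_A M^{-1} \isomap A$. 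Second, form $N := L \otimes_A M^{-1}$, which is invertible because $[N] = [L] \cdot [M]^{-1}$ in the group. Third, run the algorithm of Theorem~\ref{mainthm} on $N$: if it reports $N \not\cong A$, output $L \not\cong M$. Otherwise, it returns an isomorphism $\beta \colon N \isomap A$, and we convert it into an isomorphism $L \isomap M$ via the chain
\[
L \;\cong\; L \otimes_A A \;\xrightarrow{1 \otimes \alpha^{-1}}\; L \otimes_A (M^{-1} \otimes_A M) \;\cong\; N \otimes_A M \;\xrightarrow{\beta \otimes 1}\; A \otimes_A M \;\cong\; M.
\]
Correctness is then immediate: $L \cong M$ forces $[N] = [A]$, so $N \cong A$; conversely, the chain produces a genuine $G$-lattice isomorphism whenever $\beta$ is one.

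The constructive content reduces to linear algebra over $A$. A natural candidate for $M^{-1}$ is $\Hom_A(M, A)$, a projective $A$-module of polynomially bounded presentation. Its $G$-lattice structure is induced from that of $M$ by the Hermitian refinement $\langle x, y\rangle_A := \sum_{g \in G} \langle x, gy\rangle \, g^{-1} \in A$ of the $\Z$-valued inner product, which for unimodular invertible $M$ is a perfect $A$-bilinear pairing and thus furnishes both a concrete inner product on $M^{-1}$ and the evaluation isomorphism $\alpha$. The tensor product $L \otimes_A M^{-1}$ is then built by the standard recipe: take the $\Z$-tensor of chosen $\Z$-bases, quotient by the polynomial-sized system of $A$-bilinearity relations, and equip the quotient with the product of the two inner products; the resulting Gram matrix and $G$-action are explicit.

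The principal obstacle is size control. Tensor products multiply ranks, and inverting an element of the ``Picard group'' of invertible $G$-lattices is analogous to inverting a fractional ideal, a process that can introduce long integer denominators and numerators. To feed $N$ into Theorem~\ref{mainthm} with a Gram matrix of polynomial bit length, one has to intersperse LLL-reductions, replacing $M^{-1}$ and $N$ by isomorphic $G$-lattices with short bases and small Gram matrices (working, for instance, with $A$-module generators paired with a reduced $\Z$-basis so that the $G$-action remains explicit). This is precisely the kind of bookkeeping that the paper already handles in its approach to Theorem~\ref{mainthm} via the Gentry--Szydlo framework, so the techniques needed are in hand; verifying the polynomial-time bound is then a matter of carefully tracking bit lengths across the four steps of the algorithm above.
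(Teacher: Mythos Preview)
Your reduction is correct and is exactly the paper's approach: form the tensor product of $L$ with an inverse of $M$, apply Theorem~\ref{mainthm} to that, and unwind. The one point where you make life harder than necessary is the construction of $M^{-1}$. The paper takes $M^{-1} = \overline{M}$, which is literally the same underlying lattice $M$ with the $G$-action precomposed with $\sigma \mapsto \sigma^{-1}$ (Definition~\ref{Lbardefn}); the pairing $\alpha \colon M \otimes_A \overline{M} \to A$ is then the lifted inner product $x \otimes \overline{y} \mapsto x \cdot \overline{y}$ of Definition~\ref{dotproddef}, which is an isomorphism by Theorem~\ref{invertequiv}(b). This makes the ``inversion'' step free---no denominators, no Hom-module bookkeeping, and the Gram matrix of $\overline{M}$ equals that of $M$---so the size-control concerns you raise apply only to the single tensor product $L \otimes_A \overline{M}$, which is handled by one application of Algorithm~\ref{LMmultalg} with its built-in LLL step. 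Your $\Hom_A(M,A)$ is of course $G$-isomorphic to $\overline{M}$ via the same lifted pairing, so nothing is wrong; it is just a more roundabout description of the same object.
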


Deciding whether two lattices are isomorphic is a
notorious problem. Our results show that it admits a satisfactory
solution if the lattices are equipped with 
sufficient structure.

Our algorithms and runtime estimates draw upon an array of techniques
from algorithmic algebraic number theory,
commutative algebra,
lattice basis reduction,
and analytic number theory.
We develop techniques from commutative algebra that have not yet
been fully exploited in the context of cryptology.

An important ingredient to our algorithm is a powerful
novel technique that was invented by C.~Gentry and M.~Szydlo
in Section 7 of \cite{GS}. 
We recast their method in the language of commutative algebra,
replacing the ``polynomial chains'' that they used to compute
powers of ideals in certain rings by tensor powers of modules.
A number of additional changes enabled us to obtain a
{\em deterministic} polynomial-time algorithm, whereas the
Gentry-Szydlo algorithm is at best probabilistic.

The technique of Gentry and Szydlo has seen several applications in
cryptography, as enumerated in \cite{LenSil}.
By placing it in an algebraic framework, we have already
been able to generalize the method significantly, replacing
the rings $\Z[X]/(X^n-1)$ (with $n$ an odd prime) used by
Gentry and Szydlo by the larger class of modified group rings
that we defined above, and further extensions appear to be possible.
In addition, we hope that our reformulation will make it easier to
understand the method and improve upon it.
This should help to make it more widely applicable in a cryptographic
context.

\subsection{Overview of algorithm proving Theorem \ref{mainthm}}
The algorithm starts by testing whether the given $G$-lattice
$L$ is {\em invertible}, which is a necessary condition for being
isomorphic to the standard $G$-lattice.
Invertibility is a concept with several attractive properties.
For example, it is easy to test. 
Secondly, every invertible $G$-lattice has rank $\#G/2$ and 
determinant $1$, and therefore can be specified using a small number of bits
(Proposition~\ref{LLLlem} below, and the way it is used to
prove Theorem~\ref{WPgpfin}).
Thirdly, an invertible $G$-lattice
$L$ is isomorphic to the standard one if and only if there is a
{\em short} element $e\in L$, that is, an element of length $1$.

Accordingly, most of the algorithm consists of looking for
short elements in invertible $G$-lattices, or proving that none exists.
The main tool for this is a further property of invertible $G$-lattices,
which concerns {\em multiplication}. 
As the name suggests, any invertible $G$-lattice $L$ has an {\em inverse}
$\overline{L}$, which is also an invertible $G$-lattice, and any two
invertible $G$-lattices $L$ and $M$ can be {\em multiplied} using
a tensor product operation, which yields again an invertible $G$-lattice.
For example, the product of $L$ and $\overline{L}$ is the standard
$G$-lattice $\Z\langle G\rangle$.

No sequence of multiplications will ever give rise to
coefficient blow-up since, as remarked above, 
every invertible $G$-lattice can be specified using a small number of bits.
It suffices to take the simple precaution
of performing a lattice basis reduction after every multiplication
(as in Algorithm \ref{LMmultalg}). 
It is a striking consequence that 
even very high powers $L^r$ of $L$ can be efficiently computed!

Each short element $e\in L$ gives rise to a short element
$e^r\in L^r$, which may be thought of as the $r$-th power of $e$.
If $r$ is well-chosen ($r=k(\ell)$, in the notation of Algorithm \ref{algor}),
then $e^r$ will satisfy a congruence condition (modulo $\ell$),
and if we take $\ell$ large enough
this enables us to determine $e^r$ (or show that no $e$ exists).
However, passing directly from $e^r$ to $e$ is infeasible due to the
large size of $r$. Thus, one also finds $e^s\in L^s$ for a 
second well-chosen large number $s$ ($=k(m)$, in Algorithm \ref{algor}),
and a multiplicative combination of $e^r$ and $e^s$ yields
$e^{\gcd(r,s)}\in L^{\gcd(r,s)}$.
A result from analytic number theory shows that $r$ and $s$ can be
chosen such that $\gcd(r,s)$ ($=k$, in Algorithm \ref{algor})
is so small that $e$, if it exists, can be found from
$e^{\gcd(r,s)}$ by a relatively easy root extraction.
The latter step requires techniques (Proposition \ref{rootsofunitylem}) 
of a nature entirely different from those in the present paper,
and is therefore delegated to a separate publication \cite{rootsofunity}.

While we believe that the techniques introduced here could lead
to practical algorithms, we did not attempt an actual
implementation. Also, any choices and recommendations we
made were inspired by the desire to give a clean proof of our
theorem rather than efficient algorithms.

\subsection{Structure of the paper}
Sections \ref{backgroundsect}--\ref{latticecosetssect} contain background on 
integral lattices. In particular, we derive a new bound 
for the entries of a matrix describing
an automorphism of a unimodular lattice with respect to a reduced basis 
(Proposition \ref{LLLlem}).
Sections \ref{backgroundsect2}--\ref{recovergssect}
contain basic material about $G$-lattices and modified group rings.
Important examples of $G$-lattices are the ideal lattices introduced
in Section \ref{ideallatsect}.
In Remark \ref{GSrmk} we explain how to recover the Gentry-Szydlo algorithm from 
Theorem \ref{mainthmcor}.
In Sections 
\ref{conceptsforpfsect}--\ref{equivinvertdefsect} we begin our
study of 
invertible $G$-lattices,  giving several equivalent definitions and
an algorithm for recognizing invertibility.
Section \ref{shorttensorsect} is devoted to the following pleasing
result: a $G$-lattice is $G$-isomorphic to the
standard one
if and only if it is invertible and has a vector
of length $1$.
In Sections \ref{tensoringsect}--\ref{WPsect} we show
how to multiply invertible $G$-lattices and we introduce
the Witt-Picard group of $\Z\langle G\rangle$, of which the elements
correspond to 
$G$-isomorphism classes of invertible $G$-lattices.
It has properties reminiscent of the class group in algebraic number
theory; in particular, it is a finite abelian group 
(Theorems \ref{WPicabgpthm} and \ref{WPgpfin}).
We also show how to do computations in the Witt-Picard group.
In Section \ref{tensorsect} we treat the extended tensor algebra $\Lambda$,
which is in a sense the hero of story: it is a single algebraic structure
that comprises all rings and lattices occurring in our main algorithm.
Section \ref{shortvectalgsect} shows how $\Lambda$ can be used to assist
in finding vectors of length $1$. In Section \ref{LinnikApp} we 
use Linnik's theorem from analytic number theory in order to find auxiliary
numbers in our main algorithm, and our main algorithm is presented 
in Section \ref{algorsect}. 

\subsection{Notation}
For the purposes of this paper, commutative rings have an identity
element $1$, which may be $0$. If $R$ is a commutative ring,
let $R^\ast$ denote the group of elements of $R$ that
have a multiplicative inverse in $R$.

\section{Integral lattices}
\label{backgroundsect}

We begin with some background on lattices and on
lattice automorphisms (see also \cite{HWLMSRI}).

\begin{defn}
A {\bf lattice} or {\bf integral lattice} is a finitely generated abelian group $L$ 
 with a map $\langle \, \cdot \, , \,  \cdot \,  \rangle : L \times L \to \Z$ that is
\begin{itemize}

\item bilinear: 
$\langle x, y+z \rangle = \langle x,y \rangle + \langle x,z \rangle$ and
$\langle x + y, z \rangle = \langle x,z \rangle + \langle y,z \rangle$
for all $x,y,z\in L$,
\item symmetric: $\langle x, y \rangle = \langle y, x \rangle$ for all $x,y\in L$, and
\item positive definite: 
$\langle x, x \rangle > 0$ if $0\neq x\in L$.
\end{itemize}
\end{defn}

As a group, $L$ is isomorphic to $\Z^n$ for some $n \in \Z_{\ge 0}$, 
which is called the {\bf rank} of $L$ and is denoted $\rank(L)$.
In algorithms, 
a lattice is specified by a 
Gram matrix $(\langle b_i,b_j\rangle )_{i,j=1}^n$ associated to a
$\Z$-basis $\{ b_1,\ldots, b_n \}$ and an element of a lattice
is specified by its coefficient vector on the same basis.
The inner product
$\langle \, \cdot \, , \,  \cdot \,  \rangle$ 
extends to a real-valued inner product on $L\otimes_\Z \R$
and makes $L\otimes_\Z \R$ into a Euclidean vector space.

\begin{defn}
The {\bf standard lattice} of rank $n$ is $\Z^n$ with
$\langle x, y \rangle = \sum_{i=1}^n x_iy_i.$
Its Gram matrix is the $n \times n$ identity matrix. 
\end{defn}

\begin{defn}
\label{unimodulardef}
The determinant $\det(L)$ of a lattice $L$ is the determinant of the Gram matrix of $L$;
equivalently, $\det(L)$ is the order of the cokernel of the map $L \to \Hom(L,\Z)$,
$x \mapsto (y \mapsto \langle x,y \rangle)$.
A lattice $L$ is {\bf unimodular} if this map 
is bijective, i.e., if $\det(L)=1$.
\end{defn}

\begin{defn}
\label{latisomdefn}
An {\bf isomorphism} $L \isom M$ of lattices is a group
isomorphism $\varphi$ from $L$ to $M$ that respects the lattice
structures, i.e.,
$$
\langle \varphi(x), \varphi(y) \rangle = \langle x, y \rangle
$$
for all $x,y\in L$.
If such a map $\varphi$ exists, then $L$ and $M$ are {\bf isomorphic} lattices.
An {\bf automorphism} of a lattice $L$ is an isomorphism from $L$ to itself.
The set of automorphisms of $L$ is a finite group $\Aut(L)$ whose center
contains $-1$. 
\end{defn}

In algorithms, 
isomorphisms are specified by their matrices on the given bases of $L$ and $M$.

\begin{exs} $ $
\begin{enumerate}
\item
``Random'' lattices have $\Aut(L) = \{\pm 1\}$.
\item
Letting $S_n$
denote the symmetric group on $n$ letters and $\rtimes$ denote
 semidirect product, we have
$\Aut(\Z^n) \cong \{\pm 1\}^n \rtimes S_n$. (The standard basis vectors can
be permuted, and signs changed.)
\item
If $L$ is the equilateral triangular lattice in the plane,  
then $\Aut(L)$ is the symmetry group of the regular hexagon, which is
a dihedral group of order $12$.
\end{enumerate}
\end{exs}

\section{Reduced bases and automorphisms}
\label{LLLmiscsect}

The main result of this section is Proposition \ref{LLLlem},
in which we obtain some bounds for LLL-reduced
bases of unimodular lattices.
We will use this result to give bounds on the complexity of our algorithms  
and to show that
the Witt-Picard group (Definition \ref{WPgpdefn} below) is finite.
If $L$ is a lattice and $a \in L\otimes_\Z \R$, let $|a| = \langle a,a \rangle^{1/2}$.

\begin{defn}
\label{LLLreduceddefn}
If $\{ b_1,\ldots,b_n\}$ is a basis for a lattice $L$,
and $\{ b_1^\ast,\ldots,b_n^\ast\}$
is its Gram-Schmidt orthogonalization, 
and $$b_i = b_i^\ast + \sum_{j=1}^{i-1} \mu_{ij}b_j^\ast$$
with $\mu_{ij}\in \R$,
then
$\{ b_1,\ldots,b_n\}$ is {\bf LLL-reduced} if 
\begin{enumerate}
\item
$|\mu_{ij}| \le \frac{1}{2}$ for all $j<i\le n$, and 
\item
$|b_i^\ast|^2 \le 2|b_{i+1}^\ast|^2$
for all $i<n$.
\end{enumerate}
\end{defn}

\begin{rem}
\label{LLLredrmk}
The LLL basis reduction algorithm \cite{LLL} takes as input a lattice,
and produces an LLL-reduced basis of the lattice, in polynomial time.
\end{rem}

\begin{lem}
\label{matrixentrybd}
If $a = (\mu_{ij})_{ij}\in \M(n,\R)$ is a lower-triangular real matrix with
$\mu_{ii} = 1$ for all $i$ and $|\mu_{ij}| \le 1/2$ for all $j < i$,
and $a^{-1} = (\nu_{ij})_{ij}$,
then 
$$|\nu_{ij}| \le 
\begin{cases}
0 & \text{ if $i< j$} \\
1 & \text{ if $i= j$} \\
\frac{1}{3}\left(\frac{3}{2}\right)^{i-j} & \text{ if $i> j$.} 
\end{cases}
$$
\end{lem}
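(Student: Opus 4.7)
The plan is to exploit the identity $a \cdot a^{-1} = I$ to set up a recursion for the entries $\nu_{ij}$, and then to prove the bound by induction on $i-j$. The vanishing of $\nu_{ij}$ for $i < j$ and the value $\nu_{ii} = 1$ are immediate from the fact that the inverse of a lower-triangular matrix with ones on the diagonal is again lower-triangular with ones on the diagonal.

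For the nontrivial case $i > j$, I would expand the $(i,j)$-entry of $a a^{-1}$. Because $\mu_{ik} = 0$ for $k > i$ and $\nu_{kj} = 0$ for $k < j$, only the range $j \le k \le i$ contributes, and isolating the term with $k = i$ (where $\mu_{ii} = 1$) gives the recursion
\begin{equation*}
\nu_{ij} \;=\; -\sum_{k=j}^{i-1} \mu_{ik}\,\nu_{kj}.
\end{equation*}

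Now I would do induction on $n := i - j$. For the base case $n = 1$, the sum has only the single term $k = j$, so $\nu_{ij} = -\mu_{ij}$ and $|\nu_{ij}| \le 1/2 = \frac{1}{3}(3/2)$. For the inductive step, split off the $k = j$ term (where $\nu_{jj} = 1$) and apply the inductive hypothesis to the rest:
\begin{equation*}
|\nu_{ij}| \;\le\; |\mu_{ij}| + \sum_{k=j+1}^{i-1} |\mu_{ik}|\,|\nu_{kj}|
\;\le\; \tfrac{1}{2} + \tfrac{1}{2}\sum_{k=j+1}^{i-1} \tfrac{1}{3}\bigl(\tfrac{3}{2}\bigr)^{k-j}.
\end{equation*}
Reindexing by $m = k - j$ and summing the finite geometric series $\sum_{m=1}^{n-1}(3/2)^m = 2(3/2)^n - 3$ yields
\begin{equation*}
|\nu_{ij}| \;\le\; \tfrac{1}{2} + \tfrac{1}{6}\bigl(2(3/2)^n - 3\bigr) \;=\; \tfrac{1}{3}\bigl(\tfrac{3}{2}\bigr)^n,
\end{equation*}
which is exactly the desired bound. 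There is no serious obstacle: the only thing to watch is keeping the indices straight and verifying that the geometric series telescopes to produce the clean constant $1/3$ in front of $(3/2)^{i-j}$, which it does precisely because the leading $1/2$ from the $k=j$ term cancels with the $-3 \cdot \tfrac{1}{6}$ arising from the lower bound of the geometric sum.
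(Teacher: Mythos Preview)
Your proof is correct. The recursion you extract from $aa^{-1}=I$ is right, the base case is right, and the geometric sum does collapse exactly to $\tfrac{1}{3}(3/2)^{i-j}$ as you claim.

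The paper takes a different route. Rather than induct on $i-j$, it introduces the extremal matrix $e$ with $e_{ij}=\tfrac12$ for $j<i$ and $0$ otherwise, writes $e=h/(2(1-h))$ where $h$ is the nilpotent subdiagonal shift, and computes $(1-e)^{-1}=(1-h)/(1-\tfrac32 h)$ explicitly as a closed-form matrix whose $(i,j)$-entry for $i>j$ is exactly $\tfrac13(3/2)^{i-j}$. The bound then follows from the entrywise inequality $|a^{-1}|=\bigl|\sum_{i\ge 0}(1-a)^i\bigr|\le\sum_{i\ge 0}e^i=(1-e)^{-1}$. Your induction is more elementary and self-contained; the paper's generating-function argument is slicker once set up and makes transparent why the constant $\tfrac13$ and the base $\tfrac32$ appear, since they come directly from the rational function $(1-h)/(1-\tfrac32 h)$.
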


\begin{proof}
Define $e\in \M(n,\R)$  by $e_{ij} = 0$
if $j\ge i$ and $e_{ij} = \frac{1}{2}$
if $j < i$.
Define $h\in \M(n,\R)$ by  
$h_{i+1,i} = 1$ for $i=1,\ldots,n-1$ and $h_{ij}  = 0$ otherwise.
Then $$e = \sum_{j=1}^\infty \frac{1}{2}h^j = \frac{h}{2(1-h)}.$$
Thus, 
$1-e = (1-3h/2)/(1-h)$ and
\begin{multline*}
(1-e)^{-1} = (1-h)/(1-3h/2) \\
= (1-h)\sum_{j=0}^\infty \left(\frac{3}{2}\right)^jh^j  
 =
\sum_{j=0}^\infty \left(\frac{3}{2}\right)^jh^j - 
\sum_{j=0}^\infty \left(\frac{3}{2}\right)^jh^{j+1} = \\
\begin{pmatrix}
1 & 0 & \cdots & 0 \\
\frac{3}{2} & 1 & \cdots & 0 \\
\phantom{2}(\frac{3}{2})^2 & \frac{3}{2} & \cdots & 0 \\
\vdots & \vdots & \ddots & \vdots \\
\phantom{n-}(\frac{3}{2})^{n-1} & \phantom{n-}(\frac{3}{2})^{n-2} & \cdots & 1
\end{pmatrix} - 
\begin{pmatrix}
0 & 0 & \cdots & 0 & 0 & 0 \\
1 & 0 & \cdots & 0 & 0 & 0 \\
\frac{3}{2} & 1 & \cdots  & 0 & 0 & 0 \\
\vdots & \vdots & \ddots & \vdots  & \vdots & \vdots \\
\phantom{n-}(\frac{3}{2})^{n-2} & \phantom{n-}(\frac{3}{2})^{n-3} & \cdots & \frac{3}{2} & 1 & 0
\end{pmatrix},
\end{multline*}
which has $ij$ entry
$0$ if $i< j$, and 
$1$ if $i= j$, and
$\frac{1}{3}\left(\frac{3}{2}\right)^{i-j}$ if $i> j$.

Since $e^n = 0 = (1-a)^n$, we have
$$(1-e)^{-1} = \sum_{i=0}^{n-1} e^i \quad \text{ and } \quad
a^{-1} = \sum_{i=0}^{n-1} (1-a)^i.$$
If $c = (c_{ij})_{ij}\in \M(n,\R)$, let $|c|$ denote $(|c_{ij}|)_{ij}$.
If $c,d\in \M(n,\R)$, then 
$c \le d$ means that $c_{ij} \le d_{ij}$ for all $i$ and $j$.
We have $$|a^{-1}| \le \sum_{i=0}^{n-1} |1-a|^i \le \sum_{i=0}^{n-1} e^i = (1-e)^{-1}.$$
This gives the desired result.
\end{proof}

\begin{prop}
\label{LLLlem}
If $\{ b_1,\ldots,b_n\}$ is an LLL-reduced basis for an
integral unimodular lattice $L$ and $\{ b_1^\ast,\ldots,b_n^\ast\}$
is its Gram-Schmidt orthogonalization, then
\begin{enumerate}
\item
$2^{1-i} \le |b_i^\ast|^2 \le 2^{n-i}$,
\item
$|b_i|^2 \le 2^{n-1}$  for all $i \in \{ 1,\ldots,n\}$,
\item
$|\langle b_i,b_j\rangle| \le 2^{n-1}$ for all $i$ and $j$,
\item
if $\sigma\in\Aut(L)$,
and for each $i$ we have 
$\sigma(b_i) = \sum_{j=1}^n a_{ij}b_j$ with $a_{ij} \in \Z$,
then $|a_{ij}| \le 3^{n-1}$ for all $i$ and $j$.
\end{enumerate}
\end{prop}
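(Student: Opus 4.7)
My plan is to treat the four parts in order, with (i) providing the essential inputs for the rest. For (i), the lower bound $|b_i^*|^2 \ge 2^{1-i}$ follows from integrality: since $b_1\neq 0$, $|b_1|^2 = |b_1^*|^2$ is a positive integer, so $|b_1^*|^2 \ge 1$, and iterating the LLL ratio $|b_k^*|^2 \le 2|b_{k+1}^*|^2$ gives $|b_i^*|^2 \ge |b_1^*|^2/2^{i-1} \ge 2^{1-i}$. For the upper bound $|b_i^*|^2 \le 2^{n-i}$, I would observe that the Gram matrix of $b_1,\dots,b_{n-1}$ is an integer symmetric matrix of positive determinant, so $\prod_{k<n}|b_k^*|^2 \ge 1$; combined with unimodularity $\prod_{k=1}^n |b_k^*|^2 = 1$ this forces $|b_n^*|^2 \le 1$, and the LLL ratio propagates the bound upward as $|b_i^*|^2 \le 2^{n-i}|b_n^*|^2 \le 2^{n-i}$.

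Part (ii) is then a short direct computation: expand $|b_i|^2 = |b_i^*|^2 + \sum_{k<i}\mu_{ik}^2 |b_k^*|^2$, use $|\mu_{ik}| \le 1/2$ together with the upper bound from (i), and sum the resulting geometric series to get $|b_i|^2 \le 2^{n-i-1} + 2^{n-2} \le 2^{n-1}$. Part (iii) is immediate from (ii) via Cauchy-Schwarz $|\langle b_i, b_j\rangle| \le |b_i|\cdot|b_j|$.

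For (iv) I would move into coordinates adapted to Gram-Schmidt. Let $e_k^* = b_k^*/|b_k^*|$ be the orthonormal basis, let $D = \mathrm{diag}(|b_1^*|,\dots,|b_n^*|)$, and let $U = (\mu_{ij})$ be the unit lower-triangular Gram-Schmidt matrix; then $UD$ converts $e^*$-coordinates into coordinates of $b_i$ in the ambient Euclidean space, an automorphism $\sigma$ acts by some orthogonal matrix $O$ in the $e^*$-basis, and the identity $\sigma(b_i) = \sum_j a_{ij}b_j$ becomes $A = UDO^T D^{-1}U^{-1}$. To bound a single entry I would split $a_{ij} = \sum_l (UDO^T)_{il}(D^{-1}U^{-1})_{lj}$ and apply Cauchy-Schwarz: the first vector is row $i$ of an orthogonal transformation applied to $b_i$, so its $\ell^2$-norm equals $|b_i| \le 2^{(n-1)/2}$ by (ii); the second is column $j$ of $D^{-1}U^{-1}$, whose squared norm $\sum_l (U^{-1})_{lj}^2/|b_l^*|^2$ I would bound termwise using Lemma \ref{matrixentrybd} for $(U^{-1})_{lj}$ and the lower bound from (i) for $1/|b_l^*|^2$, then sum the resulting geometric series.

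The main obstacle is the final numerical bookkeeping in (iv): the summed squared column norm comes out to a multiple of $2^{j-1}(9/2)^{n-j}$, and only because of the particular split into $(UDO^T)$ and $(D^{-1}U^{-1})$ do the excess powers of $2$ cancel against those coming from $|b_i|$ once one takes square roots, leaving the clean estimate $|a_{ij}| \le 2^{j-1}\cdot 3^{n-j} \le 3^{n-1}$. A cruder decomposition of the triple matrix product (say bounding operator norms of the three factors independently, or applying the entrywise bound of Lemma \ref{matrixentrybd} inside a plain triple sum rather than after a column-wise Cauchy-Schwarz) typically loses an extra polynomial or small exponential factor in $n$, so the grouping of terms is what makes the base-$3$ bound work out.
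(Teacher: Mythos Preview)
Your proposal is correct and follows essentially the same route as the paper. The only cosmetic difference is in part (iv): the paper introduces the dual basis $\{c_j\}$ and uses $a_{ij}=\langle c_j,\sigma(b_i)\rangle$ together with Cauchy--Schwarz to get $|a_{ij}|\le |b_i|\,|c_j|$, but since $c_j=\sum_l \nu_{lj}(b_l^{\ast})^{-1}$ one has $|c_j|^2=\sum_l \nu_{lj}^2/|b_l^{\ast}|^2$, which is exactly the squared norm of column $j$ of your $D^{-1}U^{-1}$---so your matrix decomposition $A=UDO\,D^{-1}U^{-1}$ and the paper's dual-basis argument are the same computation in different packaging.
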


\begin{proof}
It follows from Definition \ref{LLLreduceddefn} that for all 
$1 \le j \le i \le n$ we have
$|b_i^\ast|^2 \le 2^{j-i}|b_j^\ast|^2$, so
for all $i$ we have 
$$
2^{1-i}|b_1^\ast|^2 \le |b_i^\ast|^2 \le 2^{n-i}|b_n^\ast|^2.
$$

Since $L$ is integral we have 
$$
|b_1^\ast|^2 = |b_1|^2 = \langle b_1,b_1\rangle \ge 1,$$ 
so $|b_i^\ast|^2 \ge 2^{1-i}$.
Letting $L_i = \sum_{j=1}^i \Z b_j$, we have 
$$
|b_i^\ast| = \det(L_i)/\det(L_{i-1}).
$$
Since $L$ is integral and unimodular, we have
$$|b_n^\ast| = \det(L_n)/\det(L_{n-1}) = 1/\det(L_{n-1}) \le 1,$$
so $|b_i^\ast|^2 \le 2^{n-i}$, giving (i).

Since $\{ b_i^\ast \}$ is orthogonal we have
\begin{multline*}
|b_i|^2 = |b_i^\ast|^2 + \sum_{j=1}^{i-1} \mu_{ij}^2|b_j^\ast|^2
\le 2^{n-i} + \frac{1}{4}\sum_{j=1}^{i-1} 2^{n-j} \\ =
2^{n-i} + (2^{n-2} - 2^{n-i-1}) = 2^{n-2} + 2^{n-i-1} \le 2^{n-1},
\end{multline*}
giving (ii). Now (iii) follows by applying the
Cauchy-Schwarz inequality $|\langle b_i,b_j\rangle| \le |b_i||b_j|$
and (ii).

For (iv), define $\{ c_1,\ldots,c_n\}$ to be the basis of $L$ that is dual to
$\{ b_1,\ldots,b_n\}$, i.e., $\langle c_i,b_j\rangle = \delta_{ij}$
for all $i$ and $j$, where $\delta_{ij}$ is the Kronecker delta symbol.
Then $a_{ij} = \langle c_j,\sigma(b_i)\rangle$ so
\begin{equation}
\label{aijfirstbd}
|a_{ij}| \le |c_j||\sigma(b_i)| = |c_j||b_i|. 
\end{equation}

Define $\mu_{ii} = 1$ for all $i$ and $\mu_{ij}=0$ if $i<j$,
and let 
$$
M = (\mu_{ij})_{ij} \in \M(n,\R).
$$
Then $$(b_1 \, \, b_2 \, \cdots  \, b_n) = (b_1^\ast \, \, b_2^\ast \, \cdots \, b_n^\ast)M^t.$$ 
For $0\neq x\in L \otimes_\Z\R$, define $$x^{-1} = \frac{x}{\langle x,x\rangle}.$$
This inverse map is characterized by the properties that $\langle x,x^{-1}\rangle = 1$ and $\R x^{-1} = \R x$;
so $(x^{-1})^{-1} =x$.
Since the basis dual to $\{ b_i^\ast \}_i$ is $\{ (b_i^\ast)^{-1} \}_i$, and
$M$ gives the change of basis from $\{ b_i^\ast \}_i$ to $\{ b_i \}_i$, 
it follows that the matrix
$(M^t)^{-1}$ gives the change of basis from  $\{ (b_i^\ast)^{-1} \}_i$ to
$\{ c_i \}_i$. Thus,
$$
(c_1  \,  \, \cdots \,  \,  c_n) = 
((b_1^\ast)^{-1} \,  \, \cdots \,  \, (b_n^\ast)^{-1})M^{-1}.
$$

Letting $(\nu_{ij})_{ij} = M^{-1}$,  
by Lemma \ref{matrixentrybd} we have
$$
c_j = \sum_{i\ge j}(b_i^\ast)^{-1}\nu_{ij}
$$
with $\nu_{ii}=1$ and $|\nu_{ij}| \le 
\frac{1}{3}\left(\frac{3}{2}\right)^{i-j}$ if $i> j$.
By (i) we have 
$$
|(b_i^\ast)^{-1}|^2 \le 2^{i-1}.
$$
Thus,
\begin{align*}
|c_j|^2 & \le \sum_{i\ge j}2^{i-1}\nu_{ij}^2  \\
& \le 
2^{j-1} + \frac{1}{9}\sum_{i> j}2^{i-1}\left(\frac{9}{4}\right)^{i-j} \\
&  \le 
2^{j-1} + \frac{2^{j-1}}{9}\sum_{k=1}^{n-j}\left(\frac{9}{2}\right)^{k} \\ 
& = 
2^{j-1} + \frac{2^{j}}{63}\left[\left(\frac{9}{2}\right)^{n-j+1}-\frac{9}{2}\right] \\
& = 
\frac{2^{j-1}}{7}\left[\left(\frac{9}{2}\right)^{n-j} + 6\right] \\ 
& \le
\frac{1}{7}\left(\frac{9}{2}\right)^{n-1} + \frac{6}{7}\left(\frac{9}{2}\right)^{n-1} 
 =
\left(\frac{9}{2}\right)^{n-1}.
\end{align*}
Now by (ii) and \eqref{aijfirstbd} we have $|a_{ij}|^2 \le 9^{n-1}$, as desired.
\end{proof}

\begin{rem}
It is easier to get the weaker bound $|a_{ij}| \le 2^{n \choose {2}}$, as follows.
Write $b_j = b_j^\# + y$ with $y \in \sum_{i\neq j} \R b_i$
and $b_j^\#$ orthogonal to $\sum_{i\neq j} \R b_i$.
With $c_j$ as in the proof of Proposition \ref{LLLlem},
we have $c_j = (b_j^\#)^{-1}$, by the characterizations of $(b_j^\#)^{-1}$ and $c_j$.
Since 
$$
1 = \det(L) = \det(\sum_{i\neq j} \Z b_i)|b_j^\#|
$$
we have 
$$
|c_j| = |\det(\sum_{i\neq j} \Z b_i)| \le
\prod_{i \neq j}|b_i| \le 2^{(n-1)^2/2}
$$
by Hadamard's inequality and Proposition \ref{LLLlem}(ii).
By \eqref{aijfirstbd} and Proposition \ref{LLLlem}(ii) we have
$|a_{ij}| \le 2^{n \choose {2}}$. 
\end{rem}

\section{Short vectors in lattice cosets}
\label{latticecosetssect}
We show how to find the unique vector of length $1$ in a
suitable lattice coset, when such a vector exists.

\begin{prop}
\label{findingekmprop}
Suppose $L$ is an integral lattice, $3 \le m\in\Z$,
and $C\in L/mL$. Then the coset $C$ contains at most one element $x\in L$ with 
$\langle x,x\rangle=1$.
\end{prop}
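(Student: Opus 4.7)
My plan is to assume two length-$1$ vectors $x, y \in C$ exist, write $x = y + mz$ for some $z \in L$, and expand $\langle x, x \rangle$ to derive a strong constraint on $z$, forcing $z = 0$.

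First I would expand:
\begin{equation*}
1 = \langle x, x \rangle = \langle y, y \rangle + 2m\langle y, z \rangle + m^2 \langle z, z \rangle = 1 + 2m\langle y, z \rangle + m^2 \langle z, z \rangle,
\end{equation*}
which gives $m\langle z, z \rangle = -2\langle y, z \rangle$. Taking absolute values and applying Cauchy–Schwarz together with $|y| = 1$, I would get
\begin{equation*}
m|z|^2 = 2|\langle y, z \rangle| \le 2|y||z| = 2|z|.
\end{equation*}

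If $z \neq 0$, this yields $|z| \le 2/m$, hence $|z|^2 \le 4/m^2 \le 4/9 < 1$. But $\langle z, z \rangle$ is a non-negative integer since $L$ is integral, so $\langle z, z \rangle < 1$ forces $\langle z, z \rangle = 0$, and by positive definiteness $z = 0$, a contradiction. Therefore $z = 0$ and $x = y$, proving uniqueness.

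There is no real obstacle here; the only subtlety is that the argument uses the hypothesis $m \ge 3$ precisely to make $2/m < 1$ (so that the integer $|z|^2$ is squeezed to zero), and it uses integrality of $L$ to ensure $\langle z, z \rangle \in \Z$. The bound $m \ge 3$ is in fact sharp: for $m = 2$ the standard basis vector $e_1$ and $-e_1$ in $\Z^n$ lie in the same coset mod $2L$ and are both of length $1$.
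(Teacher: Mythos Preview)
Your proof is correct and follows essentially the same idea as the paper: write the difference as $mz$ with $z\in L$, bound $|z|$ by $2/m<1$, and use integrality to force $z=0$. The only cosmetic difference is that the paper reaches $m|z|\le 2$ in one line via the triangle inequality $|x-y|\le |x|+|y|=2$, whereas you expand $\langle x,x\rangle$ and apply Cauchy--Schwarz; the two routes are equivalent and yield the identical bound.
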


\begin{proof}
Suppose $x, y\in C$, with
$\langle x,x\rangle=\langle y,y\rangle=1$. 
Since $x,y\in C$, there exists
$w\in L$ such that $x-y=mw$.
Using the triangle inequality, we have
$$
m\langle w,w \rangle^{1/2} = \langle x-y, x-y \rangle^{1/2}
\le \langle x,x \rangle^{1/2} + \langle y,y \rangle^{1/2} 
= 1 + 1 = 2.
$$
Since $m \ge 3$ and $\langle w,w \rangle \in\Z_{\ge 0}$, we have $w=0$, and thus $y=x$.
\end{proof}

\begin{algorithm}
\label{findingekmalg2}
Given a rank $n$ integral lattice $L$, an integer $m$ such that
$m\ \ge 2^{n/2} +1$, and $C\in L/mL$, the algorithm computes
all $y\in C$ with $\langle y,y\rangle=1$.

\begin{enumerate}
\item
Compute an LLL-reduced basis for $mL$ and use it as in
\S 10 of \cite{HWLMSRI} to 
compute $y\in C$ such that 
$
\langle y,y \rangle \le (2^n -1)\langle x,x \rangle
$
for all $x\in C$, i.e., to find an approximate solution to the nearest vector problem.
\item
Compute $\langle y,y \rangle$. 
\item
If $\langle y,y \rangle =1$, output $y$.
\item 
If $\langle y,y \rangle \neq 1$, output
``there is no $y\in C$ with $\langle y,y\rangle=1$''.
\end{enumerate}
\end{algorithm}

\begin{prop}
\label{findingekmprop2}
Algorithm \ref{findingekmalg2} is a deterministic polynomial-time algorithm 
that, given a integral lattice $L$, an integer $m$ such that
$m \ge 2^{n/2} +1$ where $n=\rank(L)$, and $C\in L/mL$,
outputs all $y\in C$ with $\langle y,y\rangle=1$.
The number of such $y$ is $0$ or $1$.
\end{prop}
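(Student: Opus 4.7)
The plan is to verify three things: the output is either empty or a single vector, the decision between those two cases is made correctly, and the procedure runs in polynomial time.

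Uniqueness is immediate from Proposition \ref{findingekmprop}, provided $m \ge 3$. For $n \ge 1$ the hypothesis $m \ge 2^{n/2}+1$ forces $m \ge \lceil \sqrt{2}+1\rceil = 3$, and the case $n=0$ is vacuous since then $L=\{0\}$ contains no element of length $1$. So $C$ contains at most one $y$ with $\langle y,y\rangle=1$, proving the ``$0$ or $1$'' assertion.

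The heart of the matter is to show that the vector $y$ produced in step (i) is itself the length-$1$ element whenever one exists in $C$. By the standard LLL-based nearest vector routine (\S10 of \cite{HWLMSRI}), $y$ satisfies $\langle y,y\rangle \le (2^n-1)\langle x,x\rangle$ for every $x\in C$. Suppose some $x\in C$ has $\langle x,x\rangle=1$; then $|y|^2 \le 2^n-1$. If $y\ne x$, the difference $y-x$ is a nonzero element of $mL$, and since $L$ is integral every nonzero vector in $L$ has length $\ge 1$, so $|y-x|\ge m$. On the other hand, the triangle inequality yields
$$|y-x| \le |y|+|x| \le \sqrt{2^n-1}+1.$$
But $m \ge 2^{n/2}+1$ gives $m-1 \ge 2^{n/2} > \sqrt{2^n-1}$, so $m > \sqrt{2^n-1}+1$, contradicting the lower bound on $|y-x|$. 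Hence $y=x$, so $\langle y,y\rangle=1$ and step (iii) outputs the desired vector. Conversely, if no element of $C$ has length $1$, then in particular $\langle y,y\rangle\ne 1$ and step (iv) correctly reports that no such vector exists.

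For the runtime, computing an LLL-reduced basis of $mL$ is polynomial time by Remark \ref{LLLredrmk}, the approximate closest-vector computation of \S10 of \cite{HWLMSRI} is polynomial time, and the remaining inner-product evaluation and comparison are routine operations polynomial in the bit-size of the Gram matrix and of $m$. The only delicate point, and the main thing to verify carefully, is the quantitative comparison $m-1 \ge 2^{n/2} > \sqrt{2^n-1}$, which is what forces the approximate nearest vector to coincide with the exact length-$1$ solution whenever the latter exists.
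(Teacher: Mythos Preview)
Your proof is correct and follows essentially the same approach as the paper's: both use the triangle inequality together with the bound $|y|\le\sqrt{2^n-1}$ to show that if a length-$1$ vector $x\in C$ exists then $|x-y|<m$, forcing $x=y$ by integrality of $L$. Your explicit separation of the uniqueness claim (via Proposition~\ref{findingekmprop}) and your discussion of the runtime are minor additions not spelled out in the paper, but the core argument is the same.
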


\begin{proof}
Suppose $x\in C$ with $\langle x,x \rangle=1$.
Since $x,y\in C$, there exists
$w\in L$ such that $x-y=mw$. Using the triangle inequality, we have
$$
m\langle w,w \rangle^{1/2} = \langle x-y, x-y \rangle^{1/2}
\le \langle x,x \rangle^{1/2} + \langle y,y \rangle^{1/2} 
< (1+2^{n/2})\langle x,x \rangle^{1/2} \le m,
$$
so $\langle w,w \rangle^{1/2} < 1$. 
Since $\langle w,w \rangle \in\Z_{\ge 0}$, we have $w=0$, and thus $y=x$.
If $\langle y,y \rangle \neq 1$, 
there is no $x\in C$ with $\langle x,x\rangle=1$.
\end{proof}

\section{$G$-lattices}
\label{backgroundsect2}

We introduce $G$-lattices and $G$-isomorphisms.
From now on, suppose that $G$ is a finite abelian group equipped with
a fixed element $u$ of order $2$, and
that $n = {\# G}/{2} \in\Z.$

\begin{defn}
\label{Sdef}
Let $S$ be a set of coset representatives of $G/\langle u\rangle$
(i.e., $\# S=n$ and $G = S \sqcup uS$), and 
for simplicity take $S$ so that $1\in S$.
\end{defn}

\begin{defn}
A {\bf $G$-lattice} is a lattice $L$ together with a group homomorphism
$f : G \to \Aut(L)$ such that  $f(u)= -1$.
For each $\sigma \in G$ and $x\in L$, define $\sigma x \in L$ by 
$\sigma x = f(\sigma)(x)$.
\end{defn}

The abelian group $G$ is specified by a multiplication table. The $G$-lattice $L$ is specified as a lattice along with, for each $\sigma \in G$, 
the matrix describing the action of $\sigma$ on $L$.

\begin{defn}
If $L$ and $M$ are $G$-lattices, then a {\bf $G$-isomorphism} 
is an isomorphism $\varphi : L \isom M$ of lattices
that respects the $G$-actions, i.e., 
$\varphi(\sigma x) = \sigma \varphi(x)$
for all $x\in L$ and $\sigma\in G$.
If such an isomorphism exists,
we say that $L$ and $M$ are {\bf $G$-isomorphic}, or isomorphic as $\G$-lattices.
\end{defn}

\section{The modified group ring $\Z\langle G\rangle$}
\label{modgpringsect}

We define a modified group ring $A\langle G\rangle$ whenever
$A$ is a commutative ring. We will usually take $A=\Z$, but
will also take $A=\Z/m\Z$ and $\Q$ and $\C$.

If $H$ is a group and $A$ is a commutative ring,
the group ring $A[H]$ is the set of formal sums
$\sum_{\sigma\in H} a_\sigma \sigma$ with $a_\sigma \in A$,
with addition defined by
$$
\sum_{\sigma\in H} a_\sigma \sigma + \sum_{\sigma\in H} b_\sigma \sigma
= \sum_{\sigma\in H} (a_\sigma + b_\sigma) \sigma
$$
and multiplication defined by
$$
(\sum_{\sigma\in H} a_\sigma \sigma)(\sum_{\tau\in H} b_\tau \tau)
= \sum_{\rho\in H} (\sum_{\sigma\tau =\rho} a_\sigma b_\tau) \rho.
$$
For example, if $H$ is a cyclic group of order $m$ 
and $h$ is a generator, then as rings we have 
$$
 \Z[X]/(X^m-1)  \cong  \Z[H] 
$$
via the map
$$
\sum_{i=0}^{m-1}a_iX^i  \mapsto  \sum_{i=0}^{m-1}a_ih^i.
$$

\begin{defn}
\label{modgpringdef}
If $A$ is a commutative ring, then
writing $1$ for the identity element of the group $G$,
we define the {\bf modified group ring}
$$
A\langle G\rangle = A[G]/(u+1).
$$
\end{defn}

Every $G$-lattice $L$ is a $\Z\langle G\rangle$-module, where one uses 
the $G$-action on $L$ to define $ax$ whenever $x\in L$ and 
$a\in \Z\langle G\rangle$.
This is why we consider $A\langle G\rangle$ rather than the standard group ring $A[G]$.
Considering groups equipped with an element of order $2$ 
allows us to include the cyclotomic rings $\Z[X]/(X^{2^k}+1)$
in our theory.

\begin{defn}
\label{tdef}
Define the {\bf scaled trace function} $t : A\langle G\rangle \to A$ by
$$
t(\sum_{\sigma\in G} a_\sigma \sigma) = a_{1}-a_u.
$$
\end{defn}
This is well defined since 
the restriction of $t$ to $(u+1)A[G]$ is $0$.
The map $t$ is the 
$A$-linear map 
satisfying $t(1)=1$,  $t(u)=-1$, and 
$t(\sigma)=0$ if $\sigma\in G$ and $\sigma\neq 1,u$.

\begin{defn}
\label{bardef}
For $a = \sum_{\sigma\in G} a_\sigma \sigma \in A\langle G\rangle$, 
define 
$$
\overline{a} = \sum_{\sigma\in G} a_\sigma \sigma^{-1}.
$$
\end{defn}

The map $a \mapsto \overline{a}$
is a ring automorphism of $A\langle G\rangle$. 
Since
$
\overline{\overline{a}} = a,$
it is an involution.
(An involution is a ring automorphism that is its own inverse.)
One can think of this map as mimicking complex conjugation
(cf.\ Lemma \ref{psimisclem}(i)).

\begin{rem}
If $L$ is a $G$-lattice and $x,y\in L$,
then 
$$
\langle \sigma x,\sigma y \rangle = \langle x,y\rangle
$$ 
for all $\sigma\in G$
by Definition \ref{latisomdefn}. 
It follows that 
$$
\langle a x, y \rangle = \langle x,\overline{a} y\rangle
$$
for all $a\in \Z\langle G\rangle$.
This ``hermitian'' property of the inner product is the main reason
for introducing the involution. 
\end{rem}

\begin{defn}
For $x,y\in \Z\langle G\rangle$ define
$\langle x,y\rangle_{\Z\langle G\rangle} = t(x\overline{y}).$
\end{defn}

Recall that $n = {\#G}/{2}$ and 
$S$ is a set of coset representatives of $G/\langle u\rangle$.
The following two results are straightforward.

\begin{lem}
\label{miscAlem}
Suppose $A$ is a commutative ring. Then:
\begin{enumerate}
\item
$A\langle G\rangle = \{ \sum_{\sigma \in S} a_\sigma \sigma : a_\sigma\in A \} =\bigoplus_{\sigma\in S}A\sigma$;
\item 
if $a = \sum_{\sigma \in S} a_\sigma \sigma \in A\langle G\rangle$,
then 
\begin{enumerate}
\item[{\rm{(a)}}]
$t(a) = a_1$, 
\item[{\rm{(b)}}]
$t(\bar{a}) = t(a)$, 
\item[{\rm{(c)}}]
$t(a\bar{a}) =  \sum_{\sigma \in S} a_\sigma^2$,
\item[{\rm{(d)}}]
$a =  \sum_{\sigma \in S} t(\sigma^{-1}a) \sigma$,
\item[{\rm{(e)}}]
if $t(ab)=0$ for all $b \in A\langle G\rangle$, then $a=0$.
\end{enumerate}
\end{enumerate}
\end{lem}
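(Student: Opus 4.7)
The plan is to establish (i) first, which gives $A\langle G\rangle$ the structure of a free $A$-module on the set $S$, and then to derive (ii)(a)--(e) by direct computation in the $S$-basis, repeatedly exploiting the disjointness $S \cap uS = \emptyset$ from Definition~\ref{Sdef}.

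For (i), I would use the $A$-module decomposition $A[G] = \bigoplus_{\sigma\in S} A\sigma \;\oplus\; \bigoplus_{\sigma\in S} A u\sigma$ coming from $G = S \sqcup uS$. The ideal $(u+1)A[G]$ is the $A$-span of $\{\sigma + u\sigma : \sigma \in S\}$, since $(u+1)\sigma = (u+1)(u\sigma)$. Define $\pi : A[G] \to \bigoplus_{\sigma\in S} A\sigma$ by $\sigma \mapsto \sigma$ and $u\sigma \mapsto -\sigma$ for $\sigma \in S$. Then $\pi$ is $A$-linear, kills each generator $\sigma + u\sigma$, and conversely its kernel is contained in $(u+1)A[G]$: a kernel element $\sum_\sigma(a_\sigma \sigma + b_\sigma u\sigma)$ must satisfy $a_\sigma = b_\sigma$, hence equals $\sum_\sigma a_\sigma(\sigma+u\sigma)$. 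So $\pi$ induces an $A$-module isomorphism $A\langle G\rangle \isom \bigoplus_{\sigma\in S} A\sigma$, which is exactly (i).

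For (ii), the scaled trace $t$, transported through the isomorphism of (i), simply reads off the coefficient of $1$ in the $S$-basis: an element $\sum_{\sigma\in S} a_\sigma\sigma$ lifts to the same expression in $A[G]$, whose coefficients on $1$ and on $u$ are $a_1$ and $0$, proving (a). For (b), (c), and (d) I would expand $\bar a$, $a\bar a$, and $\sigma^{-1}a$ and track which terms contribute to the coefficient of $1$ after reducing $u\rho\equiv -\rho$. A term indexed by $\rho\in G$ contributes only when $\rho=1$ (with sign $+$) or $\rho=u$ (with sign $-$). For $\bar a$ the relevant $\sigma\in S$ satisfy $\sigma^{-1}\in\{1,u\}$; since $u\in uS$, only $\sigma=1$ qualifies, giving $t(\bar a)=a_1=t(a)$. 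For $a\bar a=\sum_{\sigma,\tau\in S}a_\sigma a_\tau\,\sigma\tau^{-1}$, the case $\sigma\tau^{-1}=u$ would force $\sigma=u\tau\in uS$, contradicting $\sigma\in S$, so only the diagonal $\sigma=\tau$ survives, giving (c). The same dichotomy proves $t(\sigma^{-1}a)=a_\sigma$, and substituting this into the normal form from (i) yields (d). Finally, (e) follows from (d) on taking $b=\sigma^{-1}$ for each $\sigma\in S$, using commutativity of $A\langle G\rangle$ (from $G$ being abelian) to identify $t(a\sigma^{-1})$ with $t(\sigma^{-1}a)$.

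I do not anticipate any serious obstacle; the argument is pure bookkeeping. The one point that requires consistent care is invoking the disjointness $S\cap uS=\emptyset$ to rule out spurious $\rho=u$ contributions whenever products are rewritten in the $S$-basis. Once that disjointness is applied uniformly, every assertion reduces to reading off a single coefficient in a sum indexed by $G$.
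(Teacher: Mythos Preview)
Your proposal is correct and is exactly the kind of direct bookkeeping the authors have in mind: the paper does not write out a proof at all, declaring the lemma ``straightforward,'' and your expansion in the $S$-basis together with the systematic use of $S\cap uS=\emptyset$ is the natural way to carry this out.
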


\begin{prop}
\begin{enumerate}[leftmargin=*]
\item
The additive group of the ring 
$\Z\langle G\rangle$ is a $G$-lattice of rank $n$,
with lattice structure defined
by $\langle \,\cdot\, , \,\cdot\, \rangle_{\Z\langle G\rangle}$ and $G$-action defined by 
$\sigma x = \sigma x$ where the right hand side is
ring multiplication in $\Z\langle G\rangle$.
\item  
As lattices, we have $\Z\langle G\rangle \cong \Z^n$.
\end{enumerate}
\end{prop}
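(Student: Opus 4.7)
The plan is to verify the axioms of a $G$-lattice directly from the definitions, using Lemma~\ref{miscAlem} as the main computational input, and then read off part~(ii) by exhibiting an explicit orthonormal basis.

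For part~(i), I would first observe that Lemma~\ref{miscAlem}(i) (applied to $A=\Z$) shows the additive group of $\Z\langle G\rangle$ is free abelian of rank $n$, with basis $\{\sigma : \sigma\in S\}$. The form $\langle x,y\rangle_{\Z\langle G\rangle}=t(x\bar y)$ takes values in $\Z$ (since $t$ does) and is bilinear because $t$ is $\Z$-linear, ring multiplication is bilinear, and $y\mapsto\bar y$ is additive. Symmetry follows from $\overline{x\bar y}=y\bar x$ together with Lemma~\ref{miscAlem}(ii)(b), which gives $t(x\bar y)=t(\overline{x\bar y})=t(y\bar x)$. Positive definiteness is the key point, and it is immediate from Lemma~\ref{miscAlem}(ii)(c): writing $a=\sum_{\sigma\in S}a_\sigma\sigma$, we have $\langle a,a\rangle_{\Z\langle G\rangle}=t(a\bar a)=\sum_{\sigma\in S}a_\sigma^2$, which is non-negative and vanishes only when every $a_\sigma$ is $0$, i.e., when $a=0$.

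Next I would put the $G$-action on $\Z\langle G\rangle$: each $\sigma\in G$ is a unit in $\Z\langle G\rangle$ (with inverse $\sigma^{-1}$), so multiplication by $\sigma$ is a group automorphism of the additive group, and $\sigma\mapsto(\text{mult.\ by }\sigma)$ is a group homomorphism $G\to\Aut_{\Z}(\Z\langle G\rangle)$. To see it preserves the inner product, use $\bar\sigma=\sigma^{-1}$ together with commutativity of $G$:
\[
\langle \sigma x,\sigma y\rangle_{\Z\langle G\rangle}
= t\bigl(\sigma x\,\overline{\sigma y}\bigr)
= t\bigl(\sigma\sigma^{-1}x\bar y\bigr)
= t(x\bar y)
= \langle x,y\rangle_{\Z\langle G\rangle}.
\]
Finally, since $\Z\langle G\rangle=\Z[G]/(u+1)$, the element $u$ equals $-1$ in $\Z\langle G\rangle$, so multiplication by $u$ is multiplication by $-1$; this verifies the $G$-lattice axiom and completes~(i).

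Part~(ii) falls out of the same computation: with respect to the basis $\{\sigma:\sigma\in S\}$ from Lemma~\ref{miscAlem}(i), Lemma~\ref{miscAlem}(ii)(c) together with the polarization identity (or a direct expansion of $\langle \sigma,\tau\rangle_{\Z\langle G\rangle}=t(\sigma\tau^{-1})$ for $\sigma,\tau\in S$) shows that the Gram matrix is the $n\times n$ identity, so mapping $\sigma\in S$ to the $\sigma$-th standard basis vector defines a lattice isomorphism $\Z\langle G\rangle\isom\Z^n$.

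Nothing in this argument is hard; the only mild care is in the orthogonality step of~(ii), where one must check that for distinct $\sigma,\tau\in S$ the element $\sigma\tau^{-1}\in G$ lies neither in $\{1\}$ nor in $\{u\}$ (so $t(\sigma\tau^{-1})=0$), which is forced by the choice of $S$ as a set of coset representatives of $G/\langle u\rangle$.
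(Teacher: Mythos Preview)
Your proof is correct and follows exactly the approach the paper intends: the paper declares this result ``straightforward'' (i.e., gives no proof) and immediately afterward remarks that $S$ is an orthonormal basis for $\Z\langle G\rangle$, which is precisely the content of your argument for~(ii). Your verification of the $G$-lattice axioms via Lemma~\ref{miscAlem} fills in the omitted details faithfully.
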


\begin{defn}
We call $\Z\langle G\rangle$ the {\bf standard $G$-lattice}.
\end{defn}

The set $S$ of coset representatives for $G/\langle u\rangle$
is an orthonormal basis for the standard $G$-lattice.

\begin{ex}
\label{Gcyclicmodex}
Suppose $G=H \times \langle u\rangle$ with $H \cong \Z/n\Z$. 
Then
$$\Z\langle G\rangle \cong \Z[H] \cong \Z[X]/(X^n-1)$$
as rings and as lattices. 
When $n$ is odd (so $G$ is cyclic), then, 
sending $X$ to $-X$, we have
$$\Z\langle G\rangle \cong \Z[X]/(X^n-1) \cong \Z[X]/(X^n+1).$$
\end{ex}

\begin{ex}
If $G$ is cyclic, then 
$\Z\langle G\rangle \cong \Z[X]/(X^n+1)$, identifying $X$ with
a generator of $G$.
If $G$ is cyclic of order $2^r$, then 
$$\Z\langle G\rangle \cong \Z[X]/(X^{2^{r-1}}+1) \cong \Z[\zeta_{2^r}],$$ 
where $\zeta_{2^r}$ is
a primitive $2^r$-th root of unity.
\end{ex}

\begin{rem}
The ring $\Z\langle G\rangle$ is an integral domain
if and only if
$G$ is cyclic and $n$ is a power of $2$ (including $2^0=1$).
(If $g\in G$ is an element whose order is odd or $2$,
and $g \not\in \{ 1,u\}$, then $g-1$ is a zero divisor.)
\end{rem}

\section{The modified group ring over fields}
\label{recovergssect}
The main result of this section is Lemma \ref{psimisclem}, which we
will use repeatedly in the rest of the paper.
Recall that $G$ is a finite abelian group of order $2n$ equipped
with an element $u$ of order $2$.
If $R$ is a commutative ring, then a commutative $R$-algebra is a commutative ring
$A$ equipped with a ring homomorphism from $R$ to $A$.

If $K$ is a subfield of $\C$ and $E$ is a commutative $K$-algebra
with $\dim_K(E) < \infty$, let $\Phi_E$ denote the set of $K$-algebra
homomorphisms from $E$ to $\C$. Then $\C^{\Phi_E}$ is a $\C$-algebra
with coordinate-wise operations.
The next result is not only useful for studying modified group rings,
but also comes in handy in Proposition \ref{rootsofunitythm} below.

\begin{lem}
\label{KalglemBourb}
Suppose $K$ is a subfield of $\C$ and $E$ is a commutative $K$-algebra
with $\dim_K(E) < \infty$. Assume $\#\Phi_E = \dim_K(E)$. Then:
\begin{enumerate}
\item 
identifying $\Phi_E$ with 
$$
\{ \text{$\C$-algebra homomorphisms $E_\C = \C\otimes_K E \to \C$} \},
$$
 the  map
$E_\C \to \C^{\Phi_E}$, $x\mapsto (\varphi(x))_{\varphi\in\Phi_E}$ 
is an isomorphism of $\C$-algebras;
\item
$\bigcap_{\varphi\in\Phi_E}\ker(\varphi) = 0$ in $E$;
\item
there is a finite collection $\{ K_j\}_{j=1}^d$ of finite extension fields of $K$ 
such that 
$$
E \cong K_1\times\cdots\times K_d
$$ 
as $K$-algebras.
\end{enumerate}
\end{lem}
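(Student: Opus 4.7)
The plan is to reduce everything to the structure theorem for finite-dimensional commutative algebras over a field, and then use a dimension count forced by the hypothesis $\#\Phi_E = \dim_K E$ to conclude that $E$ is semisimple (and even separable as a $K$-algebra), from which all three statements fall out.

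First I would invoke the standard fact that a finite-dimensional commutative $K$-algebra is Artinian and therefore decomposes canonically as a product of local Artinian $K$-algebras, $E \cong \prod_{i=1}^d E_i$, with each $E_i$ having a unique maximal ideal $\m_i$ and residue field $K_i := E_i/\m_i$ finite over $K$. Any $K$-algebra homomorphism $E \to \C$ kills all but one factor $E_i$ (since the $\C$-image is an integral domain and the idempotents must go to $0$ or $1$), and on that factor it kills $\m_i$, hence factors through $K_i$. Because $\fchar K = 0$, every finite extension $K_i/K$ is separable, so $\#\Hom_K(K_i,\C) = [K_i:K]$. Combined with the obvious inequality $[K_i:K] = \dim_K K_i \le \dim_K E_i$ (equality iff $\m_i = 0$, i.e.\ iff $E_i$ is a field), this gives
\[
\#\Phi_E \;=\; \sum_{i=1}^d [K_i:K] \;\le\; \sum_{i=1}^d \dim_K E_i \;=\; \dim_K E.
\]
The hypothesis forces equality in every term, so $\m_i=0$ and $E_i = K_i$ for each $i$. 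This proves (iii).

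Next, for (i), I would extend scalars: from (iii), $E_\C = \C\otimes_K E \cong \prod_{i=1}^d \C\otimes_K K_i$, and for each separable finite extension $K_i/K$ the primitive element theorem plus the Chinese remainder theorem give $\C\otimes_K K_i \cong \C^{\Hom_K(K_i,\C)}$, where the $\C$-algebra isomorphism sends $1\otimes x$ to $(\psi(x))_\psi$. Assembling,
\[
E_\C \;\cong\; \prod_{i=1}^d \C^{\Hom_K(K_i,\C)} \;\cong\; \C^{\Phi_E},
\]
and tracing through the identifications shows this is exactly the map $x \mapsto (\varphi(x))_{\varphi\in\Phi_E}$ described in (i).

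Finally, (ii) follows by combining (i) with the injectivity of the canonical map $E \to E_\C$. That injectivity holds because $E$ is a $K$-vector space, hence flat, so tensoring the injection $K\hookrightarrow \C$ yields an injection $E \hookrightarrow E_\C$; alternatively, choose a $K$-basis of $E$, which remains a $\C$-basis of $E_\C$. Composing with the isomorphism from (i), the map $E \to \C^{\Phi_E}$, $x\mapsto(\varphi(x))_\varphi$, is injective, and its kernel is by definition $\bigcap_{\varphi\in\Phi_E}\ker(\varphi)$. I expect the only genuinely delicate step to be the counting argument that forces each $E_i$ to equal its residue field; everything else is either the structure theorem or a routine base-change computation for separable extensions.
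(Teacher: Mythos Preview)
Your argument is correct. The counting step is sound: since $K\subset\C$ has characteristic zero each residue field $K_i$ is separable over $K$, so $\#\Hom_K(K_i,\C)=[K_i:K]$, and the chain of inequalities collapses exactly as you say. The derivation of (i) from (iii) by base change and of (ii) from (i) is routine and carried out cleanly.

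However, your route differs from the paper's. The paper proves (i) \emph{first}, directly from Dedekind's linear independence of characters (quoted from Bourbaki, Alg.~V.6.3): the elements of $\Phi_E$ are automatically $\C$-linearly independent in $\Hom_\C(E_\C,\C)$, and since by hypothesis there are $\dim_\C E_\C$ of them they form a basis of the dual, whence the evaluation map $E_\C\to\C^{\Phi_E}$ is a $\C$-linear isomorphism (and visibly a ring map). Part (ii) is then immediate, and (iii) is obtained by invoking Bourbaki's characterisation of \'etale $K$-algebras. Your approach instead establishes (iii) first via the Artinian local decomposition and a dimension count, then deduces (i) by tensoring each field factor up to $\C$. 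What your approach buys is self-containment: you never appeal to a black-box notion of \'etale algebra, and every step is explicit. What the paper's approach buys is brevity and a cleaner logical order matching the statement, at the cost of citing Bourbaki for the structure theory.
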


\begin{proof}
By the Corollaire to Proposition 1 in V.6.3 of \cite{BourbakiAlgII},
the set $\Phi_E$ is a $\C$-basis for $\Hom_K(E,\C) = \Hom_\C(E_\C,\C)$,
so the $\C$-algebra homomorphism in (i) is an isomorphism. Part (ii)
follows immediately from (i).

By Proposition 2 in V.6.3 of \cite{BourbakiAlgII}, the $K$-algebra $E$ is 
what Bourbaki calls an \'etale $K$-algebra, and (iii) then follows from
Theorem 4 in V.6.7 of \cite{BourbakiAlgII}.
\end{proof}

\begin{defn}
\label{Psidef}
Let $\Psi$ denote the set of ring homomorphisms from
$\Q\langle G\rangle$ to $\C$.
We identify $\Psi$ with the set
of $K$-algebra homomorphisms from $K\langle G\rangle$ to $\C$,
where $K$ is any subfield of $\C$.
The set $\Psi$ can also be identified with the set of
group homomorphisms $\psi : G \to\C^\ast$ such that $\psi(u)=-1$.
\end{defn}

We have $\#\Psi = n$, since
$\#\Hom(G,\C^\ast)=\# G=2n$ and the restriction map
$\Hom(G,\C^\ast)\to \Hom(\langle u\rangle,\C^\ast)$ is surjective.
This allows us to apply Lemma \ref{KalglemBourb} with $E=K\langle G\rangle$.
If $a\in \C\langle G\rangle$,
then $a$ acts on the $\C$-vector space $\C\langle G\rangle$
by multiplication, and for $\psi\in\Psi$ the $\psi(a)$ are the eigenvalues
for this linear transformation.
Lemma \ref{psimisclem}(ii) justifies thinking of the map
$t$ of Definition \ref{tdef} as a scaled trace function.

\begin{lem}
\label{psimisclem}
\begin{enumerate}[leftmargin=*]
\item 
If $\psi\in\Psi$, then 
$\overline{\psi(\alpha)} = \psi(\bar{\alpha})$
for all $\alpha\in \R\langle G\rangle$.
\item
If $a\in \C\langle G\rangle$,
 then $t(a) = \frac{1}{n}\sum_{\psi\in\Psi} \psi(a)$.
\item
If $K$ is a subfield of $\C$, then
$\bigcap_{\psi\in\Psi}\ker(\psi) = 0$ in $K\langle G\rangle$.
\item
The  map
$\C\langle G\rangle \to \C^{\Psi}$, $x\mapsto (\psi(x))_{\psi\in\Psi}$ 
is an isomorphism of $\C$-algebras.
\item
There are number fields $K_1,\ldots,K_d$ such that
$$
\Q\langle G\rangle \cong K_1\times\cdots\times K_d
$$ 
as $\Q$-algebras.
\item
Suppose 
$K$ is a subfield of $\C$ and $\alpha\in K\langle G\rangle$.
Then $\alpha\in K\langle G\rangle^\ast$ if and only if
$\psi(\alpha) \neq 0$ for all $\psi\in\Psi$.
\item
If $z\in\R\langle G\rangle$ is such that $\psi(z)\in\R$ for all $\psi\in\Psi$
and $\sum_{\psi\in\Psi}\psi(x\overline{x}z) \ge 0$ for all $x\in \R\langle G\rangle$,
then $\psi(z)\ge 0$ for all $\psi\in\Psi$.
\end{enumerate}
\end{lem}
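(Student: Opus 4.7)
The plan is to prove (i)--(vii) in order, since each part uses only earlier ones together with Lemma~\ref{KalglemBourb} and character orthogonality.

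For (i), each $\sigma\in G$ has finite order, so $\psi(\sigma)\in\C^\ast$ is a root of unity and $\overline{\psi(\sigma)} = \psi(\sigma)^{-1} = \psi(\sigma^{-1}) = \psi(\bar\sigma)$; the case of general $\alpha\in\R\langle G\rangle$ then follows by $\R$-linearity. For (ii), both sides are $\C$-linear in $a$, so it suffices to verify the equality on $a=\sigma$ for $\sigma\in G$. Writing $\Psi = \{\chi\in\Hom(G,\C^\ast) : \chi(u)=-1\}$ and using the projector $\tfrac{1}{2}(1-\chi(u))$ onto $\Psi$ inside $\Hom(G,\C^\ast)$, I compute $\sum_{\psi\in\Psi}\psi(\sigma) = \tfrac{1}{2}\bigl(\sum_{\chi}\chi(\sigma)-\sum_{\chi}\chi(u\sigma)\bigr)$, where the inner sums run over $\Hom(G,\C^\ast)$. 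The orthogonality relation $\sum_\chi\chi(g) = 2n\,\delta_{g,1}$ then yields $n(\delta_{\sigma,1}-\delta_{\sigma,u}) = n\cdot t(\sigma)$, as required.

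Parts (iii), (iv), and (v) are direct applications of Lemma~\ref{KalglemBourb} to $E = K\langle G\rangle$. The hypothesis $\#\Phi_E = \dim_K E$ is satisfied because $\dim_K K\langle G\rangle = n$ by Lemma~\ref{miscAlem}(i), while $\#\Psi = n$ was established immediately before the lemma, and Definition~\ref{Psidef} identifies $\Psi$ with $\Phi_E$. For (vi), applying (v) over $K$ gives a decomposition $K\langle G\rangle \cong K_1\times\cdots\times K_d$ into fields, so $\alpha$ is a unit iff its image in each $K_j$ is nonzero. Under the isomorphism of (iv), each $K_j$ contributes the $[K_j:K]$ homomorphisms arising from its embeddings into $\C$, and these partition $\Psi$; since every embedding $K_j\hookrightarrow\C$ is injective, the image of $\alpha$ in $K_j$ vanishes iff $\psi(\alpha)=0$ for any (equivalently, all) corresponding $\psi$. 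Combining these observations gives (vi).

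The main work is in (vii). By (i), $\psi(x\bar x) = \psi(x)\overline{\psi(x)} = |\psi(x)|^2$, so the hypothesis reads $\sum_{\psi} |\psi(x)|^2\psi(z)\ge 0$ for all $x\in\R\langle G\rangle$. Assume for contradiction that $\psi_0(z)<0$. Via the isomorphism (iv) let $e_\psi\in\C\langle G\rangle$ denote the primitive idempotent characterised by $\psi'(e_\psi)=\delta_{\psi,\psi'}$. The key observation is that complex conjugation of coefficients on $\C\langle G\rangle$, whose fixed subring is $\R\langle G\rangle$, transports under (iv) to the involution $(c_\psi)_\psi\mapsto(\overline{c_{\bar\psi}})_\psi$ on $\C^{\Psi}$, where $\bar\psi(g):=\overline{\psi(g)}$ is again an element of $\Psi$. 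Hence $e_{\psi_0}+e_{\bar\psi_0}$, interpreted as $e_{\psi_0}$ when $\bar\psi_0=\psi_0$, is conjugation-invariant and therefore lies in $\R\langle G\rangle$. Taking this element as $x$ collapses the sum to $\psi_0(z)$ or $\psi_0(z)+\bar\psi_0(z)=2\psi_0(z)$ (the second equality using $\psi_0(z)\in\R$), both strictly negative, contradicting the hypothesis. The main obstacle is exactly (vii): every other part reduces to linearity, character orthogonality, or a direct invocation of Lemma~\ref{KalglemBourb}, whereas in (vii) the natural spectral witness $e_{\psi_0}$ need not be real, and one has to average it with its complex-conjugate partner $e_{\bar\psi_0}$ to land in $\R\langle G\rangle$.
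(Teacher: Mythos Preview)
Your proof is correct, and for parts (i)--(v) and (vii) it follows essentially the same route as the paper: character values are roots of unity for (i); reduction to group elements plus orthogonality for (ii); direct invocation of Lemma~\ref{KalglemBourb} for (iii)--(v); and for (vii) the choice of $x$ as a real primitive idempotent. The paper phrases the last step via the explicit decomposition $\R\langle G\rangle\cong\R^r\times\C^s$ and takes $x$ to be a coordinate vector there, but your averaged idempotent $e_{\psi_0}+e_{\bar\psi_0}$ is exactly that same element, so the two arguments coincide.

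The one genuine deviation is (vi). The paper first settles $K=\C$ using (iv), and then for an arbitrary subfield $K\subset\C$ argues that if every $\psi(\alpha)\ne 0$ then multiplication by $\alpha$ on the finite-dimensional $K$-vector space $K\langle G\rangle$ is injective, hence surjective, so $\alpha\in K\langle G\rangle^\ast$. You instead invoke a field decomposition $K\langle G\rangle\cong\prod_j K_j$ and use injectivity of the embeddings $K_j\hookrightarrow\C$. (Note that what you cite as ``(v) over $K$'' is really Lemma~\ref{KalglemBourb}(iii) applied to $E=K\langle G\rangle$; part (v) as stated is only for $\Q$.) Both arguments are short and standard: the paper's avoids re-decomposing for each $K$ and is marginally more elementary, while yours makes the structural reason---units in a product of fields are tuples of nonzero elements---more transparent.
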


\begin{proof}
For (i), since $G$ is finite, $\psi(\sigma)$ is a root of unity for all
$\sigma\in G$. Thus, 
$$
\overline{\psi(\sigma)} = \psi({\sigma})^{-1} = \psi({\sigma}^{-1}) = \psi(\bar{\sigma}).
$$
The $\R$-linearity of $\psi$ and of $\Aut(\C/\R)$ now imply (i).

We have 
$$
\frac{1}{n}\sum_{\psi\in\Psi} \psi(1) = 1 = t(1),
$$ 
and
$$
\frac{1}{n}\sum_{\psi\in\Psi} \psi(u) = -1 = t(u),
$$
and for each $\sigma \not\in \langle u\rangle$ we have
$$
\sum_{\psi\in\Psi} \psi(\sigma) = 
-\sum_{{\psi\in\Hom(G,\C^\ast)} \atop {\psi(u)=1}} \psi(\sigma) = 
-\sum_{\psi\in\Hom(G/\langle u\rangle,\C^\ast)} \psi(\sigma \text{ mod $\langle u\rangle ) = 0 = nt(\sigma)$}.
$$ 
Extending $\C$-linearly gives (ii).

If $K$ is a subfield of $\C$,
then $\#\Psi=n=\dim_K K\langle G\rangle$.
Thus we can apply Lemma \ref{KalglemBourb}, giving (iii), (iv), and (v).

By (iv) we have 
$\C\langle G\rangle^\ast \isom (\C^\ast)^{\Psi}$.
This gives (vi) when $K=\C$.
If $K$ is a subfield of $\C$ and
$x\in K\langle G\rangle \cap \C\langle G\rangle^\ast$
then multiplication by $x$ is an injective map from
$K\langle G\rangle$ to itself, so is also surjective, so
$x\in K\langle G\rangle^\ast$.
Thus  
$$
K\langle G\rangle^\ast = K\langle G\rangle \cap \C\langle G\rangle^\ast,
$$
and (vi) follows.

For (vii), applying Lemma \ref{KalglemBourb}(iii) with $K=\R$ gives
an $\R$-algebra isomorphism 
$$
\R\langle G\rangle \isom \R^{r} \times \C^{s}.
$$
The set $\Psi=\{\psi_j\}_{j=1}^{r+2s}$ consists of
the $r$ projection maps 
$\psi_j: \R\langle G\rangle \to \R\subset \C$ for $1 < j\le r$,
along with the $s$ projection maps 
$\psi_j: \R\langle G\rangle \to  \C$  
and their complex conjugates
$\psi_{s+j}=\overline{\psi_{j}}$ for $r+1\le j\le r+s$.
By (i), if $$x = (x_1,\ldots,x_r,y_1,\ldots,y_s)\in \R^{r} \times \C^{s},$$
then 
$$
\overline{x} = (x_1,\ldots,x_r,\overline{y_1},\ldots,\overline{y_s}).
$$
Taking  
$x$ to have $1$ in the $j$-th position and $0$ everywhere else, we have
$$
0 \le \sum_{\psi\in\Psi}\psi(x\overline{x}z) = 
\begin{cases}
\phantom{2}\psi_j(z) & \text{if $1\le j\le r$} \\
2\psi_j(z) & \text{otherwise,}
\end{cases}
$$ 
giving (vii).
\end{proof}

\section{Ideal lattices}
\label{ideallatsect}
As before, $G$ is a finite abelian group of order $2n$ equipped
with an element $u$ of order $2$.
Theorem \ref{Iwmisclem} below gives a way to view certain ideals
$I$ in $\Z\langle G\rangle$ as $G$-lattices, and
Theorem \ref{I1I2isom} characterizes the ones that are
$G$-isomorphic to $\Z\langle G\rangle$.

\begin{defn}
\label{fractidealdefn}
A {\em fractional $\Z\langle G\rangle$-ideal} is
a finitely generated $\Z\langle G\rangle$-module 
in $\Q\langle G\rangle$ that spans $\Q\langle G\rangle$ over $\Q$.
An {\em invertible} fractional $\Z\langle G\rangle$-ideal is a
fractional $\Z\langle G\rangle$-ideal $I$ such that there is a
fractional $\Z\langle G\rangle$-ideal $J$ with $IJ = \Z\langle G\rangle$,
where $IJ$ is the fractional $\Z\langle G\rangle$-ideal generated by
the products of elements from $I$ and $J$.
\end{defn}

\begin{thm}
\label{Iwmisclem}
Suppose $I \subset \Q\langle G\rangle$ is a fractional $\Z\langle G\rangle$-ideal 
and $w \in \Q\langle G\rangle$. Suppose 
that 
$
I\overline{I} \subset \Z\langle G\rangle \cdot w
$ 
and
$\psi(w) \in\R_{>0}$ for all 
$\psi \in\Psi$.
Then:
\begin{enumerate}
\item
$\overline{w}=w$;
\item 
$w\in \Q\langle G\rangle^\ast$;
\item
$I$ is a $G$-lattice, with $G$-action defined by multiplication in
$\Q\langle G\rangle$, and with lattice structure defined by
$$
\langle x,y\rangle_{I,w} = t\left(\frac{x\overline{y}}{w}\right),
$$
with $t$ as in Definition \ref{tdef}.
\end{enumerate}
\end{thm}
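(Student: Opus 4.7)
\medskip

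\noindent\textbf{Proof plan.} The first two statements are quick consequences of Lemma~\ref{psimisclem} together with the positivity hypothesis on $w$. For (i), since $w\in\Q\langle G\rangle\subset\R\langle G\rangle$, Lemma~\ref{psimisclem}(i) gives $\psi(\overline{w})=\overline{\psi(w)}$ for every $\psi\in\Psi$; as $\psi(w)\in\R_{>0}$, this real number equals its own conjugate, so $\psi(\overline{w}-w)=0$ for all $\psi$. By Lemma~\ref{psimisclem}(iii) applied with $K=\Q$, this forces $\overline{w}=w$. For (ii), the hypothesis gives $\psi(w)\neq 0$ for every $\psi\in\Psi$, and Lemma~\ref{psimisclem}(vi) then yields $w\in\Q\langle G\rangle^\ast$.

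For (iii) I need to verify all the defining properties of a $G$-lattice. First, since $\Z\langle G\rangle$ is a finitely generated $\Z$-module and $I$ is a finitely generated $\Z\langle G\rangle$-module, $I$ is a finitely generated abelian group; it is torsion-free because it embeds in $\Q\langle G\rangle$, so it is free of finite rank. The formula $\langle x,y\rangle_{I,w}=t(x\overline{y}/w)$ is well defined and $\Z$-valued because $x\overline{y}\in I\overline{I}\subset\Z\langle G\rangle\cdot w$ (using (ii) to divide by $w$) and $t$ sends $\Z\langle G\rangle$ to $\Z$. Bilinearity is immediate from $\Q$-linearity of $t$ and of the involution. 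Symmetry uses $\overline{w}=w$: applying Lemma~\ref{miscAlem}(ii)(b) to $a=x\overline{y}/w\in\Z\langle G\rangle$ gives
\[
t(x\overline{y}/w)=t(\overline{x\overline{y}/w})=t(\overline{x}y/\overline{w})=t(y\overline{x}/w).
\]

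The crucial point, positive definiteness, I would handle via the eigenvalue description. By Lemma~\ref{psimisclem}(ii),
\[
\langle x,x\rangle_{I,w}=t(x\overline{x}/w)=\frac{1}{n}\sum_{\psi\in\Psi}\frac{\psi(x)\overline{\psi(x)}}{\psi(w)}=\frac{1}{n}\sum_{\psi\in\Psi}\frac{|\psi(x)|^2}{\psi(w)},
\]
using Lemma~\ref{psimisclem}(i) to identify $\psi(\overline{x})$ with $\overline{\psi(x)}$. Each term is non-negative because $\psi(w)>0$, and if $x\neq 0$ then some $\psi(x)\neq 0$ by Lemma~\ref{psimisclem}(iii), making the sum strictly positive.

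Finally, $I$ is a $\Z\langle G\rangle$-module, so multiplication by $G\subset\Z\langle G\rangle^\ast$ defines a group homomorphism $G\to\Aut_{\Z}(I)$; the image lies in $\Aut(I)$ because for $\sigma\in G$ one has $\sigma\overline{\sigma}=\sigma\sigma^{-1}=1$, so
\[
\langle \sigma x,\sigma y\rangle_{I,w}=t(\sigma\overline{\sigma}\,x\overline{y}/w)=t(x\overline{y}/w)=\langle x,y\rangle_{I,w},
\]
and the relation $u=-1$ in $\Z\langle G\rangle$ ensures that $u$ acts as $-1$ on $I$. The main obstacle is really just the positive-definiteness check, which is the place where the hypothesis $\psi(w)\in\R_{>0}$ enters non-trivially; the rest is bookkeeping with $t$ and the involution.
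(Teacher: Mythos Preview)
Your proof is correct and follows essentially the same approach as the paper: parts (i) and (ii) are handled identically via Lemma~\ref{psimisclem}(i),(iii),(vi), and for (iii) you spell out in full the verifications (well-definedness, symmetry via $\overline{w}=w$, positive definiteness via Lemma~\ref{psimisclem}(i),(ii),(iii), and the $G$-action) that the paper compresses into the single line ``Part (iii) now follows from (i) and (ii) of Lemma~\ref{psimisclem}.''
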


\begin{proof}
By Lemma \ref{psimisclem}(i) 
we have $$\psi(w) = \overline{\psi(w)} = \psi(\bar{w})$$
for all $\psi\in\Psi$. Now (i) follows from Lemma \ref{psimisclem}(iii). 
Lemma \ref{psimisclem}(vi) implies (ii).
Note that 
$
\frac{x\overline{y}}{w}\in \Z\langle G\rangle,
$ 
since $I\overline{I} \subset \Z\langle G\rangle \cdot w$.
Part (iii) now follows from (i) and (ii) of Lemma \ref{psimisclem}.
\end{proof}

\begin{notation}
\label{IwGlatdefn}
Let $I$ and $w$ be as in Theorem \ref{Iwmisclem}.
Define $L_{(I,w)}$ to be
the $G$-lattice $I$ with lattice structure defined by 
$\langle x,y\rangle_{I,w} = t({x\overline{y}}/{w})$.
\end{notation}

\begin{ex}
We have $L_{(\Z\langle G\rangle,1)} = \Z\langle G\rangle$.
\end{ex}

\begin{thm}
\label{I1I2isom}
Suppose that $I_1$ and $I_2$ are fractional $\Z\langle G\rangle$-ideals, 
that $w_1, w_2 \in \Q\langle G\rangle$, 
that 
$I_1\overline{I_1} \subset \Z\langle G\rangle \cdot w_1$ and 
$I_2\overline{I_2} \subset \Z\langle G\rangle \cdot w_2,$ 
and
that $\psi(w_1), \psi(w_2) \in\R_{>0}$ for all  
$\psi \in\Psi$.
Let $L_j = L_{(I_j,w_j)}$ for $j=1,2$.
Then sending $v$ to multiplication by $v$ gives a bijection from 
$$\{ v\in \Q\langle G\rangle :  I_1 = vI_2, w_1 = v\overline{v}w_2 \}
\quad \text{to} \quad  
\{ \text{$G$-isomorphisms $L_2 \isom L_1$} \}$$
and gives a bijection from
$$\{ v\in \Q\langle G\rangle : I_1 = v\Z\langle G\rangle, w_1=v\overline{v} \}
\quad \text{to} \quad  
\{ \text{$G$-isomorphisms 
$\Z\langle G\rangle \isom L_1$} \}.$$
In particular, 
$L_{1}$ is $G$-isomorphic to $\Z\langle G\rangle$ if and only if
there exists $v\in \Q\langle G\rangle$ such that $I_1 = (v)$ 
and $w_1=v\overline{v}$.
\end{thm}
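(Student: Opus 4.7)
The plan is to construct the bijection explicitly in both directions and verify compatibility. Everything flows from the fact that $\Q\langle G\rangle$ is a free $\Q\langle G\rangle$-module of rank one, so its $\Q\langle G\rangle$-linear self-maps are precisely multiplications by elements of $\Q\langle G\rangle$.

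First I would show the ``forward'' direction: given $v\in\Q\langle G\rangle$ with $I_1=vI_2$ and $w_1=v\bar v w_2$, the map $\mu_v:x\mapsto vx$ is a $G$-isomorphism $L_2\isom L_1$. Since $I_1=vI_2$ spans $\Q\langle G\rangle$ over $\Q$, $v\in\Q\langle G\rangle^\ast$ by Lemma \ref{psimisclem}(vi) (none of its $\psi$-values can vanish), so $\mu_v$ is a bijection between the underlying abelian groups, and it commutes with the $G$-action because $G$ acts by multiplication. The inner product check is a one-line computation:
\[
\langle vx,vy\rangle_{I_1,w_1}=t\!\left(\frac{vx\,\overline{vy}}{w_1}\right)=t\!\left(\frac{x\bar y\,v\bar v}{v\bar v\,w_2}\right)=\langle x,y\rangle_{I_2,w_2}.
\]
Injectivity of $v\mapsto\mu_v$ is clear: if $\mu_v=\mu_{v'}$ then $(v-v')I_2=0$, and since $I_2$ spans $\Q\langle G\rangle$ over $\Q$, $v=v'$.

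Next I would establish surjectivity. Given a $G$-isomorphism $\varphi:L_2\isom L_1$, the compatibility with the $G$-action together with $u\mapsto -1$ makes $\varphi$ a $\Z\langle G\rangle$-module isomorphism $I_2\isom I_1$. Tensoring with $\Q$ and using that each $I_j$ spans $\Q\langle G\rangle$ over $\Q$, $\varphi$ extends uniquely to a $\Q\langle G\rangle$-linear isomorphism $\tilde\varphi:\Q\langle G\rangle\isom\Q\langle G\rangle$. Setting $v=\tilde\varphi(1)$, one has $\tilde\varphi(x)=vx$ for all $x$, so in particular $I_1=\varphi(I_2)=vI_2$.

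It remains to derive the identity $w_1=v\bar v w_2$ from the fact that $\mu_v$ is an isometry. Set $\alpha = v\bar v/w_1 - 1/w_2 \in \Q\langle G\rangle$ (both $w_1,w_2$ are units by Lemma \ref{psimisclem}(vi) thanks to the positivity hypothesis). The isometry condition gives $t(\alpha x\bar y)=0$ for all $x,y\in I_2$. Fixing $y$ and using $\Q$-linearity of $t$ together with the fact that $I_2$ spans $\Q\langle G\rangle$ over $\Q$, we deduce $t(\alpha z\bar y)=0$ for all $z\in\Q\langle G\rangle$; then Lemma \ref{miscAlem}(ii)(e) forces $\alpha\bar y=0$. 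Letting $y$ range over $I_2$ and using that $\bar{I_2}$ spans $\Q\langle G\rangle$ over $\Q$, we get $\alpha=0$, i.e., $w_1=v\bar v w_2$. This is the one step I'd call the main obstacle, but it is a standard nondegeneracy-of-$t$ argument.

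Finally, the second bijection is the special case $I_2=\Z\langle G\rangle$, $w_2=1$, $L_2=\Z\langle G\rangle$, and the very last assertion is just the statement that this set is nonempty exactly when such a $v$ exists.
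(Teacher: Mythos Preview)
Your proof is correct and follows essentially the same route as the paper: extend a $G$-isomorphism to a $\Q\langle G\rangle$-linear automorphism of $\Q\langle G\rangle$, identify it as multiplication by some $v$, and use the nondegeneracy of $t$ (Lemma~\ref{miscAlem}(ii)(e)) to pass between the isometry condition and $w_1=v\bar v w_2$. The paper's write-up is more compressed, but the logical skeleton and the key lemma are identical.
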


\begin{proof}
Every $\Z\langle G\rangle$-module
isomorphism $\varphi : L_2 \isom L_1$ extends to a 
$\Q\langle G\rangle$-module isomorphism
$$
L_2 \otimes \Q = \Q\langle G\rangle \to L_1 \otimes \Q = \Q\langle G\rangle,
$$ 
and any such map is multiplication by some $v\in \Q\langle G\rangle^\ast$.
Conversely, for $v\in \Q\langle G\rangle$,
multiplication by $v$ 
defines a $\Z\langle G\rangle$-module isomorphism 
from $L_2$ to $L_1$
if and only if 
$I_1 = vI_2$.
When $I_1 = vI_2$, \ multiplication by $v$ 
is a $G$-isomorphism from $L_2$ to $L_1$
if and only if $w_1 = v\overline{v}w_2$;
this follows from Lemma \ref{miscAlem}(ii)(e), 
since
for all $a,b\in I_2$ we have
$$\langle a,b\rangle_{I_2,w_2} = t\left(\frac{a\overline{b}}{w_2}\right)
\quad \text{and} \quad
\langle av,bv\rangle_{I_1,w_1} = t\left(\frac{a\overline{b}v\overline{v}}{w_1}\right).$$
This gives the first desired bijection.
Taking $I_2=\Z\langle G\rangle$ and $w_2=1$ gives the second bijection.
\end{proof}

\begin{rem}
\label{GSrmk}
We next show how to recover the
Gentry-Szydlo algorithm from Theorem \ref{mainthm}.
The goal of the Gentry-Szydlo algorithm is to find a generator $v$
of a principal ideal $I$ of finite index in the ring $R=\Z[X]/(X^n-1)$, 
given $v\overline{v}$ and a $\Z$-basis for $I$.
Here, $n$ is an odd prime, and for 
$$
v=v(X)=\sum_{i=0}^{n-1}a_iX^i \in R,
$$
its ``reversal''  is
$$\overline{v} = 
v(X^{-1})=a_0 + \sum_{i=1}^{n-1}a_{n-i}X^i  \in R.$$
We take $G$ to be a cyclic group of order $2n$. Then 
$R \cong\Z\langle G\rangle$ as in Example \ref{Gcyclicmodex},
and we identify $R$ with $\Z\langle G\rangle$.
Let $w= v\overline{v} \in \Z\langle G\rangle$ and let
$L=L_{(I,w)}$ as in Notation \ref{IwGlatdefn}. 
Then $L$ is the ``implicit orthogonal lattice'' in \S 7.2 of \cite{GS}.
Once one knows $w$ and a $\Z$-basis for $I$, then one knows $L$. 
Theorem \ref{mainthm} produces a $G$-isomorphism
$\varphi : \Z\langle G\rangle \isom L$ in polynomial time, and thus 
(as in Theorem \ref{I1I2isom}) gives a generator 
$v = \varphi(1)$ in polynomial time. 
\end{rem}

\section{Invertible $G$-lattices}
\label{conceptsforpfsect}

Recall that $G$ is a finite abelian group of order $2n$, with a fixed
element $u$ of order $2$, and $S$ is a set of coset representatives for
$G/\langle u\rangle$.
In Definition \ref{invertdef2} we introduce the concept of an invertible
$G$-lattice. The inverse of such a lattice $L$ is the $G$-lattice
$\overline{L}$ given in Definition \ref{Lbardefn}.
  
\begin{defn}
\label{Lbardefn}
If $L$ is a $G$-lattice, then the
$G$-lattice $\overline{L}$ is a lattice equipped with a lattice
isomorphism 
$$
L\isom \overline{L}, \qquad
x \mapsto \overline{x}
$$ 
and a group homomorphism $G \to \Aut(\overline{L})$
defined by
$$
\sigma \overline{x} = \overline{\sigma^{-1}x}
$$ 
for all $\sigma\in G$ and $x\in L$,
i.e., 
$$
\overline{\sigma x} = \overline{\sigma}\, \overline{x}.
$$
\end{defn}

Existence follows by taking $\overline{L}$ to be $L$ with the appropriate
$G$-action. 
The $G$-lattice $\overline{L}$ is unique up to $G$-isomorphism, 
and we have 
$
\overline{\overline{L}} = L.
$

\begin{defn}
\label{dotproddef}
If $L$ is a $G$-lattice, 
define the {\bf lifted inner product} 
$$
\cdot: L \times \overline{L} \to  \Z\langle G\rangle
$$
by 
$$
x \cdot \overline{y} = 
\sum_{\sigma\in S} \langle x,\sigma y\rangle \sigma \in \Z\langle G\rangle.
$$
\end{defn}
This lifted inner product is independent of the choice of the set $S$, and is
$\Z\langle G\rangle$-bilinear; in fact, it extends $\Q$-linearly,
and for all $x, y \in L\otimes_\Z \Q$
and for all $a \in \Q\langle G\rangle$ we have
\begin{equation}
\label{tdotQeqn}
(ax) \cdot \overline{y} = x \cdot (a\overline{y}) = a(x \cdot \overline{y}),
\end{equation}
\begin{equation}
\label{tdoteqn}
\langle x, y\rangle=t(x \cdot \overline{y}), 
\end{equation}
and 
$
x \cdot \overline{y} = \overline{y \cdot \overline{x}}.
$

\begin{ex}
\label{LIwexdot}
If $I$, $w$, and $L_{(I,w)}$ are as in Theorem \ref{Iwmisclem}
and Notation \ref{IwGlatdefn}, then
$
\overline{L_{(I,w)}} = L_{(\overline{I},{w})},
$ 
and 
applying Lemma \ref{miscAlem}(ii)(d) with $a=\frac{x\overline{y}}{w}$
shows that 
$
x \cdot \overline{y} = \frac{x\overline{y}}{w}.
$
In particular, 
if $L=\Z\langle G\rangle$, then $\overline{L}=\Z\langle G\rangle$ 
with $\overline{\phantom{x}}$ having the same meaning as in
Definition \ref{bardef} for $A=\Z$, and with $\cdot$
being multiplication in $\Z\langle G\rangle$.
Note that when $w \neq 1$, ideals $I$ in $\Z\langle G\rangle$
do not inherit their lifted inner product from that of $\Z\langle G\rangle$.
\end{ex}

\begin{defn}
\label{invertdef2}
A $G$-lattice $L$ is {\bf invertible} if the following three conditions all hold:
\begin{enumerate}
\item $\rank(L) = n = \#G/2$;
\item
$L$ is unimodular (see Definition \ref{unimodulardef});
\item 
for each $m \in \Z_{>0}$ there exists $e_m \in L$ such that
$$
\{\sigma e_m + mL : \sigma\in G\}
$$ 
generates the abelian group $L/mL$.
\end{enumerate}
\end{defn}

It is clear from the definition that
invertibility is preserved under $G$-lattice isomorphisms. 
Definition \ref{invertdef2} implies that $L/mL$ is a
free $(\Z/m\Z)\langle G\rangle$-module of rank one for all $m>0$.
Given an ideal, it is a hard problem to decide if it is principal.
But checking (iii) of Definition \ref{invertdef2} is easy
algorithmically; see Algorithm \ref{elemprop2} below.

\begin{lem}
\label{stdGex}
If $L$ is a $G$-lattice and $L$ is $G$-isomorphic to the standard $G$-lattice, 
then $L$ is invertible. 
\end{lem}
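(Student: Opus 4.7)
The plan is to verify the three conditions of Definition \ref{invertdef2} directly for the standard $G$-lattice $\Z\langle G\rangle$, and then invoke the remark (immediately following that definition) that invertibility is preserved under $G$-isomorphisms to conclude the same for any $L$ that is $G$-isomorphic to it.

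First, I would check conditions (i) and (ii). By the proposition in Section \ref{modgpringsect} (the one asserting $\Z\langle G\rangle \cong \Z^n$ as lattices), the standard $G$-lattice has rank exactly $n = \#G/2$, giving (i). Moreover, the set $S$ of coset representatives of $G/\langle u\rangle$ is an orthonormal basis for $\Z\langle G\rangle$ (as explicitly noted in the paper just before Example \ref{Gcyclicmodex}), so the Gram matrix with respect to this basis is the identity and hence $\det(\Z\langle G\rangle) = 1$; this yields (ii).

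For condition (iii), I would exhibit a single universal choice $e_m = 1 \in \Z\langle G\rangle$ that works for every $m \in \Z_{>0}$. Indeed, the orbit $\{\sigma\cdot 1 : \sigma \in G\} = G$ contains $S$, and $S$ is a $\Z$-basis of $\Z\langle G\rangle$ by Lemma \ref{miscAlem}(i). Consequently the reductions $\{\sigma + m\Z\langle G\rangle : \sigma \in G\}$ generate $\Z\langle G\rangle/m\Z\langle G\rangle$ as an abelian group (in fact already the image of $S$ does), verifying (iii). This completes the check that $\Z\langle G\rangle$ itself is invertible.

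Finally, if $\varphi : \Z\langle G\rangle \isom L$ is a $G$-isomorphism, then conditions (i) and (ii) transfer immediately since $\varphi$ is a lattice isomorphism, and condition (iii) transfers because $\varphi$ is $G$-equivariant, so $e_m := \varphi(1) \in L$ has the property that $\{\sigma e_m + mL : \sigma \in G\} = \varphi(\{\sigma + m\Z\langle G\rangle : \sigma \in G\})$ generates $L/mL = \varphi(\Z\langle G\rangle/m\Z\langle G\rangle)$. No step here is really an obstacle; the only thing to be careful about is invoking the right earlier results in the paper (the orthonormality of $S$, Lemma \ref{miscAlem}(i), and $G$-equivariance of $\varphi$) rather than rederiving them.
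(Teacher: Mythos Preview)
Your proof is correct and follows essentially the same approach as the paper's: both take $e_m$ to be the image of $1$ under the given $G$-isomorphism (the paper writes this as $e = \varphi(1)$ and sets $e_m = e$ for all $m$) and use that $\{\sigma\cdot 1 : \sigma\in G\}$ generates $\Z\langle G\rangle$ as an abelian group. The paper dismisses (i) and (ii) as ``easy,'' whereas you spell them out by citing the orthonormality of $S$ and Lemma~\ref{miscAlem}(i), but the argument is the same.
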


\begin{proof}
Parts (i) and (ii) of Definition \ref{invertdef2} are easy.
For (iii), observe that the group $\Z\langle G\rangle$ is generated
by $\{ \sigma 1 : \sigma\in G\}$, so the group $L$ is generated
by $\{ \sigma e : \sigma\in G\}$ where $e$ is the image of $1$
under the isomorphism. Now let $e_m=e$ for all $m$.
\end{proof}

\section{Determining invertibility}
\label{detinvertalgsect}
Fix as before a finite abelian group $G$ of order $2n$ equipped with  
an element $u$ of order $2$.

Algorithm \ref{elemprop2} below determines whether a $G$-lattice
is invertible.
In Proposition \ref{elemprop2pf} we show that
Algorithm \ref{elemprop2} produces correct output
and runs in polynomial time.

In \cite{finmodspaper} we obtain a deterministic polynomial-time algorithm
that on input a finite commutative ring $R$ 
and a finite $R$-module $M$,
decides whether there exists $y\in M$ such that $M = Ry$, and if there is, 
finds such a $y$.
Applying this with $R=\Z\langle\G\rangle/(m)$ and
$M=L/mL$ gives the  algorithm in the following result.

\begin{prop}
\label{elemprop}
There is a deterministic  polynomial-time algorithm that,
given $\G$, $u$, a $\G$-lattice $L$, and $m\in\Z_{>0}$,  
decides whether there exists $e_m\in L$ such that 
$$
\{ \sigma e_m + mL : \sigma\in G\}
$$ 
generates $L/mL$ as an
abelian group,
and if there is, finds one.
\end{prop}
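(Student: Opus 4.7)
The plan is to translate the question into cyclicity of a module over the finite commutative ring $R := (\Z/m\Z)\langle G\rangle$ and then invoke the algorithm of \cite{finmodspaper}. Set $M := L/mL$; the $G$-action on $L$ descends to an $R$-module structure on the finite abelian group $M$. The condition that $\{\sigma e_m + mL : \sigma \in G\}$ generates $L/mL$ as an abelian group is equivalent to $M = R\bar{e}_m$, where $\bar{e}_m$ denotes the image of $e_m$. Indeed, since $u$ acts as $-1$, the abelian group generated by $\{\sigma \bar{e}_m : \sigma \in G\}$ equals the $\Z$-span of $\{\sigma \bar{e}_m : \sigma \in S\}$; by Lemma \ref{miscAlem}(i), taking all $\Z/m\Z$-linear combinations of these elements is precisely $R\bar{e}_m$.

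Given this reformulation, the algorithm proceeds as follows. First, from the multiplication table of $G$ and from $m$, it constructs $R$ as a free $\Z/m\Z$-module on $S$ with multiplication induced from that of $G$ modulo the relation $u + 1 = 0$. Second, from the given basis of $L$ and the matrices of the $G$-action, it constructs $M$ by reducing both modulo $m$. Both $R$ and $M$ are encoded in space polynomial in $n$ and $\log m$, and hence in the size of the input. Third, it feeds $(R, M)$ to the black box of \cite{finmodspaper}, which in deterministic polynomial time decides whether there exists $y \in M$ with $M = Ry$ and produces such a $y$ when one exists. Any lift $e_m \in L$ of $y$, e.g.\ the one whose coordinates in the given basis lie in $\{0, 1, \ldots, m - 1\}$, then satisfies the proposition; if the black box reports that no such $y$ exists, then no $e_m$ exists either.

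The only conceptual ingredient is the reformulation in the first paragraph; everything else is routine encoding and a call to the cited algorithm. The main thing I expect to check carefully is that the presentations of $R$ and $M$ handed to \cite{finmodspaper} satisfy its input conventions and that all reductions modulo $m$ can be performed in polynomial time. Given that $\#S = n$ and each element of $\Z/m\Z$ takes $O(\log m)$ bits, these bookkeeping requirements are met, and the overall runtime is polynomial.
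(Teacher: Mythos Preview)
Your proposal is correct and follows exactly the same approach as the paper: reduce to the cyclicity question for the finite $(\Z/m\Z)\langle G\rangle$-module $L/mL$ and invoke the algorithm of \cite{finmodspaper}. The paper's own argument is a one-line application of that black box with $R=\Z\langle G\rangle/(m)$ and $M=L/mL$; you have simply spelled out the equivalence and the encoding details more carefully.
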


\begin{lem}
\label{inviiilem}
Suppose that $L$ is a $\G$-lattice, $m\in\Z_{>1}$, and $e\in L$.
Then:
\begin{enumerate}
\item
 $\{ \sigma e + mL : \sigma \in G\}$ generates $L/mL$
as an abelian group if and only if $L/(\Z\langle\G\rangle\cdot e)$ 
is finite of order coprime to $m$;
\item
if $\rk(L)=n$ and
$L/(\Z\langle\G\rangle\cdot e)$ is finite, then the map
$$
\Z\langle\G\rangle \to \Z\langle\G\rangle \cdot e, \qquad a\mapsto ae
$$
is an isomorphism of $\Z\langle\G\rangle$-modules.
\end{enumerate}
\end{lem}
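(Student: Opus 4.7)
The plan is to translate both parts of the lemma into statements about the $\Z\langle\G\rangle$-submodule $N := \Z\langle\G\rangle\cdot e \subseteq L$, and then dispose of them by finitely-generated-abelian-group bookkeeping. The key preliminary observation is that $N$ coincides with the abelian subgroup of $L$ generated by $\{\sigma e : \sigma\in G\}$. This is because Lemma \ref{miscAlem}(i) (applied with $A=\Z$) says $\Z\langle\G\rangle = \bigoplus_{\sigma\in S}\Z\sigma$, and since $G = S\sqcup uS$ with $u$ acting as $-1$, the set $\{\sigma : \sigma\in G\} = S\cup(-S)$ already spans $\Z\langle\G\rangle$ over $\Z$; multiplying by $e$ gives the claim.

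For (i), this identification shows that $\{\sigma e + mL : \sigma\in G\}$ generates $L/mL$ if and only if $N+mL=L$, which is equivalent to $m\cdot(L/N)=L/N$. Since $L$ is finitely generated as an abelian group, so is $L/N$; write $L/N \cong \Z^r\oplus T$ with $T$ finite. Because $m\geq 2$, the condition $m\Z^r=\Z^r$ forces $r=0$, and then $mT=T$ with $T$ finite forces multiplication by $m$ to be a bijection on $T$, hence $\gcd(m,\#T)=1$. Conversely, if $L/N$ is finite of order coprime to $m$, multiplication by $m$ is automatically bijective on $L/N$, so $m(L/N)=L/N$. This settles (i).

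For (ii), consider the $\Z\langle\G\rangle$-linear map $\phi\colon\Z\langle\G\rangle \to N$, $a\mapsto ae$, which is surjective by the definition of $N$. By the proposition describing the standard $G$-lattice, $\Z\langle\G\rangle$ is free of $\Z$-rank $n$. Under the hypotheses $\rk(L)=n$ and $L/N$ finite, $N$ is a finite-index subgroup of the rank-$n$ free abelian group $L$, so $N$ also has $\Z$-rank $n$. A surjective homomorphism between two finitely generated abelian groups of the same $\Z$-rank must have torsion kernel; since $\Z\langle\G\rangle$ is torsion-free, the kernel of $\phi$ is zero, so $\phi$ is an isomorphism of $\Z\langle\G\rangle$-modules.

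The only subtle point is the implication $m(L/N)=L/N \Rightarrow L/N$ finite coprime to $m$ in part (i): one must invoke finite generation of $L$ (and therefore of $L/N$), since otherwise $m$-divisible groups like $\Q$ or $\Z[1/m]$ would be counterexamples. Everything else is a direct transcription between the $\Z\langle\G\rangle$-module structure and the underlying abelian-group structure, and requires no further input.
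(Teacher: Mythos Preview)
Your proof is correct and follows essentially the same approach as the paper: both arguments rephrase (i) as $\Z\langle G\rangle e + mL = L$, i.e.\ multiplication by $m$ is surjective on $L/(\Z\langle G\rangle e)$, and then use finite generation to conclude; and both dispose of (ii) by the rank comparison showing a surjection $\Z\langle G\rangle \twoheadrightarrow \Z\langle G\rangle e$ between free abelian groups of rank $n$ is injective. Your write-up is in fact slightly more explicit (e.g.\ justifying why the $\Z$-span of $\{\sigma e:\sigma\in G\}$ equals $\Z\langle G\rangle e$, and spelling out the structure-theorem step), but there is no substantive difference.
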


\begin{proof}
The set $\{ \sigma e + mL : \sigma \in G\}$ generates $L/mL$
as an abelian group if and only if $L = \Z\langle\G\rangle e + mL$,
and if and only if 
multiplication by $m$ is surjective as a map from 
$L/(\Z\langle\G\rangle\cdot e)$ to itself.
Since $L/(\Z\langle\G\rangle\cdot e)$ is a finitely generated abelian group,
this holds if and only if $L/(\Z\langle\G\rangle\cdot e)$ is finite of order 
coprime to $m$. This gives (i).

Now suppose that $\rk(L)=n$ and
$L/(\Z\langle\G\rangle\cdot e)$ is finite.
The map in (ii) is clearly $\Z\langle\G\rangle$-linear and surjective.
Since $\Z\langle\G\rangle$ and $\Z\langle\G\rangle e$
both have rank $n$ over $\Z$, the map is injective.
\end{proof}

\begin{algorithm}
\label{elemprop2}
Given $\G$, $u$, and a $G$-lattice $L$,
the algorithm decides whether $L$ is invertible.

\begin{enumerate}
\item
If $\rank(L)\neq n$, output ``no'' (and stop).
\item
Compute the determinant of the Gram matrix for $L$.
If it is not $1$, output ``no'' (and stop).
\item
Use  Proposition \ref{elemprop} to determine if
$e_2$ (in the notation of Definition \ref{invertdef2}(iii)) exists. 
If no $e_2$ exists, output ``no'' and stop. 
Otherwise, use Proposition \ref{elemprop} to compute $e_2 \in L$.
\item
Compute the order $q$ of the group $L/(\Z\langle\G\rangle\cdot e_2)$.
\item
Use Proposition \ref{elemprop} to determine if
$e_q$ exists. If no $e_q$ exists, output ``no''.
Otherwise, output ``yes''.
\end{enumerate}
\end{algorithm}

\begin{prop}
\label{elemprop2pf}
Algorithm \ref{elemprop2} is a deterministic polynomial-time algorithm 
that, given $\G$, $u$, and a $\G$-lattice $L$,
decides whether $L$ is invertible.
\end{prop}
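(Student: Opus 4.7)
The plan is to verify the two assertions in the proposition separately: correctness of the algorithm, and polynomial-time complexity of each of its five steps.

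For correctness, steps (1) and (2) directly check conditions (i) and (ii) of Definition \ref{invertdef2}. The substantive point is to show that, under these two conditions, the algorithm's remaining checks (existence of $e_2$ and of $e_q$, where $q = |L/(\Z\langle G\rangle \cdot e_2)|$) are together equivalent to condition (iii) of Definition \ref{invertdef2}. One direction is trivial. For the converse, suppose both $e_2$ and $e_q$ exist. By Lemma \ref{inviiilem}(i), $q$ is odd, and $q' := |L/(\Z\langle G\rangle \cdot e_q)|$ is coprime to $q$. Given any $m \in \Z_{>0}$, factor $m = ab$ where $a$ is the largest divisor of $m$ supported only at primes dividing $q$, so that $\gcd(a,b) = 1$. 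The Chinese Remainder Theorem gives a $\Z\langle G\rangle$-module isomorphism $L/mL \isom L/aL \times L/bL$. Since $q'$ is coprime to $a$, Lemma \ref{inviiilem}(i) implies $e_q$ generates $L/aL$; similarly, since $q$ is coprime to $b$, $e_2$ generates $L/bL$. Choosing $\alpha,\beta \in \Z$ with $\alpha a + \beta b = 1$ and setting $e_m = \beta b\cdot e_q + \alpha a \cdot e_2$, one has $e_m \equiv e_q \pmod{aL}$ and $e_m \equiv e_2 \pmod{bL}$. Because generation of a product module over a product ring holds factor-wise, $e_m$ generates $L/mL$, verifying condition (iii).

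For polynomial-time complexity, steps (1), (2), and (3) are routine: step (3) is a direct invocation of Proposition \ref{elemprop} with $m = 2$. In step (4), Lemma \ref{inviiilem}(ii) provides the $\Z$-basis $\{\sigma e_2 : \sigma \in S\}$ for $\Z\langle G\rangle \cdot e_2$, so the index $q$ is computable in polynomial time as (the square root of) a Gram determinant, or equivalently via Smith normal form of the inclusion matrix. The delicate point is that step (5) calls Proposition \ref{elemprop} with $m = q$, so its running time is polynomial in $\log q$, and one must bound $\log q$ by a polynomial in the input size. Since $L$ is unimodular, $q^2 = \det(\Z\langle G\rangle \cdot e_2)$, and the entries of the associated Gram matrix are bounded by $|e_2|^2$ via Cauchy--Schwarz. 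Because Proposition \ref{elemprop} runs in polynomial time and outputs $e_2$ in polynomial bitlength, $|e_2|^2$ and therefore $\log q$ are polynomially bounded in the input.

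The main obstacle is the correctness reduction from ``$e_m$ exists for all $m > 0$'' to ``$e_2$ and $e_q$ both exist''; the rest of the argument is routine. Once one notices the decomposition $m = ab$ relative to the prime support of $q$, Lemma \ref{inviiilem}(i) and the Chinese Remainder Theorem combine to produce the required $e_m$ with little further work.
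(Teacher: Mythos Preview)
Your proof is correct and follows essentially the same approach as the paper: both use Lemma~\ref{inviiilem}(i) together with the Chinese Remainder Theorem to reduce condition~(iii) of Definition~\ref{invertdef2} to the two checks $m=2$ and $m=q$. Your organization is slightly more direct (splitting an arbitrary $m$ into its $q$-part and coprime-to-$q$ part in one step, rather than the paper's two-step reduction first to prime powers and then to primes), and you are more explicit than the paper about why $\log q$ is polynomially bounded, which is needed for step~(v) to run in polynomial time.
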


\begin{proof}
If Step (ii) outputs ``no'' then $L$ is not unimodular so it is not invertible.
We need to check Definition \ref{invertdef2}(iii) for
all $m$'s in polynomial time.
We show that it suffices to check two particular values of $m$, namely
$m=2$ and $q$.
By Lemma~\ref{inviiilem}(i),
the group $L/(\Z\langle\G\rangle\cdot e_2)$ is finite of odd order $q$.
If no $e_q$ exists, $L$ is not invertible.
If $e_q$ exists, then for {\em all} $m\in\Z_{>0}$ there exists
$e_m\in L$ that generates $L/mL$ as a $\Z\langle\G\rangle/(m)$-module,
as follows. 
We can reduce to $m$ being a prime power $p^t$, since if
$\gcd(m,m')=1$ then $L/mm'L$ is free of rank $1$ over
$\Z\langle\G\rangle/(mm')$ if and only if
$L/mL$ is free of rank $1$ over
$\Z\langle\G\rangle/(m)$ and
$L/m'L$ is free of rank $1$ over
$\Z\langle\G\rangle/(m')$.
Lemma~\ref{inviiilem}(i) now allows us to reduce to the case $m=p$. 
If $p$ does not divide $q$, we can take $e_{p} = e_2$.
If $p$ divides $q$, we can take $e_p=e_q$.
\end{proof}

\section{Equivalent conditions for invertibility}
\label{equivinvertdefsect}

In this section we prove Theorem \ref{invertequiv}, which
gives equivalent conditions for invertibility.

\begin{thm}
\label{invertequiv}
If $L$ is a $G$-lattice, 
then the following statements are equivalent:
\begin{enumerate}
\item[\rm (a)]
$L$ is invertible;
\item[\rm (b)]
the map 
$\varphi : L \otimes_{\Z\langle G\rangle} \overline{L} \to \Z\langle G\rangle$
defined by
$\varphi(x\otimes \overline{y}) = x \cdot \overline{y}$
is an isomorphism of $\Z\langle G\rangle$-modules, where
$\cdot$ is defined in Definition \ref{dotproddef};
\item[\rm (c)]
there is a  $\Z\langle G\rangle$-module $M$ such that
$L \otimes_{\Z\langle G\rangle} M$ and $\Z\langle G\rangle$
are isomorphic as $\Z\langle G\rangle$-modules, and as a lattice $L$ is
unimodular;
\item[\rm (d)]
$L$ is $G$-isomorphic to 
$L_{(I,w)}$
for some fractional $\Z\langle G\rangle$-ideal $I$  and 
some $w \in \Q\langle G\rangle^\ast$ such that
$I\overline{I} = \Z\langle G\rangle \cdot w$ and
$\psi(w) \in\R_{>0}$ for all 
$\psi \in\Psi$, with $L_{(I,w)}$  as in
Notation \ref{IwGlatdefn}. 
\end{enumerate}
\end{thm}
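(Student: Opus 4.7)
The plan is to establish the cycle of implications (a)$\Rightarrow$(d)$\Rightarrow$(b)$\Rightarrow$(c)$\Rightarrow$(a), using the decomposition of $\Q\langle G\rangle$ from Lemma \ref{psimisclem}(v) and the Picard-theoretic viewpoint on invertible modules.

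For (a)$\Rightarrow$(d), Definition \ref{invertdef2}(iii) makes $L/mL$ cyclic as a $(\Z/m\Z)\langle G\rangle$-module, and comparing the orders $|L/mL|=m^n=|(\Z/m\Z)\langle G\rangle|$ shows it is in fact free of rank $1$. Passing to inverse limits at each prime $p$ gives that $L\otimes_\Z\Z_p$ is free of rank $1$ over $\Z_p\langle G\rangle$, so $L$ is a finitely generated projective $\Z\langle G\rangle$-module of constant rank $1$; together with Lemma \ref{psimisclem}(v) this yields $L\otimes_\Z\Q\cong\Q\langle G\rangle$ as $\Q\langle G\rangle$-modules, since any invertible module over a finite product of fields is principal. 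Fix such an isomorphism and let $I$ be the image of $L$, a fractional $\Z\langle G\rangle$-ideal. Extend the lifted inner product $\Q$-linearly to a $\Q\langle G\rangle$-bilinear form on $\Q\langle G\rangle\times\Q\langle G\rangle$ (second variable twisted by bar); by $\Q\langle G\rangle$-linearity it has the shape $(x,\overline{y})\mapsto x\overline{y}/w$ for a unique $w\in\Q\langle G\rangle^\ast$. The hermitian symmetry $\overline{x\cdot\overline{y}}=y\cdot\overline{x}$ forces $\overline{w}=w$; applying Lemma \ref{psimisclem}(vii) with $z=1/w$ together with positive-definiteness of the inner product gives $\psi(w)>0$ for every $\psi\in\Psi$; and the integrality $x\cdot\overline{y}\in\Z\langle G\rangle$ for $x,y\in I$ gives $I\overline{I}\subset w\Z\langle G\rangle$. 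For the reverse inclusion I would localize at each prime $p$: a generator $e$ of $I\otimes\Z_p$ yields the $\Z_p$-basis $\{\sigma e\}_{\sigma\in S}$ of $L\otimes\Z_p$, in which the Gram matrix equals $(t(\sigma\overline{\tau}v))_{\sigma,\tau\in S}$ with $v=e\overline{e}/w\in\Z_p\langle G\rangle$, and its determinant coincides with the $\Z_p$-determinant of multiplication by $v$ on $\Z_p\langle G\rangle$; unimodularity of $L$ forces this determinant to be a unit in $\Z_p$, hence $v\in\Z_p\langle G\rangle^\ast$, and so $I\overline{I}\otimes\Z_p=w\Z_p\langle G\rangle$, yielding the global equality.

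The remaining implications are shorter. For (d)$\Rightarrow$(b), when $L=L_{(I,w)}$ with $I\overline{I}=w\Z\langle G\rangle$ the fractional ideal $I$ is invertible with inverse $\overline{I}/w$, so the standard multiplication map $I\otimes_{\Z\langle G\rangle}(\overline{I}/w)\to\Z\langle G\rangle$ is an isomorphism, and Example \ref{LIwexdot} identifies $\varphi$ with this map composed with the scaling isomorphism $\overline{I}\cong\overline{I}/w$. For (b)$\Rightarrow$(c), take $M=\overline{L}$; unimodularity follows by factoring the lattice-duality map $L\to L^\ast$, $x\mapsto\langle x,\cdot\rangle$, as $L\to\Hom_{\Z\langle G\rangle}(\overline{L},\Z\langle G\rangle)\to\Hom_\Z(\overline{L},\Z)=L^\ast$, where the first arrow is the adjoint of $\varphi$ (an isomorphism, since $L$ and $\overline{L}$ are now invertible $\Z\langle G\rangle$-modules and mutual inverses) and the second is $f\mapsto t\circ f$ (an isomorphism locally, because the trace form on $\Z\langle G\rangle$ is unimodular and $\overline{L}$ is locally free of rank $1$, hence an isomorphism globally). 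For (c)$\Rightarrow$(a), the existence of $M$ with $L\otimes_{\Z\langle G\rangle}M\cong\Z\langle G\rangle$ makes $L$ a finitely generated projective $\Z\langle G\rangle$-module of constant rank $1$, so $\rank_\Z(L)=n$; unimodularity is assumed; and $L/mL$ is an invertible $(\Z/m\Z)\langle G\rangle$-module, but $(\Z/m\Z)\langle G\rangle$ is a finite product of local Artinian rings whose Picard group is therefore trivial, so $L/mL$ is free of rank $1$ and any free generator supplies the $e_m$ required by Definition \ref{invertdef2}(iii).

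The main obstacle is the step (a)$\Rightarrow$(d), and within that the verification of the ideal equality $I\overline{I}=w\Z\langle G\rangle$: the inclusion from integrality of the lifted inner product is immediate, but the reverse inclusion requires coupling the unimodularity of the lattice with a local determinant computation identifying the Gram form with multiplication by $e\overline{e}/w$ in $\Z_p\langle G\rangle$, and then deducing unit-ness from the resulting $\Z_p$-invertibility of that determinant.
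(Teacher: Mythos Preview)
Your proof follows the same cycle (a)$\Rightarrow$(d)$\Rightarrow$(b)$\Rightarrow$(c)$\Rightarrow$(a) as the paper, and the steps (d)$\Rightarrow$(b), (b)$\Rightarrow$(c), (c)$\Rightarrow$(a) are essentially the same as the paper's (your appeal to the triviality of the Picard group of a finite product of Artinian local rings is just a repackaging of the paper's Lemma~\ref{fractideallem}(i)).

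The one substantive difference is the core of (a)$\Rightarrow$(d), namely the equality $I\overline{I}=w\Z\langle G\rangle$. The paper's Lemma~\ref{aimpliesdlem} argues \emph{globally}: it shows that for each $m$ the index $i(m)=(L:\Z\langle G\rangle e_m)$ lies in $L\cdot L^{-1}$, and since the $i(m)$ are pairwise coprime one gets $1\in L\cdot L^{-1}$, whence $L\cdot\overline{L}=\Z\langle G\rangle$ and the ideal equality follows. You instead \emph{localize}: at each prime $p$ you take a $\Z_p\langle G\rangle$-generator $e$ of $I\otimes\Z_p$, observe that the Gram matrix in the basis $\{\sigma e\}_{\sigma\in S}$ is $(t(\sigma\overline{\tau}v))_{\sigma,\tau}$ with $v=e\overline{e}/w$, identify this with the matrix of multiplication by $v$ on $\Z_p\langle G\rangle$, and use unimodularity to force $v\in\Z_p\langle G\rangle^\ast$. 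Both arguments are valid; the paper's is more elementary and self-contained, while yours makes the role of unimodularity in producing the \emph{equality} (not just the inclusion) more transparent.

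One small point to tighten: when you write that the lifted inner product ``has the shape $(x,\overline{y})\mapsto x\overline{y}/w$ for a unique $w\in\Q\langle G\rangle^\ast$'', you are implicitly asserting that $c:=1\cdot\overline{1}\in\Q\langle G\rangle^\ast$. This does hold, but it needs a line of argument (as in the paper's Lemma~\ref{aimpliesdlem}): if $ac=0$ with $a\neq 0$, then $\langle a,a\rangle=t(a\overline{a}c)=0$, contradicting positive-definiteness; hence $c$ is a non-zero-divisor in the product of fields $\Q\langle G\rangle$, so a unit.
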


We will prove Theorem \ref{invertequiv} in a series of lemmas.
The equivalence of (a) and (c) says that being invertible as
a $G$-lattice is equivalent to being both unimodular as a lattice and invertible
as a $\Z\langle G\rangle$-module.

\begin{defn}
Suppose $R$ is a commutative ring.
An $R$-module is {\bf projective} if it is a direct summand of a free $R$-module.
An $R$-module $M$ is {\bf flat} if
whenever $N_1 \hookrightarrow N_2$ is an injection of $R$-modules,
then the induced map 
$$
M\otimes_{R} N_1 \to M\otimes_{R} N_2
$$
is injective.
\end{defn}

\begin{lem}
\label{flatproj}
Suppose that $L$ is a $\Z$-free $\Z\langle G\rangle$-module of rank $\#G/2$,
and for each $m \in \Z_{>0}$ there exists $e_m \in L$ such that
$$
\{\sigma e_m + mL : \sigma\in G\}
$$ 
generates the abelian group $L/mL$.
Then:
\begin{enumerate}
\item
there is a $\Z\langle G\rangle$-module $M$ such that
$L \oplus M$ and $\Z\langle G\rangle \oplus \Z\langle G\rangle$ are
isomorphic as $\Z\langle G\rangle$-modules, and
\item
$L$ is projective and flat as a $\Z\langle G\rangle$-module.
\end{enumerate}
\end{lem}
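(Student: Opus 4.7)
First I would produce a surjection $\Z\langle G\rangle^2 \twoheadrightarrow L$ out of the generators $e_m$ from the hypothesis. Applying Lemma \ref{inviiilem}(i) with $m = 2$ and $e = e_2$, the quotient $L/(\Z\langle G\rangle \cdot e_2)$ is finite of odd order; call this order $q$. By Lemma \ref{inviiilem}(ii), the map $a \mapsto ae_2$ embeds $\Z\langle G\rangle$ in $L$. Now invoking the hypothesis at $m = q$ supplies $e_q \in L$ with $L = \Z\langle G\rangle \cdot e_q + qL$. Setting $N = \Z\langle G\rangle \cdot e_2 + \Z\langle G\rangle \cdot e_q$, the order of $L/N$ divides $q$ (since $L/N$ is a quotient of $L/(\Z\langle G\rangle \cdot e_2)$), while $L \subseteq N + qL$ gives $L/N = q(L/N)$; combined, these force $L/N = 0$. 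Thus
$$\varphi: \Z\langle G\rangle^2 \to L, \qquad (a, b) \mapsto ae_2 + be_q,$$
is surjective.

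To split $\varphi$, I would show $L$ is projective over $R := \Z\langle G\rangle$. Since $R$ is Noetherian and $L$ is finitely generated, this reduces to checking that $L_\p$ is free of rank one over $R_\p$ at every prime $\p$ of $R$. If $p := \p \cap \Z$ is nonzero, Lemma \ref{inviiilem} applied to $m = p$, $e = e_p$ gives $\Z\langle G\rangle \cdot e_p \hookrightarrow L$ with finite cokernel of order coprime to $p$; tensoring with $\Z_p$ (flat over $\Z$) kills that cokernel, so $L \otimes_\Z \Z_p \cong \Z_p\langle G\rangle$ as $R \otimes_\Z \Z_p$-modules, and localizing at $\p$ yields $L_\p \cong R_\p$. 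If $\p \cap \Z = (0)$, the analogous argument starting from $e_2$ and tensoring with $\Q$ (which likewise kills any finite abelian group) gives $L \otimes_\Z \Q \cong \Q\langle G\rangle$ as $\Q\langle G\rangle$-modules, and localizing at $\p$ once again gives $L_\p \cong R_\p$. Hence $L$ is locally free of rank one, so projective.

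With $L$ projective, the surjection $\varphi$ admits a $\Z\langle G\rangle$-linear section, and taking $M := \ker \varphi$ gives $\Z\langle G\rangle^2 \cong L \oplus M$, which establishes (i). Part (ii) is then immediate: $L$ is a direct summand of a free $\Z\langle G\rangle$-module, hence projective, and projective modules are flat.

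The main obstacle is the projectivity step, since the hypothesis only gives cyclicity of $L/mL$ as a $(\Z/m\Z)\langle G\rangle$-module for positive integers $m$, whereas projectivity must be verified at every prime of $\Z\langle G\rangle$, including primes lying over the generic point of $\Z$. The key observation that unlocks this is that each $e_m$ produces not just a surjection modulo $m$ but an honest embedding $\Z\langle G\rangle \hookrightarrow L$ whose cokernel has order coprime to $m$, so this cokernel disappears under the base change to $\Z_p$ (for $p \mid m$) or to $\Q$, translating the hypothesis into local freeness at every prime of $R$.
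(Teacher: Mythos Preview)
Your argument is correct, but it diverges from the paper's at the splitting step. Both you and the paper build the surjection $\Z\langle G\rangle^2\twoheadrightarrow L$ from $e_2$ and $e_q$. The paper, however, never passes through localization: it observes that $q=(L:\Z\langle G\rangle e_2)$ and $r=(L:\Z\langle G\rangle e_q)$ are coprime, picks $a,b\in\Z$ with $ar+bq=1$, and checks directly that $x\mapsto(bqx,arx)$ is a $\Z\langle G\rangle$-linear section of the addition map $\Z\langle G\rangle e_2\oplus\Z\langle G\rangle e_q\to L$ (using $qL\subset\Z\langle G\rangle e_2$ and $rL\subset\Z\langle G\rangle e_q$). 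This yields (i) immediately, and projectivity and flatness then follow \emph{from} the direct-sum decomposition rather than being proved first.

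Your route---show $L_{\p}\cong R_{\p}$ at every prime via the appropriate $e_m$, conclude projectivity, then split---is perfectly valid and in fact establishes the slightly stronger fact that $L$ is locally free of rank one. The cost is invoking the equivalence of projectivity with local freeness for finitely generated modules over Noetherian rings, whereas the paper's B\'ezout splitting is entirely elementary and self-contained. On the other hand, your approach makes transparent \emph{why} the hypothesis on all $m$ is the right one: it is exactly what is needed to witness local freeness at each residue characteristic.
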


\begin{proof}
Let 
$q = (L : \Z\langle G\rangle e_2).$
By Lemma \ref{inviiilem}(i), we have that $q$ is finite and odd. 
Let 
$r = (L : \Z\langle G\rangle e_q).$ 
By Lemma \ref{inviiilem}(i), we have that $r$ is finite and coprime to $q$.
Take $a,b\in \Z$ such that $ar + bq = 1$.
Let 
$
N = \Z\langle G\rangle e_2 \oplus \Z\langle G\rangle e_q. 
$
By Lemma \ref{inviiilem}(ii) we have
$N \cong \Z\langle G\rangle \oplus \Z\langle G \rangle$ 
as $\Z\langle G\rangle$-modules.
Define 
$$
p : N \to L \quad \text{ by } \quad (x,y) \mapsto x+y
$$ 
and 
$$
s : L \to N \quad \text{ by } \quad x \mapsto (bqx,arx).
$$
Then $p\circ s$
is the identity on $L$. 
Thus, $$L \oplus \ker(p) \cong N
\cong \Z\langle G\rangle \oplus \Z\langle G\rangle$$
 as $\Z\langle G\rangle$-modules.
 So (i) holds with $M = \ker(p)$.
Since $L$ is a direct summand of a free module,
 $L$ is projective.
All projective modules are flat (by Example (1) in I.2.4 of \cite{BourbCommAlg}).
\end{proof}

Recall that the notions of fractional $\Z\langle G\rangle$-ideal
and invertible fractional $\Z\langle G\rangle$-ideal
were defined in Definition \ref{fractidealdefn}.

\begin{lem}
\label{fractideallem}
If $I$ is an invertible
fractional $\Z\langle G\rangle$-ideal, then:
\begin{enumerate}
\item
if $m\in\Z_{>0}$, 
then $I/mI$ is isomorphic to $(\Z/m\Z)\langle G\rangle$ as a $\Z\langle G\rangle$-module;
\item
$I$ is flat;
\item
if $I'$ is a fractional $\Z\langle G\rangle$-ideal, then the
natural surjective map 
$$
I\otimes_{\Z\langle G\rangle} I' \to II'
$$ 
is an isomorphism.
\end{enumerate}
\end{lem}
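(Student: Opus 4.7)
The plan is to establish (ii) first, then use flatness to derive (iii), and finally deduce (i) by reducing modulo $m$ and exploiting the structure of the finite ring $R/mR$, where $R=\Z\langle G\rangle$.

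For (ii), I invoke the definition to pick a fractional $\Z\langle G\rangle$-ideal $J$ with $IJ=R$. Since $1\in R=IJ$, there exist $x_1,\dots,x_k\in I$ and $y_1,\dots,y_k\in J$ with $\sum x_iy_i=1$. The map $\phi:R^k\to I$ given by $(a_1,\dots,a_k)\mapsto\sum a_ix_i$ admits the section $z\mapsto(zy_1,\dots,zy_k)$, noting that $zy_i\in IJ=R$ and that $\phi$ applied to this tuple returns $z\cdot\sum x_iy_i=z$. Hence $I$ is a direct summand of the free $R$-module $R^k$, and is therefore projective; projective modules over any commutative ring are flat.

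For (iii), the multiplication map $\mu:I\otimes_R I'\to II'\subset\Q\langle G\rangle$ is surjective by construction, so it suffices to show it is injective. The inclusion $I'\hookrightarrow \Q\langle G\rangle$, tensored over $R$ with the flat module $I$ from (ii), produces an injection $I\otimes_R I'\hookrightarrow I\otimes_R\Q\langle G\rangle$. Because $\Q\langle G\rangle=R\otimes_\Z\Q$, this target equals $I\otimes_\Z\Q=\Q I$, which is all of $\Q\langle G\rangle$ since $I$ spans $\Q\langle G\rangle$ over $\Q$ by Definition \ref{fractidealdefn}. Under this identification, the inclusion becomes $\mu$ itself; hence $\mu$ is injective.

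For (i), I apply (iii) with $I'=J$ to obtain $I\otimes_R J\cong IJ=R$, and then tensor over $R$ with $R/mR$ to get
$$(I/mI)\otimes_{R/mR}(J/mJ)\;\cong\;R/mR.$$
The ring $R/mR=(\Z/m\Z)\langle G\rangle$ is finite, hence Artinian, and therefore splits as a finite product $\prod_i A_i$ of local Artinian rings. The modules $I/mI$ and $J/mJ$ split accordingly, and in each local factor $A_i$ the above isomorphism gives $(I/mI)_i\otimes_{A_i}(J/mJ)_i\cong A_i$. Since $I$ is projective over $R$ by (ii), $(I/mI)_i$ is projective over $A_i$, and projective modules over local rings are free; write $(I/mI)_i\cong A_i^{r_i}$ and similarly $(J/mJ)_i\cong A_i^{s_i}$. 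Comparing ranks in $A_i^{r_is_i}\cong A_i$ forces $r_i=s_i=1$, so $(I/mI)_i\cong A_i$ for every $i$. Assembling the factors yields $I/mI\cong R/mR$, as required.

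The principal obstacle is (i); once one notices that $R/mR$ is semilocal and uses the local-implies-free principle for projective modules, the argument runs smoothly. Parts (ii) and (iii) are essentially the formal framework setting up the use of flatness and of the embedding into $\Q\langle G\rangle$.
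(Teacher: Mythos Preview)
Your proof is correct, but the order and method differ from the paper's. The paper proves (i) first by a direct construction: using the bijection $N\mapsto NI$ on fractional ideals it identifies the maximal $\Z\langle G\rangle$-submodules of $I$ as the $\mathfrak{m}I$, then uses the Chinese Remainder Theorem to produce an element $x\in I$ lying outside every $\mathfrak{m}I$ with $m\in\mathfrak{m}$, so that $x$ generates $I/mI$. Part (ii) is then obtained by feeding the cyclicity of $I/mI$ into Lemma~\ref{flatproj}. You instead go straight to projectivity via the standard dual-basis argument from $\sum x_iy_i=1$, which is cleaner and avoids any appeal to Lemma~\ref{flatproj}; and for (i) you pass to the finite ring $R/mR$, split it into local factors, and use that finitely generated projectives over local rings are free together with a rank count. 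The paper's route has the advantage of actually exhibiting a generator of $I/mI$ (which matches the algorithmic spirit of the surrounding sections), whereas your route is a pure existence argument but is entirely self-contained. Your treatment of (iii) is essentially identical to the paper's.
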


\begin{proof}
Since $I$ is an invertible
fractional $\Z\langle G\rangle$-ideal, there is a
fractional $\Z\langle G\rangle$-ideal $J$ such that
$IJ = \Z\langle G\rangle$.
Let $\FFF$ denote the partially ordered 
set of fractional $\Z\langle G\rangle$-ideals.
The maps from $\FFF$ to itself defined by $f_1: N \mapsto NI$ and
$f_2: N \mapsto NJ$
are inverse bijections that preserve inclusions.
Since $f_1(\Z\langle G\rangle) = I$, 
it follows that the maximal $\Z\langle G\rangle$-submodules 
of $I$
are exactly the $\m I$ such that 
$\m$ is a maximal ideal of $\Z\langle G\rangle$. 
By the Chinese Remainder Theorem, the map
$I \to \prod_{\m} I/\m I$ 
is surjective,
where the product runs over the (finitely many) maximal ideals $\m$ that contain $m$.
It follows that there exists $x\in I$ 
that is not contained in any $\m I$.
Since $\Z\langle G\rangle x + m I$ is a fractional ideal that is not contained in any
proper submodule of $I$, it equals $I$. Thus,
$I/mI$ is isomorphic to $(\Z/m\Z)\langle G\rangle$ as a $\Z\langle G\rangle$-module.
This proves (i).

For (ii), apply (i) and Lemma \ref{flatproj}(ii).

Since $I$ is flat, the natural map
$$
I \otimes_{\Z\langle G\rangle} I' \to I \otimes_{\Z\langle G\rangle} \Q\langle G\rangle
\cong I \otimes_{\Z\langle G\rangle} \Z\langle G\rangle \otimes_{\Z} \Q 
\cong I \otimes_{\Z} \Q = \Q\langle G\rangle
$$
is injective, giving (iii).
\end{proof}

Let $L_\Q = L \otimes_\Z \Q$. Then the inner product $\langle \,\, , \,\, \rangle$
on $L$ extends $\Q$-bilinearly to a $\Q$-bilinear, symmetric, positive definite 
inner product on $L_\Q$, and the lifted inner product
$\cdot$ extends $\Q$-bilinearly to a $\Q\langle G\rangle$-bilinear map 
$\cdot$ from
$L_\Q \times \overline{L_\Q}$ to $\Q\langle G\rangle$.

\begin{lem}
\label{aimpliesdlem}
Suppose $L$ is an invertible $G$-lattice. Then
$L_\Q = \Q\langle G\rangle \gamma$ for some $\gamma \in L_\Q$.
For such a $\gamma$, letting 
$z = \gamma\cdot\overline{\gamma} \in \Q\langle G\rangle$ we have:
\begin{enumerate}
\item
$\langle a\gamma,b\gamma\rangle = t(a\overline{b}z)$
for all $a,b\in \Q\langle G\rangle$,
\item
$z  \in \Q\langle G\rangle^\ast$,
\item
for all $\psi\in\Psi$ we have $\psi(z)\in\R_{>0}$,
\item
$L \cdot\overline{L} = \Z\langle G\rangle$,
\item
if $I = \{ x\in \Q\langle G\rangle : x\gamma\in L\}$, then
$I\overline{I} = \Z\langle G\rangle z^{-1}$ and as $G$-lattices
we have 
$L_{(I,z^{-1})} \cong L$.
\end{enumerate}
\end{lem}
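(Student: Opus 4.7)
The plan starts with producing the generator $\gamma$. Applying condition (iii) of Definition \ref{invertdef2} with $m = 2$ gives $e_2 \in L$ such that $\Z\langle G\rangle\cdot e_2$ has finite odd index in $L$ by Lemma \ref{inviiilem}; tensoring with $\Q$ then yields $L_\Q = \Q\langle G\rangle\cdot e_2$, so any element whose $\Q\langle G\rangle$-span equals $L_\Q$ (e.g.\ $e_2$ itself) can serve as $\gamma$. With $\gamma$ fixed, part (i) is a formal computation: $(a\gamma)\cdot\overline{b\gamma} = a\bar b(\gamma\cdot\overline\gamma) = a\bar b z$ by \eqref{tdotQeqn}, and applying $t$ gives the claim via \eqref{tdoteqn}. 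For (iii), I first observe that $z=\bar z$ since $x\cdot\overline y = \overline{y\cdot\overline x}$, so $\psi(z)\in\R$ for each $\psi\in\Psi$ by Lemma \ref{psimisclem}(i),(iii). Extending (i) and Lemma \ref{psimisclem}(ii) by scalars to $\R\langle G\rangle$ writes $\langle x\gamma, x\gamma\rangle = \frac{1}{n}\sum_{\psi}\psi(x\bar x z)$ for every $x\in\R\langle G\rangle$; since $L_\R = \R\langle G\rangle\cdot\gamma$ is free of rank one over $\R\langle G\rangle$ and the inner product is positive definite, this sum is strictly positive whenever $x\neq 0$. Specializing $x$ to a unit vector in each factor of $\R\langle G\rangle\cong\R^r\times\C^s$, exactly as in the proof of Lemma \ref{psimisclem}(vii), then forces each individual $\psi(z)$ to be strictly positive. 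Part (ii) is immediate from (iii) via Lemma \ref{psimisclem}(vi).

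The main obstacle will be (iv), upgrading $L\cdot\overline L$ from a finite-index ideal of $\Z\langle G\rangle$ to the full ring. My plan is to show that the finite cokernel $C=\Z\langle G\rangle/(L\cdot\overline L)$ satisfies $C/pC=0$ for every prime $p$. Invertibility at level $m=p$ produces $e_p\in L$ yielding identifications $L/pL\cong(\Z/p)\langle G\rangle$ via $e_p\mapsto 1$, and similarly $\overline L/p\overline L\cong(\Z/p)\langle G\rangle$. Under these identifications the reduction mod $p$ of the lifted-inner-product map $L\otimes_{\Z\langle G\rangle}\overline L\to\Z\langle G\rangle$ becomes multiplication by $\alpha_p:=(e_p\cdot\overline{e_p})\bmod p$, so $C/pC$ is the cokernel of that multiplication. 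The mod-$p$ version of (i) identifies the bilinear form on $L/pL$ with $(a,b)\mapsto t(a\bar b\alpha_p)$, and unimodularity of $L$ forces this pairing to be non-degenerate modulo $p$; applying Lemma \ref{miscAlem}(ii)(e) to $a\alpha_p$ then shows $\alpha_p$ is not a zero-divisor in the finite ring $(\Z/p)\langle G\rangle$, hence is a unit. Therefore $C/pC=0$ for each $p$, giving $C=0$.

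Statement (v) is then a clean packaging of the preceding. The module $I=\{x\in\Q\langle G\rangle : x\gamma\in L\}$ satisfies $I\gamma = L$; finite generation over $\Z\langle G\rangle$ and $\Q$-spanning of $\Q\langle G\rangle$ are immediate from the corresponding properties of $L$ inside $L_\Q=\Q\langle G\rangle\gamma$, so $I$ is a fractional $\Z\langle G\rangle$-ideal. A direct computation in $\Q\langle G\rangle$, using that $\overline L=\bar I\,\overline\gamma$ inside $\overline{L_\Q}=\Q\langle G\rangle\overline\gamma$, gives $L\cdot\overline L = I\bar I z$, and (iv) then upgrades this to $I\bar I = \Z\langle G\rangle z^{-1}$. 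Together with (iii), the hypotheses of Theorem \ref{Iwmisclem} needed to form $L_{(I,z^{-1})}$ are now in place, and the $\Z\langle G\rangle$-linear bijection $I\to L$, $a\mapsto a\gamma$ respects the inner products by (i), yielding the promised $G$-lattice isomorphism $L_{(I,z^{-1})}\isom L$.
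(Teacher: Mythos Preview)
Your argument is correct, and your treatment of (i), (ii), (iii), (v) is essentially the paper's (you streamline (iii) slightly by specializing $x$ to the unit idempotents in $\R\langle G\rangle\cong\R^r\times\C^s$ and reading off strict positivity directly, whereas the paper first obtains $\psi(z)\ge 0$ via Lemma~\ref{psimisclem}(vii) and then upgrades to $>0$ after proving $z$ is a unit).

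The genuine difference is in (iv). The paper introduces the ``dual'' $L^{-1}=\{\bar y\in\overline{L}_\Q : L\cdot\bar y\subset\Z\langle G\rangle\}$, exhibits for each $m>1$ an explicit element showing the index $i(m)=(L:\Z\langle G\rangle e_m)$ lies in $L\cdot L^{-1}$, deduces $1\in L\cdot L^{-1}$ since the $i(m)$ are pairwise coprime, and then uses unimodularity to identify $\overline L$ with $L^{-1}$. You instead work prime by prime: for each $p$ you use $e_p$ to trivialize $L/pL$ and $\overline L/p\overline L$, observe that the image of $L\cdot\overline L$ in $(\Z/p)\langle G\rangle$ is the ideal generated by $\alpha_p=e_p\cdot\overline{e_p}\bmod p$, and then use unimodularity in the form ``the Gram pairing is nondegenerate mod $p$'' to force $\alpha_p$ to be a unit. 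Both routes rest on the same two inputs (the $e_m$'s and unimodularity); yours avoids introducing $L^{-1}$ and is arguably more direct, while the paper's formulation makes the identity $\overline L=L^{-1}$ explicit, which is conceptually pleasant and reusable. One small point worth stating in your write-up: when you invoke ``the mod-$p$ version of (i)'' you are applying (i) with $e_p$ in place of the originally chosen $\gamma$, which is legitimate since $e_p$ also generates $L_\Q$ over $\Q\langle G\rangle$ by Lemma~\ref{inviiilem}.
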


\begin{proof}
By Definition \ref{invertdef2}(iii) 
and Lemma \ref{inviiilem}(i) 
we have that for all $m\in\Z_{>1}$ there exists $e_m\in L$
such that the index 
$i(m)=(L : \Z\langle G\rangle e_m)$ 
is finite
and coprime to $m$.
It follows that $\Q\langle G\rangle \cong L_\Q$ as $\Q\langle G\rangle$-modules.
Let $\gamma\in L_\Q$ be the image of $1$ under such an isomorphism
$\Q\langle G\rangle \isom L_\Q$.
Then 
$L_\Q = \Q\langle G\rangle \gamma.$
Let 
$$
z = \gamma\cdot\overline{\gamma} \in \Q\langle G\rangle.
$$

By \eqref{tdotQeqn} and  \eqref{tdoteqn},
for all $a,b\in \Q\langle G\rangle$ we have 
$$(a\gamma)\cdot (\overline{b\gamma}) =
a(\gamma\cdot (\overline{b}\overline{\gamma})) =
a\overline{b}(\gamma\cdot \overline{\gamma}) =
 a\overline{b}z$$
and thus 
$$
\langle a\gamma,b\gamma\rangle =
t((a\gamma)\cdot(\overline{b\gamma})) = t(a\overline{b}z),
$$ 
giving (i). 
Since the inner product on $L_\Q$ is symmetric, using Lemma \ref{miscAlem}(ii)(e)
we have $\bar{z}=z$.
Thus for all $\psi\in\Psi$ we have $$\psi(z) = \psi(\bar{z}) = \overline{\psi(z)}$$
by Lemma \ref{psimisclem}(i), so $\psi(z) \in \R$.
For all $a\in \Q\langle G\rangle$ we have 
$$0 \le \langle a\gamma,a\gamma\rangle =
t(a\overline{a}z) = \frac{1}{n}\sum_{\psi\in\Psi} \psi(a\overline{a}z)$$ by Lemma \ref{psimisclem}(ii). 
By Lemma \ref{psimisclem}(vii) it follows that $\psi(z)\ge 0$ for all
$\psi\in\Psi$.
If $a\in \Q\langle G\rangle$ and $za=0$, then 
$$\langle a\gamma,a\gamma\rangle=t(a\overline{a}z)=0,$$ so $a=0$.
Therefore multiplication by $z$ is an injective, and thus surjective,
map from $\Q\langle G\rangle$ to itself. Thus $z\in \Q\langle G\rangle^\ast$
and $\psi(z)\in\R_{> 0}$ for all $\psi\in\Psi$, by Lemma \ref{psimisclem}(vi).
This gives (ii) and (iii).

Define $$L^{-1} = 
\{ \overline{y} \in \overline{L}_\Q : L\cdot\overline{y} \subset \Z\langle G\rangle \}$$
and let $m\in\Z_{>1}.$
We have 
$$
L \supset \Z\langle G\rangle e_m \supset i(m)L,
$$ 
so 
$e_m\in \Q\langle G\rangle^\ast\gamma$ and therefore
$e_m\cdot\overline{e_m}\in \Q\langle G\rangle^\ast$.
Now $$i(m)(e_m\cdot\overline{e_m})^{-1}\overline{e_m} \in L^{-1},$$
because for all $x\in L$ one has 
$$i(m)x\cdot(e_m\cdot\overline{e_m})^{-1}\overline{e_m} \subset
\Z\langle G\rangle e_m\cdot(e_m\cdot\overline{e_m})^{-1}\overline{e_m} = 
\Z\langle G\rangle.$$
Therefore $$i(m) = e_m\cdot i(m)(e_m\cdot\overline{e_m})^{-1}\overline{e_m}
\in L\cdot L^{-1} \subset \Z\langle G\rangle.$$
This is true for all $m\in\Z_{>1}$, so $1\in L\cdot L^{-1}$ and
$L\cdot L^{-1} = \Z\langle G\rangle$.

Now for $\overline{y} \in \overline{L}_\Q$ one has
$\overline{y} \in \overline{L}$ if and only if $y\in L$, 
if and only if for all $x\in L$ one has $\langle x,y\rangle\in\Z$,
if and only if for all $x\in L$ and $\sigma\in G$ one has 
$\langle x,\sigma y\rangle = \langle \sigma^{-1}x, y\rangle \in\Z$,
if and only if for all $x\in L$ one has $x\cdot \overline{y}\in\Z\langle G\rangle$,
if and only if $\overline{y} \in {L}^{-1}$.
So $\overline{L} = {L}^{-1}$. Thus $L\cdot\overline{L} = \Z\langle G\rangle$,
giving (iv).

If $I\subset \Q\langle G\rangle$ is such that $L=I\gamma$,
then $I\isom L$, $x\mapsto x\gamma$ as $\Z\langle G\rangle$-modules.
Then $$\Z\langle G\rangle = L\cdot\overline{L} = 
I\overline{I}\gamma\cdot\overline{\gamma}
= I\overline{I}z,$$ so 
$I\overline{I} = \Z\langle G\rangle z^{-1}.$
Now $$\langle x\gamma,y\gamma\rangle = t(x\gamma\cdot \overline{y\gamma})=
t(x \overline{y}z) = \langle x,y\rangle_{I,z^{-1}}$$ for all $x,y\in I$. Thus, 
$L_{(I,z^{-1})} \cong L$ as $G$-lattices.
This gives (v).
\end{proof}

We are now ready to prove Theorem \ref{invertequiv}.

For (a) $\Rightarrow$ (d), apply Lemma \ref{aimpliesdlem} with $w=z^{-1}$.

For (d) $\Rightarrow$ (b), by (d) we have  
$L\otimes_{\Z\langle G\rangle}\overline{L} = 
I\otimes_{\Z\langle G\rangle}\overline{I}.$
Using Lemma \ref{fractideallem}(iii) we have that the  composition
$$
I\otimes \overline{I} \isom I\overline{I}=\Z\langle G\rangle w \isom \Z\langle G\rangle
$$
is an isomorphism,
where the first map sends $x\otimes y$ to $x\overline{y}$ and
the last map sends $\alpha$ to $\alpha/w$.
Since $x \cdot \overline{y} = {x\overline{y}}/{w}$,
this gives (b).

For (b) $\Rightarrow$ (c),
suppose (b) holds, i.e., the map 
$$\varphi : L \otimes_{\Z\langle G\rangle} \overline{L} \to \Z\langle G\rangle, \quad
x\otimes \overline{y} \mapsto x \cdot \overline{y}$$
is an isomorphism of $\Z\langle G\rangle$-modules.
Then $L$ is unimodular, as follows.
Consider the maps:
$$
L \to \Hom_{\Z\langle G\rangle}(\overline{L},\Z\langle G\rangle)
\to \Hom(\overline{L},\Z)
\to \Hom({L},\Z)
$$
where the left-hand map is the $\Z\langle G\rangle$-module isomorphism
induced by $\varphi$, defined by $x \mapsto (\bar{y} \mapsto x \cdot \overline{y})$,
the middle map is $f\mapsto t\circ f$, 
and the right-hand map is $g \mapsto (y\mapsto g(\bar{y}))$.
The latter two maps are group isomorphisms; for the middle map
note that its inverse is 
$$\hat{f}\mapsto (\overline{x} \mapsto \sum_{\sigma\in S} \hat{f}(\sigma^{-1}\overline{x})\sigma).$$
The composition, which takes $x$ to 
$$
(y \mapsto t(x \cdot \overline{y}) = \langle x,y\rangle),
$$ 
is therefore a bijection, so $L$ is unimodular.
Then (c) holds by taking $M=\overline{L}$.

For (c) $\Rightarrow$ (a), by Lemma \ref{psimisclem}(v)
we have 
$\Q\langle G\rangle \cong \prod_{j\in J}K_j$ 
with $\# J < \infty$
and fields $K_j$. 
Each $\Q\langle G\rangle$-module $V$ is $V = \prod_{j\in J}V_j$
with each $V_j$ a $K_j$-vector space.
With $V = L\otimes_\Z \Q$ and $W = M\otimes_\Z \Q$
we have 
$$
\prod_{j\in J} (V_j\otimes_{K_j} W_j)  = V\otimes_{\Q\langle G\rangle} W  
\cong \Q\langle G\rangle \cong \prod_{j}K_j.
$$
This holds if and only if for all $j$ we have 
$$
(\dim_{K_j} V_j)(\dim_{K_j} W_j) = 1,
$$ 
which holds
if and only if for all $j$ we have 
$$
\dim_{K_j} V_j = \dim_{K_j} W_j = 1.
$$
This holds if and only if $V \cong W \cong \Q\langle G\rangle$
as $\Q\langle G\rangle$-modules.
Thus, $L$ and $M$ may be viewed as fractional $\Z\langle G\rangle$-ideals in
$\Q\langle G\rangle$, and $LM$ is principal, so $L$ and $M$ are invertible
fractional $\Z\langle G\rangle$-ideals.
By Lemma \ref{fractideallem}(i), if $I$ is an invertible
fractional $\Z\langle G\rangle$-ideal, then $I/mI$ is cyclic as
a $\Z\langle G\rangle$-module, for every positive integer $m$.
Thus $L/mL$ is cyclic as
a $\Z\langle G\rangle$-module, so (a) holds.

This concludes the proof of Theorem \ref{invertequiv}.

\section{Short vectors in invertible lattices}
\label{shorttensorsect}
Recall that $G$ is a group of order $2n$ equipped with an element
$u$ of order $2$. 
The main result of this section is Theorem \ref{shortthm}, which shows
in particular that a $G$-lattice is $G$-isomorphic to the standard $G$-lattice
if and only if it is invertible and has a short vector 
(i.e., a vector of length $1$).

\begin{defn}
We will say that a vector $e$ in an integral lattice $L$ is {\bf short} if 
$\langle e,e \rangle = 1.$
\end{defn}

\begin{ex}
\label{shortex}
The short vectors in the standard lattice of rank $n$ are the $2n$
signed standard basis vectors 
$$
\{ (0,\ldots,0,\pm 1,0,\ldots,0) \}.
$$
Thus, the set of short vectors in $\Z\langle G\rangle$ is $G$.
\end{ex}

\begin{prop}
\label{eshortlem}
Suppose $L$ is an invertible $G$-lattice. Then:
\begin{enumerate}

\item
if $e$ is short, then
$\{ \sigma\in G : \sigma e = e\} = \{ 1\}$;
\item
if $e$ is short, then
$$
\langle e,\sigma e\rangle =
\begin{cases}
\phantom{-}1 & \text{ if $\sigma =1$,} \\
-1 & \text{ if $\sigma = u$,} \\ 
\phantom{-}0  & \text{ for all other $\sigma\in G$};
\end{cases}
$$
\item
 $e \in L$ is short if
and only if $e \cdot \overline{e} = 1$, with inner product $\cdot$
defined in Definition \ref{dotproddef}.
\end{enumerate}
\end{prop}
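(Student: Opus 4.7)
The direction ``$\Leftarrow$'' in~(iii) is immediate: applying $t$ gives $\langle e,e\rangle = t(e\cdot\overline{e}) = t(1) = 1$. For the remaining assertions, my plan is to prove~(i) first; then (ii) follows from~(i) together with a short Cauchy--Schwarz analysis, and (iii)~``$\Rightarrow$'' reduces to a direct computation of $e\cdot\overline{e}$.

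Assume $e$ is short. By Cauchy--Schwarz and the integrality of $\langle\cdot,\cdot\rangle$, we have $\langle e,\sigma e\rangle \in \{-1,0,1\}$ for each $\sigma\in G$, and $\pm 1$ occurs iff $\sigma e = \pm e$. Let $H := \{\sigma\in G : \sigma e = e\}$. Then $u\notin H$ (else $e = -e$ gives $e=0$), and $\langle e,\sigma e\rangle$ equals $+1$ on $H$, $-1$ on $uH$, and $0$ elsewhere. Converting to $\Z\langle G\rangle = \Z[G]/(u+1)$, where $-u\tau$ is identified with $\tau$, yields
\[
  s \;:=\; e\cdot\overline{e} \;=\; \sum_{\sigma\in S}\langle e,\sigma e\rangle\,\sigma \;=\; \sum_{h\in H} h.
\]

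For~(i), suppose $|H|>1$ for contradiction. The $G$-orbit of $e$ consists of $2n/|H|$ unit vectors forming $n/|H|$ antipodal pairs, pairwise orthogonal (as non-proportional short vectors in an integral lattice must be). Their $\Z$-span is $M := \Z\langle G\rangle\cdot e$, unimodular of rank $n/|H|$ with an orthonormal basis chosen one-per-pair; unimodularity of $L$ forces $M = (M\otimes\Q)\cap L$ (any element of the left side has integer inner product with each basis vector, hence integer coefficients). Hence $L = M\oplus M^\perp$ as $G$-lattices. By Theorem~\ref{invertequiv}(b), the map $\varphi: L\otimes_{\Z\langle G\rangle}\overline{L}\to\Z\langle G\rangle$, $x\otimes\overline{y}\mapsto x\cdot\overline{y}$, is a $\Z\langle G\rangle$-module isomorphism. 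Distributing over the decomposition, the cross terms $M\otimes\overline{M^\perp}$ and $M^\perp\otimes\overline{M}$ map to $0$ under $\varphi$ (since $M^\perp$ is $G$-stable, $x\cdot\overline{y} = \sum_\sigma\langle x,\sigma y\rangle\sigma = 0$ when $x\in M$, $y\in M^\perp$); by injectivity of $\varphi$ these summands must vanish, yielding $\Z\langle G\rangle = (s)\oplus(s')$ as ideals, with $s'$ the analogous element for $M^\perp$.

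Any such ideal direct-sum comes from an idempotent $f\in\Z\langle G\rangle$ with $(s) = f\Z\langle G\rangle$. Over $\Q$, $s = |H|\,\epsilon_H$ where $\epsilon_H := |H|^{-1}\sum_{h\in H}h$ is an idempotent of $\Q\langle G\rangle$, so $(s)\otimes\Q = \epsilon_H\,\Q\langle G\rangle = f\,\Q\langle G\rangle$. Two idempotents of a commutative ring generating the same ideal must coincide, so $f = \epsilon_H \in \Z\langle G\rangle$. In the $\Z$-basis $S$, however, the nonzero coefficients of $\epsilon_H$ equal $\pm 1/|H|$, integers only when $|H|=1$: contradiction. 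Hence $H = \{1\}$, proving~(i). Claim~(ii) is then the Cauchy--Schwarz analysis restated using $H = \{1\}$, and (iii)~``$\Rightarrow$'' becomes $s = \sum_{h\in\{1\}}h = 1$. The main obstacle is the passage from invertibility of $L$ to the arithmetic constraint on $f$: using the tensor-product characterization of invertibility to extract an idempotent $f = \epsilon_H$ of $\Z\langle G\rangle$, and then ruling out $|H|>1$ via the explicit shape of $\epsilon_H$ in the $S$-basis.
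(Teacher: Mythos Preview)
Your proof is correct and takes a genuinely different route from the paper's.

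The paper argues directly from Definition~\ref{invertdef2}(iii): with $m=|H|$, the module $L/mL$ is free of rank one over $(\Z/m\Z)\langle G\rangle$, and since $e+mL$ is $H$-fixed it lies in $\bigl(\sum_{\tau\in H}\tau\bigr)(L/mL)$. Writing $e=m\varepsilon_1+\bigl(\sum_{\tau\in H}\tau\bigr)\varepsilon_2$ and expanding $\langle e,e\rangle$ gives $1\equiv 0\pmod m$, so $m=1$. Your route instead invokes Theorem~\ref{invertequiv}(b): the orthogonal $G$-splitting $L=M\oplus (M^\perp\cap L)$ with $M=\Z\langle G\rangle e$ forces, via injectivity of $\varphi$, an ideal decomposition $\Z\langle G\rangle=(s)\oplus J'$ with $s=\sum_{h\in H}h$; the associated idempotent must equal $\epsilon_H=|H|^{-1}s$, whose coefficients $\pm|H|^{-1}$ in the $S$-basis are integral only when $|H|=1$.

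The paper's argument is shorter and closer to the raw definition, while yours makes the obstruction visible as the non-integrality of the averaging idempotent~$\epsilon_H$, which is a pleasant structural explanation. Two cosmetic remarks: you should write the complement as $M^\perp\cap L$ throughout, and $M^\perp\cap L$ need not be $\Z\langle G\rangle$-cyclic, so there is no literal ``analogous element $s'$''---but your argument only uses that $\varphi(N\otimes\overline{N})$ is an ideal complementary to~$(s)$, which you have.
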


\begin{proof}
Suppose  $e \in L$ is short.
Let 
$$
H = \{ \sigma\in G : \sigma e = e\}.
$$ 
For all $\sigma\in G$, by the Cauchy-Schwarz inequality we have
$$
|\langle e,\sigma e\rangle | \le 
(\langle e, e\rangle \langle \sigma e,\sigma e\rangle)^{1/2}
= \langle e, e\rangle = 1,
$$
and 
$|\langle e,\sigma e\rangle | = 1$ if and only if
$e$ and $\sigma e$ lie on the same line through $0$.
Thus 
$$
\langle e,\sigma e\rangle \in \{ 1,0,-1\}.
$$
Then $\langle e,\sigma e\rangle = 1$ if and only if 
$\sigma\in H$.  
Also, $\langle e,\sigma e\rangle = -1$ if and only if 
$\sigma e = -e$ if and only if $\sigma\in Hu$.
Otherwise, $\langle e,\sigma e\rangle = 0$. 
Thus for (i,ii), it suffices to prove $H = \{ 1\}$.
Let $m=\#H$.

Let $T$ be a set of coset representatives for $G$ mod $H\langle u\rangle$
and let $S=T\cdot H$, a set of coset representatives for $G$ mod $\langle u\rangle$.
If $$a=\sum_{\sigma\in S}a_\sigma \sigma \in (\Z/m\Z)\langle G\rangle$$
is fixed by $H$,
then $a_{\tau\sigma}= a_\sigma$ 
for all $\sigma\in S$ and $\tau\in H$, so
$$a \in \left(\sum_{\tau\in H} \tau\right)(\Z/m\Z)\langle G\rangle.$$
By Definition \ref{invertdef2}, Theorem \ref{invertequiv}, and Lemma \ref{fractideallem},
there is a $\Z[H]$-module isomorphism
$$
L/mL \cong (\Z/m\Z)\langle G\rangle.
$$
Since $e + mL$ is fixed by $H$, we have
$$e + mL \in \left(\sum_{\tau\in H} \tau\right)(L/mL),$$ so 
$e_m\in mL + (\sum_{\tau\in H} \tau)L$.
Write
$$
e= m\varepsilon_1 + \left(\sum_{\tau\in H} \tau\right)\varepsilon_2
$$
with $\varepsilon_1, \varepsilon_2\in L$.
Since 
$$
\langle e, \tau\varepsilon_2\rangle = \langle \tau e, \tau\varepsilon_2\rangle
= \langle e, \varepsilon_2\rangle
$$ 
for all $\tau\in H$, we have
$$
1 = \langle e, e\rangle = 
m\langle e, \varepsilon_1\rangle + \sum_{\tau\in H} \langle e, \tau\varepsilon_2\rangle
= m\langle e, \varepsilon_1 + \varepsilon_2\rangle \equiv 0 \mod{m}.
$$
Thus, $m=1$ as desired. Part (iii) follows directly from (ii) and
Definition \ref{dotproddef}.
\end{proof}

This enables us to prove the following result.

\begin{thm}
\label{shortthm}
Suppose $L$ is a $G$-lattice. Then:
\begin{enumerate}

\item
if $L$ is invertible, then the map 
$$
\{ G\text{-isomorphisms $\Z\langle G\rangle \to L \} \to \{$short vectors of $L$\}}
$$
that sends $f$ to $f(1)$
is bijective;
\item
if $e \in L$ is short and $L$ is invertible, then 
$\{ \sigma e : \sigma \in G\}$ generates the abelian group $L$;
\item
 $L$ is $G$-isomorphic to $\Z\langle G\rangle$
if and only if $L$ is invertible and has a short vector;
\item
if $e \in L$ is short and $L$ is invertible, 
then the map 
$$
G \to \{\text{short vectors of $L$\}}, \qquad
\sigma \mapsto \sigma e
$$ 
is bijective.
\end{enumerate}
\end{thm}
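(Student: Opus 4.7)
The plan is to establish (i) first, from which the remaining three parts follow with little additional work. For well-definedness of the map in (i), if $f : \Z\langle G\rangle \to L$ is a $G$-isomorphism, then $\langle f(1), f(1)\rangle = \langle 1, 1\rangle_{\Z\langle G\rangle} = t(1) = 1$, so $f(1)$ is short. Injectivity is immediate because $\Z\langle G\rangle$ is generated as a module over itself by $1$, so a $\Z\langle G\rangle$-linear map from $\Z\langle G\rangle$ is determined by its value at $1$.

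For surjectivity of the map in (i), I would take a short vector $e \in L$ and define $f : \Z\langle G\rangle \to L$ by $a \mapsto ae$. This is tautologically $\Z\langle G\rangle$-linear. By Proposition \ref{eshortlem}(iii) we have $e \cdot \overline{e} = 1$, and then the bilinearity properties of the lifted inner product from Definition \ref{dotproddef} yield
\[
\langle ae, be\rangle = t\bigl((ae) \cdot \overline{be}\bigr) = t\bigl(a\overline{b}(e \cdot \overline{e})\bigr) = t(a\overline{b}) = \langle a, b\rangle_{\Z\langle G\rangle},
\]
so $f$ is an isometry. In particular $f$ is injective, so $f(\Z\langle G\rangle)$ is a sublattice of $L$ of rank $n$; being isometric to $\Z\langle G\rangle$ it has determinant $1$, and since $L$ is unimodular (part of invertibility, Definition \ref{invertdef2}(ii)) the index $[L : f(\Z\langle G\rangle)]$ equals $1$, forcing $f$ to be a bijection, hence a $G$-isomorphism.

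Part (ii) is then immediate: the set $S \subset G$ generates $\Z\langle G\rangle$ as an abelian group, so its image under the $G$-isomorphism of (i) generates $L$, and that image is contained in $\{\sigma e : \sigma\in G\}$. For (iii), one direction combines Lemma \ref{stdGex} with the observation above that $f(1)$ is short; the converse is precisely the surjectivity in (i). For (iv), the map is well-defined since $G$ acts by isometries, and injective by Proposition \ref{eshortlem}(i) (if $\sigma e = \tau e$ then $\sigma^{-1}\tau$ stabilizes $e$, forcing $\sigma = \tau$). Surjectivity follows by pulling back any short $e' \in L$ through $f^{-1}$ from (i) to obtain a short vector of $\Z\langle G\rangle$, which by Example \ref{shortex} lies in $G$; then $e' = f(\sigma) = \sigma f(1) = \sigma e$.

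The single nonformal step is the surjectivity argument in (i), where the isometry $f$ must be shown to cover all of $L$ rather than just a finite-index sublattice. This is where the unimodularity clause of invertibility enters decisively; the bilinearity identity for $\cdot$ combined with $e \cdot \overline{e} = 1$ forces $f$ to be an isometry, but only unimodularity of $L$ converts this isometry into a bijection.
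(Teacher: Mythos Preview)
Your proof is correct and follows essentially the same line as the paper's. The only cosmetic difference is that for the surjectivity in (i) you invoke Proposition~\ref{eshortlem}(iii) and the bilinearity of the lifted inner product to show $f$ is an isometry, whereas the paper reaches the same conclusion via Proposition~\ref{eshortlem}(ii) directly (observing that $\{\sigma e\}_{\sigma\in S}$ is orthonormal); both routes then use unimodularity of $L$ to upgrade the isometry to a bijection, and the remaining parts are handled identically.
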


\begin{proof}
For (i), that $f(1)$ is short is clear.
Injectivity of the map $f\mapsto f(1)$ follows from 
$\Z\langle G\rangle$-linearity of $G$-isomorphisms.
For surjectivity, suppose $e\in L$ is short.
Proposition \ref{eshortlem}(ii) says that $\{ \sigma e\}_{\sigma\in S}$
is an orthonormal basis for $L$.
Parts (ii) and (i) now follow, where the $G$-isomorphism $f$
is defined by $x \mapsto xe$ for all $x\in \Z\langle G\rangle$.
Part (iii) follows from (i) and Lemma \ref{stdGex}.
Part (iv) is trivial for $\Z\langle G\rangle$, and
$L$ is $G$-isomorphic to $\Z\langle G\rangle$, so we have (iv).
\end{proof}

\section{Tensor products of $G$-lattices}
\label{tensoringsect}

Recall that $G$ is a finite abelian group with an element $u$ of order $2$.
We will define the tensor product of  invertible $G$-lattices, 
and derive some properties.
See \cite{AtiyahMcD,LangAlg} for background on tensor products.
 
\begin{defn}
\label{tensoringGdef}
Suppose that $L$ and $M$ are invertible $G$-lattices.
Define the $\Z\langle G\rangle$-bilinear map  
$$\cdot: (L \otimes_{\Z\langle G\rangle} M) \times 
(\overline{L} \otimes_{\Z\langle G\rangle} \overline{M})
\to \Z\langle G\rangle, \quad (a, \overline{b}) \mapsto a \cdot \overline{b}$$ 
by letting
$$(x \otimes v) \cdot (\overline{y} \otimes \overline{w}) = 
(x \cdot \overline{y})(v \cdot \overline{w})$$
for all $x, y \in L$ and $v, w \in M$ and extending
$\Z\langle G\rangle$-bilinearly.
Take 
$$
\overline{L \otimes_{\Z\langle G\rangle} M}
$$ 
to be
$\overline{L} \otimes_{\Z\langle G\rangle} \overline{M}$, with
$$\overline{x \otimes v} = \overline{x} \otimes \overline{v}.$$ 
\end{defn}

\begin{ex}
\label{tensorLdef}
Let $L=L_{(I_1,w_1)}$ and $M=L_{(I_2,w_2)}$ where $I_1, I_2$ are
fractional $\Z\langle G\rangle$-ideals,   
$w_1,w_2 \in \Q\langle G\rangle^\ast$ are such that
$\psi(w_i) \in\R_{>0}$ for all $\psi \in\Psi$, and
$I_i\overline{I_i} = \Z\langle G\rangle  w_i$ for $i=1,2$. 
Then $L \otimes_{\Z\langle G\rangle} M$ may be identified
with $I_1I_2$ via  Lemma \ref{fractideallem},
and $\overline{L \otimes_{\Z\langle G\rangle} M}$ 
may be identified with $\overline{I_1I_2}$,
and the dot product 
$$
I_1I_2 \times \overline{I_1I_2} \to \Z\langle G\rangle
$$
from Definition \ref{tensoringGdef}
becomes $a\cdot\overline{b} = a\overline{b}/(w_1w_2)$
as in Example \ref{LIwexdot}.
This is precisely the lifted inner product of the $G$-lattice
$L_{(I_1I_2,w_1w_2)}$ (which is invertible by Theorem \ref{invertequiv}).
We thus have
\begin{equation}
\label{LItensoreq}
L_{(I_1,w_1)} \otimes_{\Z\langle G\rangle} L_{(I_2,w_2)} = L_{(I_1I_2,w_1w_2)}.
\end{equation}
\end{ex}

\begin{thm}
\label{tensoringGlattices}
Let $L$ and $M$ be invertible $G$-lattices.
Then $L \otimes_{\Z\langle G\rangle} M$ is an invertible $G$-lattice
with inner product 
$$
\langle a,b\rangle = t(a\cdot\overline{b}),
$$
where the dot product is defined in Definition \ref{tensoringGdef}
and equals the lifted inner product for this $G$-lattice. 
\end{thm}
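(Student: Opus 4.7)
The plan is to reduce to the ideal-lattice picture of Example~\ref{tensorLdef} via the equivalence (a)$\Leftrightarrow$(d) of Theorem~\ref{invertequiv}. First I would fix $G$-isomorphisms $L \cong L_{(I_1, w_1)}$ and $M \cong L_{(I_2, w_2)}$ with $I_j \overline{I_j} = \Z\langle G\rangle w_j$ and $\psi(w_j) \in \R_{>0}$ for all $\psi \in \Psi$, which exist by part (d). Setting $I := I_1 I_2$ and $w := w_1 w_2$, multiplicativity immediately gives $I\overline{I} = \Z\langle G\rangle w$, and since each $\psi \in \Psi$ is a ring homomorphism, $\psi(w) = \psi(w_1)\psi(w_2) \in \R_{>0}$. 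Hence the hypotheses of Theorem~\ref{Iwmisclem} hold for $(I,w)$, and Theorem~\ref{invertequiv}(d)$\Rightarrow$(a) shows that $L_{(I,w)}$ is an invertible $G$-lattice.

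Next I would appeal to \eqref{LItensoreq} from Example~\ref{tensorLdef}: under the chosen identifications, $L \otimes_{\Z\langle G\rangle} M$ is canonically identified with the $\Z\langle G\rangle$-module underlying $L_{(I,w)}$, and the bilinear map of Definition~\ref{tensoringGdef} becomes $(a,\overline{b}) \mapsto a\overline{b}/w$. Applying $t$ then recovers the pairing $\langle a,b\rangle_{I,w} = t(a\overline{b}/w)$ of Notation~\ref{IwGlatdefn}, so the formula $\langle a,b\rangle = t(a\cdot \overline{b})$ of the theorem endows $L \otimes_{\Z\langle G\rangle} M$ with the structure of an invertible $G$-lattice, namely $L_{(I,w)}$. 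That this dot product coincides with the lifted inner product in the sense of Definition~\ref{dotproddef} is then a direct consequence of the computation of the lifted inner product on $L_{(I,w)}$ carried out in Example~\ref{LIwexdot}.

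The main obstacle — the only point that is not formal bookkeeping — is justifying that the four-variable formula $(x,\overline{y},v,\overline{w}) \mapsto (x \cdot \overline{y})(v \cdot \overline{w})$ in Definition~\ref{tensoringGdef} actually descends to a well-defined map on the product $(L \otimes_{\Z\langle G\rangle} M) \times (\overline{L} \otimes_{\Z\langle G\rangle} \overline{M})$. This reduces to checking $\Z\langle G\rangle$-balancing in each tensor factor separately, which follows from the $\Z\langle G\rangle$-bilinearity of the individual lifted inner products recorded in \eqref{tdotQeqn}, together with commutativity of $\Z\langle G\rangle$ to slide scalars between the two factors of $(x \cdot \overline{y})(v \cdot \overline{w})$. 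Once this well-definedness is in hand, the identification with $L_{(I,w)}$ together with Examples~\ref{LIwexdot} and~\ref{tensorLdef} completes the argument.
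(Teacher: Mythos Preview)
Your proposal is correct and follows essentially the same approach as the paper: reduce via Theorem~\ref{invertequiv}(d) to the ideal-lattice case $L\cong L_{(I_1,w_1)}$, $M\cong L_{(I_2,w_2)}$, and then invoke the computation in Example~\ref{tensorLdef} (equation~\eqref{LItensoreq}) to identify $L\otimes_{\Z\langle G\rangle}M$ with the invertible $G$-lattice $L_{(I_1I_2,w_1w_2)}$. Your write-up in fact supplies more detail than the paper's own proof, including the verification that $(I_1I_2,w_1w_2)$ satisfies the hypotheses of Theorem~\ref{Iwmisclem} and the well-definedness check for the bilinear form of Definition~\ref{tensoringGdef}, both of which the paper leaves implicit.
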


\begin{proof}
By Theorem \ref{invertequiv} we may assume  that
$L=L_{(I_1,w_1)}$ and $M=L_{(I_2,w_2)}$ where $I_1, I_2$ are
fractional $\Z\langle G\rangle$-ideals,   
$w_1,w_2 \in \Q\langle G\rangle^\ast$ are such that
$\psi(w_i) \in\R_{>0}$ for all $\psi \in\Psi$, and
$I_i\overline{I_i} = \Z\langle G\rangle  w_i$ for $i=1,2$.
In this case, we already checked the theorem in Example \ref{tensorLdef}.
\end{proof}

\begin{prop}
\label{invertequivcor}
Suppose that $L$, $M$, and $N$ are invertible $G$-lattices. 
Then we have the following $G$-isomorphisms:
\begin{enumerate}
\item 
$L \otimes_{\Z\langle G\rangle} M \cong M \otimes_{\Z\langle G\rangle} L$,
\item
$(L \otimes_{\Z\langle G\rangle} M) \otimes_{\Z\langle G\rangle} N \cong 
L \otimes_{\Z\langle G\rangle} (M \otimes_{\Z\langle G\rangle} N)$,
\item
$L \otimes_{\Z\langle G\rangle} \Z\langle G\rangle\cong L$,
\item
$L \otimes_{\Z\langle G\rangle} \overline{L} \cong \Z\langle G\rangle$.
\end{enumerate}
\end{prop}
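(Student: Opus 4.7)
The plan is to reduce every one of the four assertions to a routine statement about fractional $\Z\langle G\rangle$-ideals via the classification given by Theorem \ref{invertequiv}(d). Concretely, using that theorem I first replace $L$, $M$, $N$ by $G$-isomorphic lattices of the form $L_{(I_L,w_L)}$, $L_{(I_M,w_M)}$, $L_{(I_N,w_N)}$, where the pairs $(I,w)$ satisfy $I\overline{I} = \Z\langle G\rangle\cdot w$ and $\psi(w)\in\R_{>0}$ for all $\psi\in\Psi$. The formula \eqref{LItensoreq} from Example \ref{tensorLdef} then says that tensor products are computed simply by multiplying the data, i.e.
\[
L_{(I_1,w_1)} \otimes_{\Z\langle G\rangle} L_{(I_2,w_2)} \;=\; L_{(I_1I_2,\,w_1w_2)}.
\]
Thus each of the four asserted $G$-isomorphisms becomes an equality, or near-equality, of ideal-lattice data, to which the $G$-isomorphism criterion of Theorem \ref{I1I2isom} applies: an element $v\in\Q\langle G\rangle$ satisfying $I_1 = vI_2$ and $w_1 = v\overline{v}w_2$ produces the desired $G$-isomorphism by multiplication.

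For (i), (ii) and (iii) the relevant ideal data already coincide on the nose, so the choice $v=1$ works: commutativity and associativity follow from commutativity and associativity of ideal multiplication in $\Q\langle G\rangle$ and of ordinary multiplication of the $w$'s, while (iii) is the identity $I\cdot\Z\langle G\rangle = I$ together with $w\cdot 1 = w$ (noting that the standard $G$-lattice is $L_{(\Z\langle G\rangle,1)}$ by the example following Notation \ref{IwGlatdefn}). For (iv), $\overline{L}$ is $G$-isomorphic to $L_{(\overline{I_L},w_L)}$ (which one sees by tracing the construction in Definition \ref{Lbardefn} through $L_{(I_L,w_L)}$), so the tensor product equals $L_{(I_L\overline{I_L},\,w_L^2)} = L_{(\Z\langle G\rangle w_L,\,w_L^2)}$. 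To compare this with $L_{(\Z\langle G\rangle,1)} = \Z\langle G\rangle$, I take $v = w_L$: Theorem \ref{Iwmisclem}(i) gives $\overline{w_L}=w_L$, so $v\overline{v}\cdot 1 = w_L^2$ and $v\cdot\Z\langle G\rangle = \Z\langle G\rangle w_L$, matching both required conditions.

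The only point that requires a moment of care is verifying that the identification of $L$ with $L_{(I_L,w_L)}$ transports $\overline{L}$ to $L_{(\overline{I_L},w_L)}$ with the conjugation map $\overline{\phantom{x}}$ of Definition \ref{Lbardefn} corresponding to the obvious conjugation on $\overline{I_L}\subset \Q\langle G\rangle$; this is unwound directly from the definition of $\langle\,\cdot\,,\,\cdot\,\rangle_{I,w}$ and the fact that $\sigma\overline{x} = \overline{\sigma^{-1}x}$ coincides, via the isomorphism, with the action of $\sigma$ by multiplication on $\overline{I_L}$. Once this book-keeping is done, the proposition is a formal consequence of the dictionary supplied by Theorems \ref{invertequiv} and \ref{I1I2isom}, and I expect no substantive obstacle beyond ensuring this reduction is stated cleanly.
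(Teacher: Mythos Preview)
Your proposal is correct and follows essentially the same route as the paper: reduce via Theorem~\ref{invertequiv} to lattices of the form $L_{(I,w)}$, invoke \eqref{LItensoreq}, and read off (i)--(iii) from commutativity, associativity, and the identity $\Z\langle G\rangle = L_{(\Z\langle G\rangle,1)}$. For (iv) the paper makes the additional observation $\overline{L} \cong L_{(\overline{I},w)} \cong L_{(I^{-1},w^{-1})}$ so that the tensor product is literally $L_{(\Z\langle G\rangle,1)}$, whereas you stop at $L_{(\Z\langle G\rangle w_L,\,w_L^2)}$ and then apply Theorem~\ref{I1I2isom} with $v=w_L$; these are equivalent and the difference is purely cosmetic.
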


\begin{proof}
By Theorem \ref{invertequiv} we may reduce to the case where the invertible $G$-lattices
are of the form $L_{(I,w)}$.
Then \eqref{LItensoreq} immediately gives (i) and (ii).
For (iii) and (iv),
note that $\Z\langle G\rangle = L_{(\Z\langle G\rangle,1)}$,
and if $L = L_{(I,w)}$ then 
$$\overline{L} \cong L_{(\overline{I},w)} \cong L_{(\overline{I}w^{-1},w^{-1})} = 
L_{({I}^{-1},w^{-1})}.$$
\end{proof}

\begin{rem}
One can extend parts (i), (ii), and (iii) of Proposition \ref{invertequivcor}
to general $G$-lattices, by replacing $L \otimes_{\Z\langle G\rangle} M$
by its image in $L_\Q \otimes_{\Q\langle G\rangle} M_\Q$.
That image is a $G$-lattice with lifted inner product given by the same
formula.
\end{rem}

\section{The Witt-Picard group}
\label{WPsect}

This section, which is mostly a digression, is devoted to
what we call the Witt-Picard group $\WPic_{\Z\langle G\rangle}$.
The results of this section are not directly used later, with
the exception of the proof of Theorem \ref{WPgpfin}, but
it may be said that the properties of $\WPic_{\Z\langle G\rangle}$,
in particular its finiteness, are what makes our algorithms possible.
Also, several of our results admit an attractive reformulation
in terms of $\WPic_{\Z\langle G\rangle}$.

As before, $G$ is a finite abelian group of order $2n$ equipped with
an element $u$ of order $2$.

\begin{defn}
\label{WPgpdefn}
We define  
$$\WPic_{\Z\langle G\rangle} 
=\{ [L] : L \text{ is an invertible $G$-lattice} \},$$
where the symbols $[L]$ are chosen so that $[L]=[M]$ if and only if
$L$ and $M$ are $G$-isomorphic.
\end{defn}

\begin{thm}
\label{WPicabgpthm}
The set $\WPic_{\Z\langle G\rangle}$
is an abelian group, with group operation defined by
$$
[L]\cdot[M]=[L \otimes_{\Z\langle G\rangle} M],
$$
with identity element $[\Z\langle G\rangle]$, and with
$$
[L]^{-1}=[\overline{L}].
$$
\end{thm}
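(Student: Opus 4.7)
The plan is to verify the group axioms by assembling the ingredients already produced in the preceding sections, so almost nothing is left to compute from scratch.

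First I would check that the proposed operation $[L]\cdot[M]=[L\otimes_{\Z\langle G\rangle} M]$ is well defined on isomorphism classes. By Theorem \ref{tensoringGlattices}, whenever $L$ and $M$ are invertible $G$-lattices, so is $L\otimes_{\Z\langle G\rangle}M$, hence the right-hand side lies in $\WPic_{\Z\langle G\rangle}$. For independence of representatives, if $\varphi\colon L\isom L'$ and $\psi\colon M\isom M'$ are $G$-isomorphisms, then $\varphi\otimes\psi$ is a $\Z\langle G\rangle$-module isomorphism $L\otimes M\isom L'\otimes M'$, and a direct inspection of the formula $(x\otimes v)\cdot(\overline{y}\otimes\overline{w})=(x\cdot\overline{y})(v\cdot\overline{w})$ from Definition \ref{tensoringGdef} shows that $\varphi\otimes\psi$ preserves the lifted inner product, hence (by \eqref{tdoteqn}) the inner product itself. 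It is manifestly $G$-equivariant, so it is a $G$-isomorphism.

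Next I would deduce the axioms directly from Proposition \ref{invertequivcor}. Commutativity of the operation is \ref{invertequivcor}(i); associativity is \ref{invertequivcor}(ii); the class $[\Z\langle G\rangle]$ is a two-sided identity by \ref{invertequivcor}(iii) combined with commutativity; and the relation $[L]\cdot[\overline{L}]=[\Z\langle G\rangle]$ is \ref{invertequivcor}(iv), which shows that $[\overline{L}]$ serves as an inverse for $[L]$. For this to be legitimate one must also know that $\overline{L}$ is itself an invertible $G$-lattice; this is immediate since Theorem \ref{invertequiv}(d) exhibits $L$ as $L_{(I,w)}$ and then $\overline{L}\cong L_{(\overline{I},w)}$, which satisfies the same hypotheses and so is invertible by the same theorem.

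There is no real obstacle here: every ingredient has already been proved. The only point that deserves a sentence of care is well-definedness of the tensor product on $G$-isomorphism classes, since the tensor product construction in Definition \ref{tensoringGdef} involves both the module structure and the lifted inner product, and one must remark that $G$-isomorphisms respect both. Everything else is a formal transcription of Proposition \ref{invertequivcor} into the language of $\WPic_{\Z\langle G\rangle}$.
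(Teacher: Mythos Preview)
Your proposal is correct and follows the same approach as the paper, which simply states that the theorem ``follows immediately from Theorem \ref{tensoringGlattices} and Proposition \ref{invertequivcor}.'' You are more explicit than the paper in checking well-definedness of the operation on isomorphism classes and in noting that $\overline{L}$ is invertible, but these are exactly the points the paper's one-line proof takes for granted.
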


\begin{proof}
This follows immediately from Theorem \ref{tensoringGlattices}
and Proposition \ref{invertequivcor}.
\end{proof}

\begin{cor}
\label{invertequivcor2}
Suppose that $L$ and $M$ are invertible $G$-lattices.
Then $L$ and $M$ are $G$-isomorphic if and only if 
$L \otimes_{\Z\langle G\rangle} \overline{M}$
and $\Z\langle G\rangle$ are $G$-isomorphic.
\end{cor}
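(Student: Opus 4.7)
The plan is to derive the corollary as an immediate consequence of Theorem \ref{WPicabgpthm}, i.e.\ from the group structure on $\WPic_{\Z\langle G\rangle}$. The point is that once we know $\WPic_{\Z\langle G\rangle}$ is a group in which $[\overline{M}]$ is the inverse of $[M]$ and the product is tensor product, the claim becomes the standard cancellation statement ``$[L]=[M]$ iff $[L]\cdot[M]^{-1}=1$''.

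First I would unpack the definitions. By Definition \ref{WPgpdefn}, $L$ and $M$ are $G$-isomorphic if and only if $[L]=[M]$ in $\WPic_{\Z\langle G\rangle}$, and $L\otimes_{\Z\langle G\rangle}\overline{M}$ and $\Z\langle G\rangle$ are $G$-isomorphic if and only if $[L\otimes_{\Z\langle G\rangle}\overline{M}] = [\Z\langle G\rangle]$. By Theorem \ref{WPicabgpthm}, the latter equals $[L]\cdot[\overline{M}] = [L]\cdot[M]^{-1}$, and the identity element of $\WPic_{\Z\langle G\rangle}$ is $[\Z\langle G\rangle]$.

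Next I would invoke the elementary fact that in any group, $a=b$ iff $ab^{-1}=e$. Applied to $a=[L]$, $b=[M]$ in $\WPic_{\Z\langle G\rangle}$, this gives $[L]=[M]$ iff $[L]\cdot[M]^{-1}=[\Z\langle G\rangle]$, i.e.\ iff $[L\otimes_{\Z\langle G\rangle}\overline{M}] = [\Z\langle G\rangle]$, which by the previous paragraph is exactly the statement that $L\otimes_{\Z\langle G\rangle}\overline{M}\cong \Z\langle G\rangle$ as $G$-lattices.

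There is no real obstacle here, as all the content sits in Theorem \ref{WPicabgpthm}; the only thing to double-check is that tensoring with $\overline{M}$ really defines an inverse on the isomorphism-class level, which is precisely Proposition \ref{invertequivcor}(iv) (used to establish the group axioms in Theorem \ref{WPicabgpthm}). Thus the corollary is immediate from the fact that $\WPic_{\Z\langle G\rangle}$ is a group.
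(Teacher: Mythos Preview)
Your proof is correct and is essentially identical to the paper's own argument: the paper also says the corollary follows immediately from Theorem~\ref{WPicabgpthm} and writes out the same chain of equivalences $[L]=[M] \iff [L][M]^{-1}=[\Z\langle G\rangle] \iff [L\otimes_{\Z\langle G\rangle}\overline{M}]=[\Z\langle G\rangle]$.
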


\begin{proof}
This follows immediately from Theorem \ref{WPicabgpthm}.
More precisely,
\begin{align*}
L\cong_G M & \iff [L] = [M] \\
& \iff [L][M]^{-1}=1=[\Z\langle G\rangle] \\
& \iff [L \otimes_{\Z\langle G\rangle} \overline{M}]=[\Z\langle G\rangle] \\
& \iff L \otimes_{\Z\langle G\rangle} \overline{M} \cong_G \Z\langle G\rangle
\end{align*}
where $\cong_G$ means $G$-isomorphic.
\end{proof}

The following description of $\WPic_{\Z\langle G\rangle}$ is 
reminiscent of the definition of class groups in algebraic number theory.

\begin{prop}
\label{Wpicasquot}
Let $\I_{\Z\langle G\rangle}$ denote the group of
invertible fractional $\Z\langle G\rangle$-ideals.
Then the group $\WPic_{\Z\langle G\rangle}$ is isomorphic to the quotient
of the group
$$
\{ (I,w) \in \I_{\Z\langle G\rangle} \times \Q\langle G\rangle^\ast : 
I\overline{I} = \Z\langle G\rangle w \text{ and } \psi(w)\in\R_{>0}
\text{ for all $\psi\in\Psi$} \} 
$$
by its subgroup 
$\{ (\Z\langle G\rangle v,v\overline{v}) : v\in \Q\langle G\rangle^\ast \}$.
\end{prop}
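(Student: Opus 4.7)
The plan is to produce the isomorphism by constructing the surjection
$$
\Phi : \{(I,w) : I\overline{I} = \Z\langle G\rangle w,\ \psi(w)\in\R_{>0}\ \forall \psi\in\Psi\} \longrightarrow \WPic_{\Z\langle G\rangle},\quad (I,w)\mapsto [L_{(I,w)}],
$$
and identifying its kernel with the stated subgroup. First I would verify that the source is indeed an abelian group under coordinatewise multiplication: closure is immediate since $(I_1I_2)\overline{(I_1I_2)} = I_1\overline I_1\cdot I_2\overline I_2 = \Z\langle G\rangle w_1w_2$ and $\psi(w_1w_2) = \psi(w_1)\psi(w_2) \in \R_{>0}$; the identity is $(\Z\langle G\rangle,1)$; and inverses are given by $(I,w)^{-1} = (I^{-1}, w^{-1})$, using that $I$ is invertible and that $\psi(w^{-1}) = \psi(w)^{-1} > 0$. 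I would also check that the candidate subgroup $\{(\Z\langle G\rangle v, v\overline v) : v\in \Q\langle G\rangle^\ast\}$ actually sits inside the group: the defining identity $(\Z\langle G\rangle v)(\overline{\Z\langle G\rangle v}) = \Z\langle G\rangle v\overline v$ is obvious, and $\psi(v\overline v) = \psi(v)\overline{\psi(v)} = |\psi(v)|^2 > 0$ by Lemma~\ref{psimisclem}(i),(vi) since $v\in\Q\langle G\rangle^\ast$. Closure under multiplication and inversion for this subgroup is immediate.

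Next I would show that $\Phi$ is a well-defined group homomorphism. Well-definedness is built in: $[L_{(I,w)}]$ denotes a $G$-isomorphism class in $\WPic_{\Z\langle G\rangle}$, and $L_{(I,w)}$ is invertible by Theorem~\ref{invertequiv}(d). The homomorphism property is exactly equation~\eqref{LItensoreq}, which reads $L_{(I_1,w_1)}\otimes_{\Z\langle G\rangle} L_{(I_2,w_2)} = L_{(I_1I_2, w_1w_2)}$, together with the definition of the group law on $\WPic_{\Z\langle G\rangle}$ in Theorem~\ref{WPicabgpthm}. The identity $(\Z\langle G\rangle,1)$ maps to $[\Z\langle G\rangle]$ by the example following Notation~\ref{IwGlatdefn}.

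For surjectivity, I would invoke Theorem~\ref{invertequiv}(d): every invertible $G$-lattice is $G$-isomorphic to some $L_{(I,w)}$ with $(I,w)$ satisfying the required compatibility and positivity, so every class in $\WPic_{\Z\langle G\rangle}$ lies in the image of $\Phi$. Note that $I$ is automatically invertible as a fractional $\Z\langle G\rangle$-ideal since $I\overline I = \Z\langle G\rangle w$ with $w\in\Q\langle G\rangle^\ast$, so the pair does lie in the source group.

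Finally, I would compute $\ker \Phi$. A pair $(I,w)$ is in the kernel iff $L_{(I,w)}$ is $G$-isomorphic to $\Z\langle G\rangle = L_{(\Z\langle G\rangle, 1)}$, which by the last sentence of Theorem~\ref{I1I2isom} happens iff there exists $v\in\Q\langle G\rangle$ with $I = v\Z\langle G\rangle$ and $w = v\overline v$. Since $I$ spans $\Q\langle G\rangle$ over $\Q$, any such $v$ must lie in $\Q\langle G\rangle^\ast$. Therefore $\ker\Phi$ equals precisely the subgroup $\{(\Z\langle G\rangle v, v\overline v) : v\in\Q\langle G\rangle^\ast\}$, and the first isomorphism theorem gives the desired identification. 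No step is a genuine obstacle; the only place requiring care is checking that every fractional ideal $I$ occurring in the source is automatically invertible (so that the source really does land inside $\I_{\Z\langle G\rangle}\times\Q\langle G\rangle^\ast$), which follows immediately from the relation $I\overline I = \Z\langle G\rangle w$ with $w$ a unit.
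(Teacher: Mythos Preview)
Your proposal is correct and follows essentially the same approach as the paper: define the map $(I,w)\mapsto[L_{(I,w)}]$, obtain surjectivity from Theorem~\ref{invertequiv}, and identify the kernel via Theorem~\ref{I1I2isom}. The paper's proof is a two-line sketch, whereas you have (correctly) filled in the routine verifications that the source is a group, that the candidate kernel is a subgroup, and that $\Phi$ is a homomorphism via~\eqref{LItensoreq}.
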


\begin{proof}
Define the map by $(I,w) \mapsto [L_{(I,w)}]$. 
Surjectivity follows from Theorem \ref{invertequiv},
and the kernel is the desired subgroup by Theorem \ref{I1I2isom}.
\end{proof}

Just as for the class group, we have:

\begin{thm}
\label{WPgpfin}
The group $\WPic_{\Z\langle G\rangle}$
is finite.
\end{thm}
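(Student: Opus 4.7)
The plan is to reduce finiteness of $\WPic_{\Z\langle G\rangle}$ to a finite enumeration of the data needed to specify an invertible $G$-lattice. An element of $\WPic_{\Z\langle G\rangle}$ is the $G$-isomorphism class of some invertible $G$-lattice $L$, and by Definition \ref{invertdef2} any such $L$ has $\rank(L)=n=\#G/2$ and is unimodular. So up to $G$-isomorphism, the data specifying $L$ is a pair consisting of an integral positive definite inner product on $\Z^n$ of determinant $1$ together with a group homomorphism $f:G\to\Aut(\Z^n,\langle\,,\rangle)$ satisfying $f(u)=-1$, and two such pairs give $G$-isomorphic lattices precisely when they are conjugate by an element of $\GL_n(\Z)$.

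First I would choose, for each invertible $G$-lattice $L$, an LLL-reduced basis $\{b_1,\dots,b_n\}$ of $L$, which exists by Remark \ref{LLLredrmk}. Expressing both the inner product and the $G$-action on this basis yields an integral Gram matrix $(\langle b_i,b_j\rangle)_{i,j}$ and, for each $\sigma\in G$, an integer matrix $(a_{ij}(\sigma))_{i,j}$ giving the action of $\sigma$. By Proposition \ref{LLLlem}(iii), the Gram matrix entries satisfy $|\langle b_i,b_j\rangle|\le 2^{n-1}$; by Proposition \ref{LLLlem}(iv), the entries of every action matrix satisfy $|a_{ij}(\sigma)|\le 3^{n-1}$. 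Thus the data $(\text{Gram matrix},\,(a_{ij}(\sigma))_{\sigma\in G})$ consists of finitely many integer matrices whose entries lie in a bounded range depending only on $n=\#G/2$, and therefore can take only finitely many values.

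Consequently, there are only finitely many invertible $G$-lattices up to $G$-isomorphism, which is exactly the statement that $\WPic_{\Z\langle G\rangle}$ is finite. The substantive input is the bound in Proposition \ref{LLLlem}(iv); everything else is packaging. I don't anticipate a real obstacle, since all ingredients (unimodularity, rank $=n$, LLL reduction producing basis-entry and matrix-entry bounds) are already established earlier in the paper. One could avoid mentioning the Gram matrix bound separately by observing that it follows from the bound $|b_i|^2\le 2^{n-1}$ of Proposition \ref{LLLlem}(ii) together with Cauchy--Schwarz, but stating both makes the enumeration transparent.
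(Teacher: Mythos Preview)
Your proposal is correct and follows essentially the same argument as the paper: choose an LLL-reduced basis for each invertible $G$-lattice, invoke Proposition~\ref{LLLlem}(iii) and (iv) to bound the Gram matrix entries by $2^{n-1}$ and the $G$-action matrix entries by $3^{n-1}$, and conclude that only finitely many such data tuples exist, hence only finitely many $G$-isomorphism classes. The paper makes the last implication explicit by observing that identical data for $L$ and $L'$ yields a $G$-isomorphism $b_i\mapsto b_i'$, which is what your remark about $\GL_n(\Z)$-conjugacy amounts to.
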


\begin{proof}
If $L$ is an invertible $G$-lattice and $\{ b_1,\ldots, b_n\}$ is an
LLL-reduced basis, and for $\sigma \in G$ we have
$\sigma(b_i) = \sum_{j=1}^n a_{ij}^{(\sigma)}b_j$ 
with $a_{ij}^{(\sigma)}\in \Z$,
then 
$$
|\langle b_i,b_j\rangle| \le 2^{n-1} \quad \text{ and } \quad
|a_{ij}^{(\sigma)}| \le 3^{n-1}
$$ 
for all $i$, $j$, and $\sigma$,
by Proposition \ref{LLLlem}(iii) and (iv).
Thus there are only finitely many possibilities for 
$$
((\langle b_i,b_j\rangle)_{i,j=1}^n,(a_{ij}^{(\sigma)})_{i,j=1,\ldots,n; \sigma\in G}).
$$
If $L'$ is also an invertible $G$-lattice with
LLL-reduced basis $\{ b_1',\ldots, b_n'\}$, 
and if we have 
$$
\langle b_i,b_j\rangle =  \langle b_i',b_j'\rangle \quad \text{ and } \quad
a_{ij}^{(\sigma)}=a_{ij}'^{(\sigma)}
$$ 
for all $i$, $j$, and $\sigma$,
then the group isomorphism 
$$
L\to L', \quad b_i\mapsto b_i'
$$ 
is an
isomorphism of $G$-lattices.
The finiteness of $\WPic_{\Z\langle G\rangle}$ now follows. 
\end{proof}

We call $\WPic_{\Z\langle G\rangle}$ the {\bf Witt-Picard group} 
of $\Z\langle G\rangle$.
The reason for the nomenclature lies in Theorem \ref{invertequiv}.
If $R$ is a commutative ring, an invertible $R$-module is
an $R$-module $L$ for which
there exists an $R$-module $M$ with $L\otimes_R M \cong R$.
The Picard group $\Pic_R$ is the set 
of invertible $R$-modules up to isomorphism, where
the group operation is tensoring over $R$.
This addresses the module structure, while 
Witt rings reflect the structure as a unimodular lattice.

We remark that one can formulate algorithms for $\WPic_{\Z\langle G\rangle}$,
as follows.
Elements $[L] \in \WPic_{\Z\langle G\rangle}$ are
represented as $L$ with an LLL-reduced basis.

\begin{prop}
There are deterministic polynomial-time algorithms for:
\begin{enumerate}
\item
finding the unit element,
\item
inverting,
\item
multiplying,
\item
exponentiation,
\item
equality testing.
\end{enumerate}
\end{prop}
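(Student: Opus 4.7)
The approach exploits Proposition \ref{LLLlem}: for an LLL-reduced basis $\{b_1,\dots,b_n\}$ of an invertible $G$-lattice $L$, the entries of the Gram matrix and of each of the $G$-action matrices $(A_\sigma)_{\sigma\in G}$ are bounded in absolute value by $2^{n-1}$ and $3^{n-1}$ respectively. Hence the chosen representation already has bit-size polynomial in $n=\#G/2$, and the plan is to show that each of the five operations produces output that, after a final LLL-reduction (Remark \ref{LLLredrmk}), respects those bounds.

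For (i), output the standard $G$-lattice $\Z\langle G\rangle$ with basis $S$: the Gram matrix is the identity, the action of each $\sigma\in G$ is a signed permutation of $S$, and $S$ is already LLL-reduced. For (ii), by Definition \ref{Lbardefn} the $G$-lattice $\overline{L}$ shares the underlying integral lattice of $L$, so the same Gram matrix and LLL-reduced basis serve, and the action matrices are simply replaced by $(A_{\sigma^{-1}})_{\sigma\in G}$. For (v), invoke Corollary \ref{invertequivcor2}: $[L]=[M]$ holds if and only if $L\otimes_{\Z\langle G\rangle}\overline{M}$ is $G$-isomorphic to $\Z\langle G\rangle$; compose (ii), (iii) below, and the algorithm of Theorem \ref{mainthm}.

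The heart of the matter is (iii). Form $L\otimes_\Z M$ with its diagonal $G$-action and pass to the quotient $L\otimes_{\Z\langle G\rangle}M$ by imposing the relations $(\sigma x)\otimes v-x\otimes(\sigma v)$ for all $\sigma\in G$; by Theorem \ref{tensoringGlattices} this quotient is torsion-free of rank $n$, so a Smith normal form computation over $\Z$ yields a $\Z$-basis. Equip the result with the lifted inner product from Definition \ref{tensoringGdef} (computable entrywise from the two input dot products) and the induced $G$-action, then apply LLL. Once (iii) is in polynomial time, (iv) follows by repeated squaring: write $k$ in binary and assemble $[L]^k$ with $O(\log|k|)$ applications of (iii), combined with one application of (ii) if $k<0$, LLL-reducing after every intermediate step. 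As emphasized in the introduction, this per-step reduction is exactly what prevents coefficient blow-up, since Proposition \ref{LLLlem} forces each reduced representation back into polynomially-bounded size regardless of how high the exponent is.

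The main obstacle is controlling the bit-size of the intermediate representation in (iii) before the final LLL-reduction. Computing the quotient by the $\Z\langle G\rangle$-relations, extracting a $\Z$-basis via Smith normal form, and evaluating the lifted inner product and the induced $G$-action on that basis can substantially inflate the matrix entries; one must verify that this inflation remains polynomial in the input size, so that LLL-reduction, which is itself polynomial-time, succeeds in restoring the bounds of Proposition \ref{LLLlem} within its own polynomial budget. The finiteness of $\WPic_{\Z\langle G\rangle}$ (Theorem \ref{WPgpfin}) serves as a conceptual guarantee that no sequence of operations can ever escape the representable bit-size, but a direct complexity analysis of the Smith normal form step is the concrete step that must be carried out.
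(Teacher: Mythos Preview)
Your treatment of (i), (ii), (iv), and (v) matches the paper's: the unit is $[\Z\langle G\rangle]$, inversion is the change of $G$-action $\sigma\mapsto\sigma^{-1}$, exponentiation is repeated squaring with LLL after each step, and equality reduces via Corollary~\ref{invertequivcor2} to the main theorem.

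For (iii) you take a genuinely different route. The paper's primary method is Algorithm~\ref{LMmultalg}: realize each factor as an ideal lattice $L_{(I,w)}$ and $L_{(J,v)}$ using Lemma~\ref{aimpliesdlem} with $\gamma=e_2$, multiply the fractional ideals inside $\Q\langle G\rangle$, and return $L_{(IJ,wv)}$ after LLL. You instead pass through the rank-$n^2$ module $L\otimes_\Z M$, quotient by the $\Z\langle G\rangle$-relations via Smith normal form, and then reconstruct the inner product and $G$-action. The paper in fact mentions exactly your method as an alternative immediately after Algorithm~\ref{LMmultalg}, calling it ``probably less efficient.'' The ideal-lattice approach has the advantage that all intermediate objects already live in $\Q\langle G\rangle$ and have rank $n$, so the polynomial bound on bit-size is immediate; your approach incurs the $n^2$-rank intermediate object and the Smith normal form step, which is still polynomial but with a worse exponent. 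Both are correct, and your caveat about the intermediate blow-up is well placed but not fatal: the entries of the $n^2\times n^2$ relation matrix are bounded by products of the $A_\sigma$-entries, hence by $3^{O(n)}$, and Smith normal form over $\Z$ is polynomial in the matrix dimensions and the bit-length of the entries.
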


\begin{proof}
Part (i) is trivial, since $1 = [\Z\langle G\rangle]$.
For (ii) we have $[L]^{-1} = [\overline{L}]$, and the algorithm is
to replace each $\sigma$ by $\overline{\sigma}$.
For parts (iii), (iv), and (v) use Algorithms \ref{dd'alg} and  \ref{dtotheralg} below
and Theorem  \ref{mainthmcor}, respectively.
\end{proof}

\section{Multiplying and exponentiating invertible $G$-lattices}

In this section we give algorithms for 
multiplying and exponentiating invertible $G$-lattices.
We shall always assume that all $G$-lattices in inputs and
outputs of algorithms are specified via an LLL-reduced basis.
As we saw in the proof of Theorem \ref{WPgpfin},
this prevents coefficient blow-up.

\begin{algorithm}
\label{LMmultalg}
Given invertible $G$-lattices $L$ and $M$ equipped with LLL-reduced
bases, the algorithm outputs 
$L \otimes_{\Z\langle G\rangle} M$ with an LLL-reduced
basis and an $n\times n\times n$ array of integers to describe the
multiplication map 
$$
L \times M \to L \otimes_{\Z\langle G\rangle} M.
$$ 

\begin{enumerate}
\item
Realize $L$ as $L_{(I,w)}$ as in Lemma \ref{aimpliesdlem}, using
$\gamma = e_2$, and likewise realize $M$ as $L_{(J,v)}$.
\item
Compute $IJ \subset \Q\langle G\rangle$ and an LLL-reduced basis
for the $G$-lattice $L_{(IJ,wv)}$.
\item
Output $L \otimes_{\Z\langle G\rangle} M = L_{(IJ,wv)}$ and
the multiplication map  
$$
L \times M \to L \otimes_{\Z\langle G\rangle} M
$$
coming from multiplication $I\times J \to IJ$ in the ring 
$\Q\langle G\rangle$.
\end{enumerate}
\end{algorithm}

An alternative (probably less efficient) option is to directly use the definition
of tensor product, i.e., compute $L \otimes_{\Z\langle G\rangle} M$ 
as 
$$
(L \otimes_{\Z} M)/(\sum_{i,j,\sigma} \Z(\sigma b_i\otimes b_j' - b_i\otimes \sigma b_j'))
$$
where 
$$
L \otimes_{\Z} M = \bigoplus_{i,j} \Z(b_i\otimes b_j').
$$
With either choice, Algorithm \ref{LMmultalg} runs in polynomial time.
Using ideals works well for computing products and low powers
(cf.~Algorithm \ref{algor}(vii) below).
However, computing high powers of ideals cannot be done in polynomial time,
but computing high tensor powers of $G$-lattices is possible.
Likewise, the map 
$L \to L^{\otimes r}$,  $d \mapsto d^{\otimes r}$ 
cannot be written
down for large $r$, but one can compute the composition
$$
L \to L^{\otimes r} \to L^{\otimes r}/mL^{\otimes r}
$$
(see Algorithm \ref{dd'alg}), and thanks to Proposition \ref{findingekmprop2}
this suffices for our purposes.

Applying Algorithm \ref {LMmultalg} gives the following
polynomial-time algorithm.

\begin{algorithm}
\label{dd'alg}
Given $G$ and $u$ as usual, invertible $G$-lattices $L$ and $L'$ equipped with
LLL-reduced bases, a positive integer $m$,
and elements $d\in L/mL$ and $d'\in L'/mL'$, the algorithm computes 
$
L \otimes_{\Z\langle G\rangle} L'
$ 
and the element
$$
d\otimes d'\in (L \otimes L')/m(L \otimes L').
$$ 

\begin{enumerate}
\item 
Apply Algorithm \ref {LMmultalg} to compute $L \otimes_{\Z\langle G\rangle} L'$.
\item
Lift $d$ to $L$ and $d'$ to $L'$, and then apply the
composition
$$
L \times L' \to L \otimes_{\Z\langle G\rangle} L' \to 
(L \otimes L')/m(L \otimes L').
$$
\end{enumerate}
\end{algorithm}

For all $G$, $u$, and $m\in\Z_{>0}$, 
by the proof of Theorem \ref{WPgpfin} there is a bound on the runtime
of the previous algorithm that holds uniformly for all $L$, $L'$,
$d$, and $d'$, and this bound is polynomial in the length of the data
specifying $G$, $u$, and $m$.

Applying basis reduction, and iterating Algorithm \ref {dd'alg} using
an addition chain for $r$, gives the following
polynomial-time algorithm.
It replaces the polynomial chains in \S 7.4 of
the Gentry-Szydlo paper \cite{GS}.

\begin{algorithm}
\label{dtotheralg}
Given $G$, $u$, an invertible $G$-lattice $L$, positive integers $m$ and $r$, and
$d\in L/mL$,    
the algorithm computes 
$L^{\otimes r}$ and $d^{\otimes r}\in L^{\otimes r}/mL^{\otimes r}$. 
\end{algorithm}

Note that it is $\log(r)$ and not $r$ that enters in the runtime.
This means that very high powers of lattices can be computed without
coefficient blow-up, thanks to the basis reduction that takes place in
Algorithm \ref{LMmultalg}(ii). The fact that this is possible
was one of the crucial ideas of Gentry and Szydlo.

\section{The extended tensor algebra $\Lambda$}
\label{tensorsect}
The extended tensor algebra $\Lambda$ is a single algebraic structure
that comprises all rings and lattices that our main algorithm needs,
including their inner products.

Suppose $L$ is an invertible $G$-lattice. 
Letting $L^{\otimes 0} = \Z\langle G\rangle$ and 
letting
$$L^{\otimes m} =  
{L} \otimes_{\Z\langle G\rangle} \cdots \otimes_{\Z\langle G\rangle} {L} 
\quad \text{(with $m$ $L$'s)}$$  
and 
$$L^{\otimes (-m)} = \overline{L}^{\otimes m}= 
\overline{L} \otimes_{\Z\langle G\rangle} \cdots 
\otimes_{\Z\langle G\rangle} \overline{L}$$
for all $m \in \Z_{>0}$, 
define the extended tensor algebra 
$$
\Lambda  =  \bigoplus_{i\in\Z} L^{\otimes i} 
=  \ldots
\oplus \overline{L}^{\otimes 3} \oplus \overline{L}^{\otimes 2} \oplus \overline{L} \oplus
\Z\langle G\rangle \oplus L \oplus L^{\otimes 2} \oplus L^{\otimes 3} \oplus \ldots 
$$
(``extended'' because we extend the usual notion to include negative
exponents $L^{\otimes (-m)}$).
Each $L^{\otimes i}$ is an invertible $G$-lattice, and represents $[L]^i$.
For simplicity, we denote $L^{\otimes i}$ by $L^i$.
For all $j\in\Z$ we have 
$\overline{L^j} = \overline{L}^j = L^{-j}.$
Note that computing the $G$-lattice $L^{-1}=\overline{L}$ is trivial; just compose
the $G$-action map $G \to \GL(n,\Z)$ with the map
$G \to G$, $\sigma\mapsto \overline{\sigma}$.
The ring structure on $\Lambda$ is defined as the ring structure on the tensor algebra, supplemented with the lifted inner product $\cdot$ of Definition \ref{dotproddef}.
Let 
$\Lambda_\Q = \Lambda \otimes_\Z \Q.$

\begin{prop}
\label{LambdaProp}
\begin{enumerate}[leftmargin=*]
\item 
The extended tensor algebra 
$\Lambda$ is a commutative ring containing $\Z\langle G\rangle$ as a subring;
\item 
for all  $j\in \Z$, the action of $G$ on $L^j$ becomes multiplication in $\Lambda$;
\item 
$\Lambda$ has an involution $x\mapsto \overline{x}$ extending both the involution
of $\Z\langle G\rangle$ and the map $L \isom \overline{L}$;
\item 
if $j\in \Z$, then the lifted inner product 
$\cdot : L^j \times \overline{L^j} \to \Z\langle G\rangle$ 
becomes
multiplication in $\Lambda$, with $\overline{L^j}=\overline{L}^j$;
\item
if $j\in \Z$, then for all $x,y\in L^j$ we have 
$\langle x,y\rangle = t(x\overline{y});$
\item 
if $j\in \Z$ and $e \in L^j$ is short, then $\overline{e} = e^{-1}$ in $L^{-j}$;
\item
if $\gamma$ is as in Lemma \ref{aimpliesdlem}, then $\gamma\in \Lambda_\Q^\ast$,
one has 
$L_\Q^i = \Q\langle G\rangle\gamma^i$ 
for all $i\in\Z$,
and $\Lambda_\Q$ may be identified with the Laurent polynomial ring
$\Q\langle G\rangle [\gamma,\gamma^{-1}]$.
\item
if $e \in L$ is short, then 
$\Lambda = \Z\langle G\rangle [e,e^{-1}],$
where the right side is the subring of $\Lambda$ generated by
$\Z\langle G\rangle$, $e$, and $e^{-1}$, which is a Laurent polynomial ring.
\end{enumerate}
\end{prop}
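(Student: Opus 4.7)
The plan is to realize $\Lambda$ as a graded subring of a formal Laurent polynomial ring, which reduces all assertions to direct calculations in commutative algebra.

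First I would apply Lemma \ref{aimpliesdlem} to fix $\gamma \in L_\Q$ with $L_\Q = \Q\langle G\rangle \gamma$, and set $z = \gamma \cdot \overline{\gamma} \in \Q\langle G\rangle^\ast$; by part (v) of that lemma, $L$ is $G$-isomorphic to $L_{(I, z^{-1})}$ where $I = \{x \in \Q\langle G\rangle : x\gamma \in L\}$. Iterating Example \ref{tensorLdef} yields $L^i \cong L_{(I^i, z^{-i})}$ for $i \geq 0$, and for negative $i$ one applies Example \ref{LIwexdot} together with Theorem \ref{I1I2isom} (which identifies $\overline{L_{(I, z^{-1})}} = L_{(\overline{I}, z^{-1})}$ with $L_{(I^{-1}, z)}$ via multiplication by $z$). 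I introduce the formal Laurent polynomial ring $R = \Q\langle G\rangle[T, T^{-1}]$ and embed $\Lambda_\Q = \bigoplus_i L^i_\Q$ into $R$ by sending $\alpha \in L^i_\Q \cong I^i$ to $\alpha T^i$. The embedding is a $\Z$-graded bijection (each $I^i_\Q = \Q\langle G\rangle$), and it turns the tensor multiplication $L^i \otimes L^j \to L^{i+j}$ into the product $(\alpha T^i)(\beta T^j) = \alpha\beta T^{i+j}$ in $R$. Since $R$ is commutative, this equips $\Lambda$ with a commutative ring structure containing $\Z\langle G\rangle$ in degree $0$, proving (i). The $G$-action on $L^i_\Q$ is multiplication by $\sigma$ on the $\Q\langle G\rangle$-coefficient, giving (ii).

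For (iii), I define an involution on $R$ by $\overline{T} = zT^{-1}$; since $\overline{z} = z$, this is a well-defined ring involution that agrees with the original bar map on $\Z\langle G\rangle$ in degree $0$, and its restriction to degree $1$ sends $\beta T$ to $z\overline{\beta}T^{-1}$, which matches the $G$-isomorphism from $L = L_{(I, z^{-1})}$ to $L^{-1} = L_{(I^{-1}, z)}$ (multiplication by $z$ applied to $\overline{\beta}$) obtained from Theorem \ref{I1I2isom}. This shows $\overline{L^i} = L^{-i}$ in $\Lambda$. For (iv), apply Example \ref{LIwexdot} to $L^i = L_{(I^i, z^{-i})}$: the lifted inner product is $x \cdot \overline{y} = z^i \cdot x\overline{y}$, and the corresponding product in $R$ is $(\alpha T^i)(z^i\overline{\beta}T^{-i}) = z^i\alpha\overline{\beta}$, so they match. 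Part (v) is immediate from (iv) and \eqref{tdoteqn}.

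For (vii), $\gamma$ corresponds under our identification to an element of $\Q\langle G\rangle^\ast$ lying in $I$, so $\gamma T$ is a unit in $R$; since $T = \gamma^{-1} \cdot \gamma T$, the element $\gamma T$ together with its inverse generates $R$ over $\Q\langle G\rangle$, and the grading by powers of $T$ guarantees no nontrivial $\Q\langle G\rangle$-linear relation among powers of $\gamma T$, so $\Lambda_\Q$ is identified with $\Q\langle G\rangle[\gamma, \gamma^{-1}]$. Part (vi) follows from Proposition \ref{eshortlem}(iii): $e$ short gives $e \cdot \overline{e} = 1$, i.e., $\overline{e} = e^{-1}$ in $\Lambda$. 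Finally, for (viii), if $e \in L$ is short, Theorem \ref{shortthm}(ii) gives $L = \Z\langle G\rangle e$, tensoring yields $L^i = \Z\langle G\rangle e^i$ for $i \geq 0$, and (vi) extends this to $i < 0$; hence $\Lambda = \bigoplus_i \Z\langle G\rangle e^i = \Z\langle G\rangle[e, e^{-1}]$. The main obstacle is step (iii): verifying that the involution on $R$ is correctly induced by the bar map on every graded piece. This hinges on correctly computing via Theorem \ref{I1I2isom} that the $G$-isomorphism from $\overline{L^i}$ to $L^{-i}$ is multiplication by $z^i$, which is what pins down the compact formula $\overline{T} = zT^{-1}$ and hence the consistency of everything else.
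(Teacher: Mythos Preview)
Your approach is essentially the one the paper intends: the paper's proof consists of the single sentence ``The proof is straightforward. It is best to begin with (vii),'' and your plan does exactly that, realizing $\Lambda_\Q$ as a Laurent polynomial ring and reading off the remaining assertions. The only quibble is a notational slip in your treatment of (vii): under your identification $L_\Q \cong I_\Q = \Q\langle G\rangle$ via $x\gamma \leftrightarrow x$, the element $\gamma$ corresponds to $1 \in \Q\langle G\rangle$ (not to ``an element lying in $I$'', since $1$ need not belong to $I$), so in $R$ the image of $\gamma$ is simply $T$, not ``$\gamma T$''; once you write this cleanly the identification $\Lambda_\Q \cong \Q\langle G\rangle[\gamma,\gamma^{-1}]$ is immediate.
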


\begin{proof}
The proof is straightforward. It is best to begin with (vii).
\end{proof}

All computations in $\Lambda$ and in 
$\Lambda/m\Lambda=\bigoplus_{i\in\Z} L^i/mL^i$  
with  $m\in\Z_{>0}$ that occur in our algorithms 
are done with homogeneous elements only, where 
the set of  homogeneous elements of $\Lambda$ is $\bigcup_{i\in\Z}L^i$.

If $A$ is a commutative ring, 
let $\mu(A)$ denote the subgroup of $A^\ast$ consisting of the roots of unity,
i.e., the elements of finite order.
The following result will allow us to construct a polynomial-time
algorithm to find $k$-th roots of short vectors, when they exist.

\begin{prop}
\label{rootsofunitythm}
Suppose $L$ is an invertible $G$-lattice, $\r\in\Z_{>0}$, and $\nu$ is a short vector 
in the $G$-lattice $L^r$. 
Let $$A=\Lambda/(\nu-1).$$
Identifying $\bigoplus_{i=0}^{\r-1} L^i \subset \Lambda$ with its
image in $A$, we can view 
$A = \bigoplus_{i=0}^{\r-1} L^i$ 
as a $\Z/\r\Z$-graded ring.
Then:
\begin{enumerate}

\item
$G \subseteq \mu(A) \subseteq \bigcup_{i=0}^{\r-1} L^i$,
\item
$\{ e\in L : e\cdot \bar{e}=1\} = \mu(A) \cap L$, 
\item
$|\mu(A)|$ is divisible by $2n$ and divides $2n\r$, 
\item
the degree map $\mu(A) \to \Z/r\Z$ that takes
$e\in \mu(A)$ to $j$ such that $e\in L^j$ is surjective if and only if 
$\mu(A) \cap L \neq \emptyset$,
and
\item
there exists  $e\in L$ for which $e\cdot \bar{e}=1$
if and only if $\#\mu(A)=2n\r$.
\end{enumerate}
\end{prop}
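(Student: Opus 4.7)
The plan is to adapt the trace-based characterization of $\mu(\Z\langle G\rangle)$ (namely that $\alpha\in\mu(\Z\langle G\rangle)$ forces $t(\alpha\overline\alpha) = \sum_\sigma a_\sigma^2 = 1$ via Lemma \ref{miscAlem}(ii)(c), hence $\alpha \in G$) to the larger ring $A$. The analog of the trace is $t_A := t\circ \pi_0:A\to\Z$, where $\pi_0:A = \bigoplus_{i=0}^{r-1}L^i \to L^0 = \Z\langle G\rangle$ is the projection onto the degree-$0$ summand. Since $\xi_i\overline{\xi_j} \in L^{i-j}$ lives in degree $i-j\bmod r$ of $A$, the degree-$0$ part of $\xi\overline\xi$ for $\xi = \sum_{i=0}^{r-1}\xi_i$ is just $\sum_i \xi_i\overline{\xi_i}\in\Z\langle G\rangle$, so by Proposition \ref{LambdaProp}(v),
\[
t_A(\xi\overline\xi) \;=\; \sum_{i=0}^{r-1}\langle\xi_i,\xi_i\rangle\in\Z_{\ge 0}.
\]
The main task is then to show $t_A(\xi\overline\xi) = 1$ for every $\xi\in\mu(A)$; then the right side forces exactly one $\xi_j$ to be short and all the others to vanish.

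To establish $t_A(\xi\overline\xi) = 1$, I would develop a complex-analytic formula for $t_A$ paralleling Lemma \ref{psimisclem}(ii). Using Proposition \ref{LambdaProp}(vii), write $\nu = c\gamma^r$ with $c\in\Q\langle G\rangle^\ast$; combining Proposition \ref{eshortlem}(iii) with Proposition \ref{LambdaProp}(iv) gives $\nu\overline\nu = 1$, hence $c\overline c\, z^r = 1$ where $z = \gamma\overline\gamma$. Thus $A_\Q = \Q\langle G\rangle[\gamma]/(\gamma^r-c^{-1})$, and applying Lemma \ref{psimisclem}(iv) gives
\[
A_\C \;\cong\; \prod_{\psi\in\Psi}\C[\gamma]/(\gamma^r-\psi(c)^{-1}) \;\cong\; \C^{nr},
\]
with $\C$-algebra homomorphisms $\tilde\psi_{(\psi,\alpha)}:A_\C\to\C$ indexed by pairs $(\psi,\alpha)$ satisfying $\alpha^r = \psi(c)^{-1}$. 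Checking on the $\Q\langle G\rangle$-basis $\{1,\gamma,\ldots,\gamma^{r-1}\}$ of $A_\Q$ (using that the $i$-th power sum of the $r$-th roots of $\psi(c)^{-1}$ vanishes when $0<i<r$) yields
\[
t_A \;=\; \frac{1}{nr}\sum_{(\psi,\alpha)}\tilde\psi_{(\psi,\alpha)}.
\]
Separately, I would verify the involution-compatibility $\tilde\psi(\overline\xi) = \overline{\tilde\psi(\xi)}$ on $A_\R$; this reduces to $\psi(z) = |\alpha|^2$, which drops out of $c\overline c\, z^r = 1$ after applying $\psi$ and comparing absolute values.

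With these tools, part (i) is immediate: for $\xi\in\mu(A)$ each $\tilde\psi(\xi)$ is a root of unity in $\C$, so $|\tilde\psi(\xi)|=1$ and $t_A(\xi\overline\xi) = \frac{1}{nr}\sum|\tilde\psi(\xi)|^2 = 1$, and combined with the displayed sum-of-squares formula for $t_A(\xi\overline\xi)$, $\xi$ is homogeneous and short. The inclusion $G\subseteq\mu(A)$ is clear since $G\subseteq\Z\langle G\rangle^\ast$ has order $2n$. For the converse (needed for (ii) and the later parts), if $\xi\in L^j$ is short, then $\xi^r\in L^{jr}$ and hence $\xi^r\nu^{-j}\in L^0 = \Z\langle G\rangle$; a short calculation using $\overline{\nu^{-j}} = \nu^j$ gives $(\xi^r\nu^{-j})\overline{(\xi^r\nu^{-j})} = 1$, so by the original $\Z\langle G\rangle$ trace argument $\xi^r\nu^{-j}\in G$, a root of unity of order dividing $2n$. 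Hence $\xi^{2nr}=1$ in $A$, so $\xi\in\mu(A)$. Part (ii) is now immediate via Proposition \ref{eshortlem}(iii).

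For the remainder, the degree map $\deg:\mu(A)\to\Z/r\Z$ is well-defined by (i) and is a group homomorphism because $A$ is $\Z/r\Z$-graded as a ring. Its kernel $\mu(A)\cap L^0 = \mu(\Z\langle G\rangle) = G$ has order $2n$, so $|\mu(A)| = 2n\cdot|\mathrm{image}(\deg)|$ divides $2nr$; this is (iii). Part (iv) is the observation that a subgroup of the cyclic group $\Z/r\Z$ is all of $\Z/r\Z$ iff it contains the generator $1$, iff some element of $\mu(A)$ lies in $L^1 = L$; part (v) combines (iv) and (iii). I expect the main obstacle to be the involution-compatibility $\tilde\psi(\overline\xi) = \overline{\tilde\psi(\xi)}$, which is precisely what bridges the algebraic bar-involution and complex conjugation in $\C$; this is the step where the hypothesis that $\nu$ is short (i.e.\ $\nu\overline\nu = 1$) gets used in an essential way.
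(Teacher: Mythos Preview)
Your proposal is correct and follows essentially the same approach as the paper: identify the $\C$-algebra homomorphisms from $A$ as pairs $(\psi,\alpha)$, verify the involution-compatibility $\tilde\psi(\overline\xi)=\overline{\tilde\psi(\xi)}$ via the identity $\psi(z)=|\alpha|^2$ (which the paper also derives from $\nu\overline\nu=1$), deduce that roots of unity are homogeneous short vectors, and finish with the degree map. The paper organizes two steps slightly differently---it shows $e\overline e=1$ for $e\in\mu(A)$ via $\bigcap_\varphi\ker\varphi=0$ rather than via your averaged trace formula, and it obtains the converse inclusion by bounding the order of the group $E=\{e:e\overline e=1\}$ rather than by your direct computation $\xi^{2nr}=1$---but these are cosmetic variations on the same argument.
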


\begin{proof}
Since the ideal $$(\overline{\nu}-1) = (\nu^{-1}-1) = (1-\nu) = (\nu-1),$$
the map $a\mapsto \overline{a}$ induces an involution on $A$.

Next we show that the natural map 
$$\bigoplus_{i=0}^{\r-1} L^i \to \Lambda/(\nu-1)=A$$ is bijective.
For surjectivity, by Proposition \ref{LambdaProp}(vi)   
we have $\nu L^j = L^{j+r}$ for all $j\in\Z$, and thus
$L^{j+r}$ and $L^j$  
have the same image under the natural map $\Lambda\to \Lambda/(\nu-1)=A$.
For injectivity, suppose 
$$
0 \neq a = \sum_{i=h}^j a_i\in\Lambda
$$ 
with $h\le j$,
with all $a_i\in L^i$, and with $a_h \neq 0$ and $a_j \neq 0$.
Then 
$$
(\nu -1)a = \sum_{i=h}^{j+r} b_i
$$ 
with $b_i\in L^i$ where $b_h = -a_h \neq 0$ and $b_{j+r} = \nu a_j \neq 0$,
and therefore 
$$
(\nu -1)a \notin \bigoplus_{i=0}^{r-1}L^i.
$$
Hence we have 
$$(\nu -1)\Lambda \cap \bigoplus_{i=0}^{r-1}L^i = \{ 0\}.$$
The injectivity now follows.

Recall that $\Psi$ is the set of $\C$-algebra homomorphisms from 
$\C\langle G\rangle$ to $\C$.
Letting  
$A_\Q = A \otimes_\Z \Q,$ 
we have
$$
A_\Q = \Lambda_\Q/(\nu -1)\Lambda_\Q \quad \text{and }
\quad \Lambda_\Q = \bigoplus_{i\in\Z} L_\Q^i.
$$
Since $L$ is invertible, by Lemma \ref{aimpliesdlem}
there exists $\gamma\in L_\Q$ such that 
$$
L_\Q = \Q\langle G\rangle\cdot \gamma
$$
with $z=\gamma\overline{\gamma} \in \Q\langle G\rangle^\ast$
and $\psi(z) \in \R_{>0}$ for all $\psi\in\Psi$.
By Proposition \ref{LambdaProp}(vii) we have  
$\gamma\in L_\Q^\ast$, and 
$$
L_\Q^j = \Q\langle G\rangle\cdot \gamma^j
$$ 
for all $j\in\Z$, and 
$$\Lambda_\Q = \bigoplus_{i\in\Z} L_\Q^i = \Q\langle G\rangle[\gamma,\gamma^{-1}].$$
Thus, there exists $\delta\in \Q\langle G\rangle^\ast$
such that $\nu = \delta \gamma^r$.
The set of ring homomorphisms from $A$ to $\C$ can be identified
with the set of ring homomorphisms from $A_\Q$ to $\C$, which is 
$$\{ \text{ring homomorphisms $\varphi : \Lambda_\Q \to \C : \varphi(\nu) = 1$} \}.$$
The latter set  
can be identified with 
$$\{ (\psi,\zeta) : \psi\in\Psi , \zeta \in \C^\ast, \psi(\delta)\zeta^r = 1 \}$$
via the map 
$$\varphi \mapsto (\varphi|_{\Q\langle G\rangle},\varphi(\gamma))$$
and its inverse 
$$
(\psi,\zeta) \mapsto (\sum_i a_i\gamma^i \mapsto \sum_i \psi(a_i)\zeta^i),
$$ 
and has size $nr = \dim_\Q(A_\Q)$.
Since 
$$
1=\nu\overline{\nu} = (\delta \gamma^r) \overline{(\delta \gamma^r)} = 
\delta\overline{\delta} z^r,
$$ 
we have
$$\psi(\delta)\overline{\psi(\delta)}\psi(z)^r = 1 =
\psi(\delta)\overline{\psi(\delta)}(\zeta\overline{\zeta})^r,$$
so $\psi(z)^r = (\zeta\overline{\zeta})^r$.
Since $\psi(z) \in \R_{>0}$, we have $\psi(z) = \zeta\overline{\zeta}$.
Since $\overline{\gamma} = z\gamma^{-1}$, we now have 
$$
\varphi(\overline{\gamma}) = \varphi(z)\zeta^{-1}=\overline{\zeta} = \overline{\varphi(\gamma)}.
$$
By Lemma \ref{psimisclem}(i) we have 
$\psi(\bar{\alpha}) = \overline{\psi(\alpha)}$
for all $\alpha\in \Q\langle G\rangle$.
Since $A_\Q$ is generated as a ring by $\Q\langle G\rangle$ and
$\gamma$, it follows that $\varphi(\bar{\alpha}) = \overline{\varphi(\alpha)}$
for all $\alpha\in A_\Q$ and  all ring homomorphisms $\varphi : A_\Q \to \C$.

Applying Lemma \ref{KalglemBourb}(ii) to the commutative
$\Q$-algebra $A_\Q$ shows that 
$$
\bigcap_\varphi \ker \varphi = 0.
$$

Let 
$$
E = \{ e\in A: e\overline{e}=1 \},
$$ a subgroup of $A^\ast$.

If $e\in\mu(A)$, 
then  $\varphi(e)$ is a root of unity in $\C$
for all ring homomorphisms $\varphi:A \to \C$, so 
$$1 = \varphi(e)\overline{\varphi(e)} = 
\varphi(e)\varphi(\overline{e}) = 
\varphi(e\overline{e}).$$ Since $\bigcap_\varphi \ker \varphi = 0$, we have
$e\overline{e} =1$. 
Thus, $\mu(A) \subseteq E$.

Conversely, suppose $e\in E$. 
Write 
$e = \sum_{i=0}^{r-1} \varepsilon_i$ 
with $\varepsilon_i\in L^i$,
so $\overline{e} = \sum_{i=0}^{r-1} \overline{\varepsilon}_i$
with $\overline{\varepsilon}_i\in L^{-i} = L^{r-i}$ in $A$.
We have 
$$
1=e\overline{e} = 
\sum_{i=0}^{r-1} \varepsilon_i\overline{\varepsilon}_i,
$$
the degree $0$ piece of $e\overline{e}$.
Applying the map $t$ of Definition \ref{tdef} and using \eqref{tdoteqn}
we have $1 = \sum_{i=0}^{r-1} \langle \varepsilon_i, \varepsilon_i\rangle$.
It follows that there exists $j$ such that 
$\langle \varepsilon_j, \varepsilon_j \rangle = 1$, and 
$\varepsilon_i = 0$ if $i\neq j$.
Thus, 
$$E  \subseteq \bigcup_{i=0}^{r-1} \{ e\in L^i: \langle e, e\rangle=1 \},$$
giving (i). 
By Proposition \ref{eshortlem}(iii) and Example \ref{shortex} we have 
$E \cap \Z\langle\G\rangle = G,$ so
$\mu(\Z\langle\G\rangle) = G$.

The degree map from $E$ to $\Z/r\Z$ that takes
$e\in E$ to $j$ such that $e\in L^j$ is a group homomorphism
with kernel $E \cap \Z\langle\G\rangle = G$.
Therefore, $\#E$ divides $\#G \#(\Z/r\Z) = 2nr$.
Thus, $E \subseteq \mu(A) \subseteq E$, so $E = \mu(A)$ and we have
(ii) and (iii).
The degree map is surjective if and only if $\#\mu(A)=2n\r$,
and if and only if $1$ is in the image, i.e., if and only if
$\mu(A) \cap L \neq \emptyset$.
This gives (iv).
Part (v) now follows from (ii).
\end{proof}

\begin{rem}
In the proof of Proposition \ref{rootsofunitythm} we showed that
$\mu(\Z\langle\G\rangle) = G$.
\end{rem}

\section{Short vectors}
\label{shortvectalgsect}
Recall that $G$ is a finite abelian group of order $2n$ equipped with
an element $u$ of order $2$. 
The main result of this section is Algorithm \ref{findingealg}.

\begin{defn}
The exponent of a finite group $H$
is the least positive integer $k$ such that 
$\sigma^k = 1$ for all $\sigma\in H$. 
\end{defn}

The exponent of a finite group $H$ divides $\#H$ and has the same prime factors as $\#H$.

\begin{notation}
Let $k$ denote the exponent of $G$.
\end{notation}

By Theorem \ref{shortthm}, the $G$-isomorphisms $\Z\langle G\rangle \isom L$
for a $G$-lattice $L$ are in one-to-one correspondence with the short vectors of $L$, 
and if a short $e\in L$ exists, then the
short vectors of $L$ are exactly the $2n$ vectors
$\{ \sigma e : \sigma\in G\}$. 
With $k$ the exponent of $G$, we have  
$$(\sigma e)^k = \sigma^ke^k = e^k$$ in $\Lambda$. 
Hence for invertible $L$, all short vectors in $L$ have the same $k$-th power 
$e^k \in \Lambda$.
At least philosophically, it is easier to find things that are uniquely
determined. We look for $e^k$ first, and then recover $e$ from it.

The $n$ of \cite{GS} is an odd prime, so the group exponent $k=2n$, and 
$\Z\langle G\rangle$ embeds in
$\Q(\zeta_n)\times\Q$, where $\zeta_n\in\C^\ast$ is a primitive
$n$-th root of unity.
Since the latter is a product of only two number fields, the number of
zeros of $X^{2n}-v^{2n}$ is at most $(2n)^2$, and  
the Gentry-Szydlo method for finding $v$ from $v^{2n}$
is sufficiently efficient.
If one wants to generalize \cite{GS} to the case where $n$ is not prime,
then the smallest $t$ such that $\Z\langle G\rangle$ embeds in
$F_1\times\ldots\times F_t$ with number fields $F_i$ can be as large as $n$.
Given $\nu$, the number of zeros of $X^k-\nu$ could be as large as $k^t$.
Finding $e$ such that $\nu=e^k$ then requires a more efficient algorithm,
which we attain with Algorithm \ref{findingealg} below.

An {\bf order} is a commutative ring $A$ whose additive group
is isomorphic to $\Z^n$ for some $n\in \Z_{\ge 0}$. 
We specify an order by saying how to multiply any two vectors in a given basis.
In \cite{rootsofunity} we prove the following result,
and give the associated algorithm.

\begin{prop}
\label{rootsofunitylem}
There is a deterministic polynomial-time algorithm that, given an order $A$,
determines a set of generators for the group $\mu(A)$ of roots of unity in
$A^\ast$.
\end{prop}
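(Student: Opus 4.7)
The plan is to reduce the problem to computing roots of unity in number fields and then intersecting with $A$. First, form $A_\Q = A \otimes_\Z \Q$, a finite-dimensional commutative $\Q$-algebra, and compute its nilradical $N$ by linear algebra (an element $x$ lies in $N$ iff its multiplication matrix on $A_\Q$ is nilpotent, which can be tested by squaring). Because $A_\Q$ has characteristic zero, the reduction map $\mu(A_\Q) \to \mu(A_\Q/N)$ is a bijection: an element $1+x$ with $0 \neq x \in N$ cannot be a root of unity in characteristic zero, while each root of unity in $A_\Q/N$ lifts uniquely to $A_\Q$ by Hensel's lemma (applied to the separable polynomial $X^m - 1$ over $\Q$ and the nilpotent ideal $N$). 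So I replace $A_\Q$ by its reduced quotient.

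Next, decompose the reduced finite-dimensional commutative $\Q$-algebra $A_\Q^{\mathrm{red}}$ as a product $K_1 \times \cdots \times K_d$ of number fields. This is a standard deterministic polynomial-time computation: factor the minimal polynomial of a suitable element over $\Q$ using LLL-based polynomial factorization, and peel off primitive idempotents. For each $K_j$, compute a generator $\zeta_j$ of the finite cyclic group $\mu(K_j)$: since a primitive $w$-th root of unity lies in $K_j$ only if $\phi(w) \le [K_j:\Q]$, the order $w_j = \#\mu(K_j)$ is polynomially bounded in $\rank(A)$, and one finds $w_j$ together with $\zeta_j$ by iterating over candidate $w$ and factoring the cyclotomic polynomial $\Phi_w(X)$ over $K_j$.

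At this point $\mu(A_\Q) = \prod_j \langle \zeta_j \rangle \cong \prod_j \Z/w_j\Z$ is given by explicit generators of polynomial bit length. To extract $\mu(A) = \mu(A_\Q) \cap A$, fix a $\Z$-basis of $A$ (hence a $\Q$-basis of $A_\Q$) and express each $\zeta_j$ in this basis with rational coordinates. The condition that $\prod_j \zeta_j^{a_j}$ lies in $A$ is that its coordinate vector is integral, which is a system of linear congruences on $(a_1,\ldots,a_d) \in \prod_j \Z/w_j\Z$. Solve it via Hermite/Smith normal form to obtain a set of generators for the subgroup $\mu(A)$, which I then write out as explicit elements of $A$.

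The main obstacle is that $\#\mu(A_\Q) = \prod_j w_j$ can be exponential in $\rank(A)$, so $\mu(A_\Q)$ cannot be enumerated element by element. The key is to keep $\mu(A_\Q)$ in its compact product-of-cyclic-groups presentation throughout and to perform the intersection with $A$ purely through linear algebra over $\Z$, which runs in time polynomial in $\sum_j \log w_j$ and in the input bit size. Bounding the bit sizes of the factor fields $K_j$ and of the generators $\zeta_j$ (needed so that the final congruence system has polynomially-sized coefficients) is the other technical point, and follows from standard size estimates for outputs of LLL-based polynomial factorization.
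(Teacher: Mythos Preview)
The paper does not prove this proposition here; it defers the proof and the algorithm to the companion paper \cite{rootsofunity}, so there is no ``paper's own proof'' to match against. I will therefore just assess whether your sketch stands on its own.

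Your outline is reasonable through the computation of $\mu(A_\Q)\cong\prod_j\Z/w_j\Z$, but the intersection step contains a real gap. You assert that the condition $(\zeta_1^{a_1},\ldots,\zeta_d^{a_d})\in A$ is ``a system of linear congruences on $(a_1,\ldots,a_d)$'' that can be read off from the rational coordinates of the $\zeta_j$ and solved by Hermite/Smith normal form. It is true that the solution set is a \emph{subgroup} of $\prod_j\Z/w_j\Z$ (since $\mu(A)$ is a group), hence is in principle the kernel of some homomorphism of finite abelian groups. But the map $(a_j)\mapsto(\zeta_j^{a_j})$ is multiplicative-to-additive, not $\Z$-linear, so the integrality of the coordinate vector does \emph{not} translate into explicit linear congruences on the exponents that one can hand to an HNF routine. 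Concretely, for $w_j>2$ the points $1,\zeta_j,\zeta_j^2,\ldots$ do not lie on an affine line in $K_j$, so the coordinate functions are genuinely nonlinear in $a_j$. You therefore have a membership oracle for a subgroup of a finite abelian group of potentially exponential size, and you have not explained how to produce generators for that subgroup in polynomial time. This is exactly the hard part of the problem; without it, the argument does not yield a polynomial-time algorithm.

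A second, smaller issue: your description of computing the nilradical (``tested by squaring'') is not quite an algorithm for the ideal $N$, only a test for individual elements; the clean way is to take the radical of the trace form $(x,y)\mapsto\mathrm{Tr}_{A_\Q/\Q}(xy)$, which is linear algebra. Likewise, ``peel off primitive idempotents'' via a single primitive element is delicate deterministically; you should invoke a known deterministic splitting algorithm for semisimple commutative $\Q$-algebras rather than sketch one.
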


\begin{algorithm}
\label{findingealg}
Given $G$ of exponent $k$, $u$,  
a fractional $\Z\langle G\rangle$-ideal $I$,
an element $w\in \Q\langle G\rangle^\ast$ such that 
$I\overline{I} = \Z\langle G\rangle \cdot w$ and
$\psi(w) \in\R_{>0}$ for all 
$\psi \in\Psi$, 
a short vector $\nu$ 
in the $G$-lattice $L_{(I^k,w^k)}$,
and the order 
$A = 
\bigoplus_{i=0}^{k-1} I^i$
with multiplication 
$$
I^i\times I^j \to I^{i+j}, \quad (x,y)\mapsto xy \quad
\text{ if \quad $i+j<k$}
$$ 
and 
$$
I^i\times I^j \to I^{i+j-k}, \quad (x,y)\mapsto xy/\nu \quad
\text{ if \quad $i+j\ge k$},
$$
the algorithm determines whether there exists $\alpha \in L_{(I,w)}$
such that $\nu=\alpha^k$ in $L_{(I^k,w^k)}$
and $\alpha\cdot \overline{\alpha}=1$, and if so,
finds one.

\begin{enumerate}
\item 
Apply Proposition \ref{rootsofunitylem} to compute generators for
$\mu(A)$. 
\item 
Apply the degree map  $\mu(A) \to \Z/k\Z$ from Proposition \ref{rootsofunitythm}(iv)
to the generators, and check whether
the images generate $\Z/k\Z$. If they do not,
output ``no $e$ exists'';
if they do, compute an element $\alpha\in\mu(A)$ whose
image under the degree map is $1$.  
\item
Check whether $\nu=\alpha^k$.
 If not, output ``no $\alpha$ exists''.
If so, output $\alpha$.
\end{enumerate}
\end{algorithm}

\begin{prop}
\label{findingethm}
Algorithm \ref{findingealg} produces correct output and runs in polynomial time.
\end{prop}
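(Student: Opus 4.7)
The plan is to interpret the given order $A = \bigoplus_{i=0}^{k-1} I^i$ as the graded ring $\Lambda/(\nu - 1)$ studied in Proposition \ref{rootsofunitythm} (applied with $r = k$), where $\Lambda$ is the extended tensor algebra of $L = L_{(I,w)}$. Because $L$ is invertible by Theorem \ref{invertequiv}, Example \ref{tensorLdef} identifies $L^i$ with $L_{(I^i,w^i)}$, so $\bigoplus_{i=0}^{k-1} L^i$ agrees with $A$ as a $\Z\langle G\rangle$-module. The multiplication rules prescribed in the algorithm (ordinary multiplication when $i+j < k$, division by $\nu$ when $i+j \ge k$) are precisely those induced on this direct sum by the relation $\nu = 1$ in $\Lambda/(\nu-1)$; note that $(\nu) = I^k$ as fractional $\Z\langle G\rangle$-ideals by Theorem \ref{shortthm}(ii), which legitimates the division by $\nu$ on ideals. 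Thus the algorithm's $A$ really is the ring appearing in Proposition \ref{rootsofunitythm}.

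With this identification, correctness has three parts. Step 1 produces generators of $\mu(A)$ by Proposition \ref{rootsofunitylem}. In Step 2, the image of $\mu(A)$ under the degree map $\mu(A) \to \Z/k\Z$ of Proposition \ref{rootsofunitythm}(iv) is generated by the degree images of these generators, so testing whether they span $\Z/k\Z$ decides surjectivity. If surjectivity fails, Proposition \ref{rootsofunitythm}(iv) gives $\mu(A) \cap L = \emptyset$, so $L$ has no short vector, hence no $\alpha$ with $\alpha \cdot \overline{\alpha} = 1$ at all. If surjectivity holds, an element of $\mu(A)$ of degree $1$ is obtained as an integer-exponent product of the generators via the extended Euclidean algorithm, and by Proposition \ref{rootsofunitythm}(ii) it is a short vector of $L$. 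The key point for Step 3 is that since $k$ is the exponent of $G$, any two short vectors of $L$ differ by some $\sigma \in G$ by Theorem \ref{shortthm}(iv), so their $k$-th powers coincide in $L^k$, because $(\sigma \alpha)^k = \sigma^k \alpha^k = \alpha^k$. Consequently, if some short $\alpha_0 \in L$ satisfies $\alpha_0^k = \nu$, then so does the $\alpha$ produced in Step 2; conversely, if $\alpha^k \ne \nu$, then no short vector of $L$ has $k$-th power $\nu$.

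For the runtime, the exponent $k$ divides $\#G = 2n$, so $A$ has $\Z$-rank $nk \le 2n^2$ and can be specified by a multiplication table of polynomial size in the input. Proposition \ref{rootsofunitylem} then executes Step 1 in polynomial time. Step 2 is finite linear algebra over $\Z/k\Z$ — computing the subgroup generated by a bounded number of images, and when it is all of $\Z/k\Z$, expressing $1$ as an integer combination modulo $k$ — which is polynomial. Step 3 is a single $k$-th power in $A$, carried out by repeated squaring, polynomial in $\log k$ and in the bit-size of an element of $A$.

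The main obstacle is really the structural identification in the first paragraph: one must verify that the abstractly specified order $A$ coincides (as a ring) with $\Lambda/(\nu-1)$ in order to transport both Proposition \ref{rootsofunitythm}, which governs $\mu(A)$ theoretically, and Proposition \ref{rootsofunitylem}, which computes it algorithmically. Once that bridge is built, the remainder of the proof is simply combinatorics of the degree grading together with the observation that $k = \mathrm{exp}(G)$ collapses the ambiguity in short vectors of $L$ under their $k$-th power map.
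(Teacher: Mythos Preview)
Your proof is correct and follows essentially the same route as the paper: identify the given order $A$ with $\Lambda/(\nu-1)$ for $L=L_{(I,w)}$ and $r=k$, invoke Proposition~\ref{rootsofunitythm}(ii),(iv) for correctness of Steps~2--3, and use Theorem~\ref{shortthm}(iv) together with $k=\exp(G)$ to show that all short vectors of $L$ have the same $k$-th power. Your treatment is in fact more explicit than the paper's in justifying the ring identification and the runtime; the paper compresses the runtime argument into the single observation $\#\mu(A)=2nk\le(2n)^2$ from Proposition~\ref{rootsofunitythm}(v), whereas you spell out the size of $A$ and the cost of each step separately.
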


\begin{proof}
We apply Proposition \ref{rootsofunitythm} with $r=k$.
With $L=L_{(I,w)}$, our order $A$ can be identified with
the ring $\Lambda/(\nu-1)$ of that proposition. 
Suppose Step (ii) produces $\alpha\in\mu(A)$ of degree $1$.
Then $$\alpha\in\mu(A)\cap L_{(I,w)}= \{ \varepsilon\in L_{(I,w)} : \varepsilon\cdot \bar{\varepsilon}=1\}$$
by Proposition \ref{rootsofunitythm}(ii).
By Proposition \ref{eshortlem}(iii), this set is the set of
short vectors in $L_{(I,w)}$.
By Theorem \ref{shortthm}(iv), if a short $\varepsilon\in L_{(I,w)}$ exists, then the
short vectors in $L_{(I,w)}$ are exactly the $2n$ vectors
$\{ \sigma \varepsilon : \sigma\in G\}$, which all have the same $k$-th power 
since $k$ is the exponent of $G$.
By this and Proposition \ref{rootsofunitythm}(iv),
if any step fails then the desired $\alpha$ does not exist.
The algorithm runs in polynomial time since 
$$
\#\mu(A)= 2nk \le (2n)^2
$$
by Proposition \ref{rootsofunitythm}(v).
\end{proof}

\section{Finding auxiliary prime powers}
\label{LinnikApp}

In this section we present an algorithm to find auxiliary
prime powers $\ell$ and $m$. To 
bound the runtime, we 
use Heath-Brown's version of Linnik's theorem
in analytic number theory.

Recall that $G$ is a finite abelian group equipped with an element $u$ of order $2$, and
$k$ is the exponent of $G$.

\begin{notation}
For $m \in \Z_{>0}$ let 
$k(m)$ denote the exponent of the unit group $(\Z\langle G\rangle/(m))^\ast$.
\end{notation}

\begin{lem}
\label{LinnikLem}
Suppose $p$ is a prime number and 
$j\in\Z_{>0}$.
Then:
\begin{enumerate}
\item
$(\Z/p^j\Z)^\ast \subset (\Z\langle\G\rangle/(p^j))^\ast$; 
\item
if $p$ is odd, then the exponent of $(\Z/p^j\Z)^\ast$ is $(p-1)p^{j-1}$;
\item 
if $p\equiv 1$ {\rm mod} $k$, then $k(p^j) = (p-1)p^{j-1}$.
\end{enumerate}
\end{lem}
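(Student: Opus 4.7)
\medskip
\noindent\textbf{Proof plan for Lemma \ref{LinnikLem}.}

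For (i), the plan is to use the natural ring homomorphism $\Z \to \Z\langle G\rangle$ (sending $1$ to $1$), which upon reduction modulo $p^j$ yields a ring homomorphism $\Z/p^j\Z \to \Z\langle G\rangle/(p^j)$. To see this map is injective, I would invoke Lemma \ref{miscAlem}(i): the set $S$ is a $\Z$-basis of $\Z\langle G\rangle$, and $1 \in S$. So if $a \in \Z$ lies in $p^j \Z\langle G\rangle$, comparison of the coefficient of $1$ on the two sides forces $a \in p^j \Z$. An injective ring homomorphism carries units to units, giving (i). Part (ii) is the classical fact that for odd $p$, the group $(\Z/p^j\Z)^\ast$ is cyclic of order $(p-1)p^{j-1}$, so its exponent equals its order; I would simply cite this.

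For (iii), I would first establish the easy direction: combining (i) and (ii) shows $(p-1)p^{j-1}$ divides $k(p^j)$. Note that $2\mid k$ because $u\in G$ has order $2$, so $p\equiv 1\pmod k$ forces $p$ odd and $\gcd(p,k)=1$; since $k$ and $\#G=2n$ have the same prime divisors, we also have $p\nmid \#G$. The substance of (iii) is the reverse divisibility $k(p^j)\mid (p-1)p^{j-1}$, which I will obtain from a product decomposition of the ring $(\Z/p^j\Z)\langle G\rangle$.

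The main step, and the only real obstacle, is to show that the hypothesis $p\equiv 1\pmod k$ forces
$$(\Z/p^j\Z)\langle G\rangle \;\cong\; (\Z/p^j\Z)^n$$
as rings. The plan is to do this in two stages. First, at the prime level: since $\F_p$ contains a primitive $k$-th root of unity (as $k\mid p-1$) and every character $\psi\colon G\to\overline{\F_p}^\ast$ takes values in $\mu_k$, every such $\psi$ actually lands in $\F_p^\ast$; thus the $n$ characters with $\psi(u)=-1$ are each fixed by $\Gal(\overline{\F_p}/\F_p)$, giving an $\F_p$-algebra isomorphism $\F_p\langle G\rangle\isom \F_p^n$ by the Artin-style decomposition (this is the characteristic-$p$ analogue of Lemma \ref{psimisclem}(iv), and is justified by Maschke semisimplicity since $p\nmid\#G$). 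Second, I would lift this decomposition: the orthogonal idempotents of $\F_p\langle G\rangle$ lift uniquely by Hensel's lemma to orthogonal idempotents of $(\Z/p^j\Z)\langle G\rangle$, and a dimension/rank count over $\Z/p^j\Z$ shows the resulting product decomposition has each factor free of rank $1$, giving the claimed isomorphism.

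Once this decomposition is in hand, the unit group $(\Z\langle G\rangle/(p^j))^\ast$ is a product of $n$ copies of $(\Z/p^j\Z)^\ast$, whose exponent is $(p-1)p^{j-1}$ by (ii). Combined with the lower bound, this yields $k(p^j)=(p-1)p^{j-1}$. The hard part is really just the Hensel-lifting step; the rest is organizational.
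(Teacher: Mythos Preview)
Your proposal is correct, but for part~(iii) the paper takes a different and rather more elementary route. Instead of decomposing the ring, the paper argues directly by induction on~$j$. For the base case $j=1$, it observes that the Frobenius endomorphism $x\mapsto x^p$ of $\F_p\langle G\rangle$ fixes each $\sigma\in G$ (since $\sigma^p=\sigma^{p-1}\sigma=\sigma$, as $k\mid p-1$), and since $G$ generates the ring, Frobenius is the identity; hence $x^{p-1}=1$ for all units. For the inductive step, if $x^{(p-1)p^{j-2}}\equiv 1\pmod{p^{j-1}}$, writing $x^{(p-1)p^{j-2}}=1+p^{j-1}v$ and expanding $(1+p^{j-1}v)^p$ by the binomial theorem gives $x^{(p-1)p^{j-1}}\equiv 1\pmod{p^j}$. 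This yields $k(p^j)\mid(p-1)p^{j-1}$, and the reverse divisibility follows from (i) and (ii) as you noted.

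Your structural approach has the merit of revealing more: you actually identify $(\Z/p^j\Z)\langle G\rangle$ as a product of $n$ copies of $\Z/p^j\Z$, which is stronger than what the lemma asserts. (Incidentally, since $p\nmid\#G$, you can write down the primitive idempotents $e_\psi=\frac{1}{n}\sum_{\sigma\in S}\psi(\sigma^{-1})\sigma$ explicitly over $\Z/p^j\Z$ without invoking Hensel, so the ``hard part'' you flag is softer than it might appear.) The paper's Frobenius-plus-binomial argument, by contrast, avoids any appeal to semisimplicity, character theory, or idempotent lifting, and is entirely self-contained in a few lines.
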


\begin{proof}
Parts (i) and (ii) are easy. For (iii),
we proceed by induction on $j$.
If $p\equiv 1$ mod $k$, then $p$ is odd.
We first take $j=1$. The map $x\mapsto x^p$ is a ring endomorphism
of $\Z\langle\G\rangle/(p)$ and is the identity on $G$, since the exponent
$k$ divides $p-1$.
Since $G$ generates the ring, the map is the identity and
therefore $x^p=x$ for all
$x\in \Z\langle\G\rangle/(p)$ and $x^{p-1}=1$ for all
$x\in (\Z\langle\G\rangle/(p))^\ast$.

Now suppose $j>1$. Suppose $x\in \Z\langle\G\rangle$ maps to a unit
in $\Z\langle\G\rangle/(p^{j})$.
By the induction hypothesis, 
$$
x^{(p-1)p^{j-2}} \equiv 1 \mod p^{j-1}.
$$
Thus, $x^{(p-1)p^{j-2}} = 1 + p^{j-1}v$ for some $v\in\Z\langle\G\rangle$.
Since $(j-1)p \ge j$ we have
$$
x^{(p-1)p^{j-1}} = (1 + p^{j-1}v)^p = 
1 + \binom{p}{1}p^{j-1}v + \cdots + p^{(j-1)p}v^p \equiv 1 \mod p^j.
$$
Thus, $k(p^j)$ divides $(p-1)p^{j-1}$ for all $j\in\Z_{>0}$.
Part (iii) now follows from (i) and (ii).
\end{proof}

\begin{thm}[Heath-Brown, Theorem 6 of \cite{HB}]
\label{LinnikHeathBrown}
There is an effective constant $c>0$ such that if
$a,t \in \Z_{>0}$ and $\gcd(a,t)=1$, then the smallest
prime $p$ such that $p \equiv a$ mod $t$ is at most $ct^{5.5}$.
\end{thm}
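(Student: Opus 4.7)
The result quoted is Linnik's theorem with the explicit exponent $5.5$ due to Heath-Brown, so I would not attempt a fresh proof; rather, I would sketch the classical analytic strategy and indicate where the exponent is controlled. The plan is to analyze the distribution of primes in the arithmetic progression $a \bmod t$ through the Dirichlet $L$-functions $L(s,\chi)$ for characters $\chi$ modulo $t$. By orthogonality of characters, the Chebyshev-type function $\psi(x;t,a) = \sum_{n\le x,\, n\equiv a\,(t)} \Lambda(n)$ admits an explicit formula in which the error term is a sum over non-trivial zeros of all $L(s,\chi)$, and bounding the least prime by $c t^{L}$ reduces to quantitative control of how close to the line $\mathrm{Re}(s)=1$ these zeros may lie.

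The approach rests on Linnik's three classical principles for handling those zeros. First, a standard zero-free region: apart from at most one exceptional real Siegel zero per modulus, every non-trivial zero satisfies $\mathrm{Re}(s) < 1 - c_{1}/\log(t(|\mathrm{Im}(s)|+2))$. Second, a log-free zero-density estimate of the shape
\[
\sum_{\chi \bmod t} N(\alpha,T,\chi) \ll (tT)^{A(1-\alpha)},
\]
where suppression of the usual logarithmic factor is what turns the resulting bound for the least prime from quasi-polynomial into polynomial in $t$. Third, the Deuring--Heilbronn phenomenon: if a Siegel zero $\beta$ lies unusually close to $1$, then every other zero of every $L(s,\chi \bmod t)$ is repelled correspondingly further from $1$, exactly compensating for the harm done by $\beta$ in the explicit formula.

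Combining these three ingredients in the explicit formula for $\psi(x;t,a)$ yields $\psi(x;t,a) \sim x/\varphi(t)$ once $x \ge c\, t^{L}$, and in particular a prime $p \equiv a \pmod t$ with $p \le c\, t^{L}$. The refinement to $L=5.5$ in Heath-Brown's work comes from sharpening the numerical constants in the log-free density estimate and in the Deuring--Heilbronn repulsion, via refined mean-value theorems for Dirichlet polynomials and a careful balancing of the contributions of zeros close to the edge of the critical strip. The main obstacle is uniformity in $t$ together with the possible existence of a Siegel zero; what saves effectivity of the constant $c$, despite the ineffectivity of Siegel's theorem, is that the Deuring--Heilbronn principle provides an \emph{effective} trade-off between the location of such a zero and the repulsion of all others, so the final constant $c$ can be made explicit even though the hypothetical Siegel zero cannot.
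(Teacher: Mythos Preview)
Your proposal is appropriate: the paper does not prove this statement at all but simply quotes it as Theorem~6 of Heath-Brown's paper and uses it as a black box in the subsequent algorithm. Your decision not to attempt a self-contained proof, together with your accurate sketch of the standard Linnik machinery (zero-free region, log-free zero-density estimate, Deuring--Heilbronn repulsion) and of where Heath-Brown's numerical sharpening enters, matches both the intent of the paper and the content of the cited reference.
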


\begin{algorithm}
\label{LinnikThmAlg0}
Given positive integers $n$ and $k$ with $k$ even, the algorithm
produces prime powers $\ell = p^r$ and $m = q^s$ with 
$\ell, m \ge 2^{n/2} + 1$ such that
$p\equiv q\equiv 1$ mod $k$ and  
$\gcd(\varphi(\ell), \varphi(m)) = k$,
where $\varphi$ is Euler's phi function.

\begin{enumerate}
\item
Try $p=k+1, 2k+1, 3k+1,\ldots$ until the least prime 
$p\equiv 1$ mod $k$ is found.
\item 
Find the smallest $r\in\Z_{> 0}$ such that $p^r \ge 2^{n/2}+1$.
\item
Try $q=p+k, p+2k, \ldots$ until the least prime 
$q\equiv 1$ mod $k$ such that $\gcd((p-1)p,q-1) =k$ is found. 
\item
Find the smallest $s\in\Z_{> 0}$ such that $q^s \ge 2^{n/2}+1$.
\item
Let $\ell = p^r$ and $m=q^s$.
\end{enumerate}
\end{algorithm}

\begin{prop}
\label{LinnikThm}
Algorithm \ref{LinnikThmAlg0} runs in time $(n+k)^{O(1)}$.
\end{prop}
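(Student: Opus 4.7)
The plan is to bound the runtime of each of the five steps of Algorithm~\ref{LinnikThmAlg0} separately, showing each is $(n+k)^{O(1)}$.

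Step~(i) follows directly from Heath-Brown's Theorem~\ref{LinnikHeathBrown}: applied with $a=1$ and $t=k$, it yields that the smallest prime $p\equiv 1\pmod k$ satisfies $p\leq ck^{5.5}$, so the loop performs $O(k^{4.5})$ iterations, each a primality test on an integer of bit-length $O(\log k)$. Steps~(ii) and~(iv) determine exponents $r,s=O(n)$ in $\operatorname{poly}(n,\log k)$ time, and step~(v) is trivial.

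The main obstacle is step~(iii), where I must bound the least prime $q>p$ satisfying $q\equiv 1\pmod k$ and $\gcd((p-1)p,q-1)=k$. I will first unravel the gcd condition by comparing $\pi$-adic valuations: given $q\equiv 1\pmod k$, the condition $\gcd((p-1)p,q-1)=k$ is equivalent to $q\not\equiv 1\pmod p$ together with $v_\pi(q-1)=v_\pi(k)$ for every prime $\pi\mid p-1$ with $v_\pi(p-1)>v_\pi(k)$. By the Chinese Remainder Theorem these requirements pin down an explicit residue class $a\pmod M$, where $M:=p\cdot\prod_{\pi\mid p-1,\,\pi\neq p}\pi^{v_\pi(k)+1}$ divides $p(p-1)^2$, so $M=O(p^3)$. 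Taking $a\equiv 2\pmod p$ and $a\equiv 1+\pi^{v_\pi(k)}\pmod{\pi^{v_\pi(k)+1}}$ for each $\pi$ in the above set (or $a\equiv 1$ when $v_\pi(p-1)=v_\pi(k)$) makes $\gcd(a,M)=1$; the only potential obstruction is the case $\pi=2$ with $v_2(k)=0$, which is ruled out by the hypothesis that $k$ is even.

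To force the prime obtained to exceed $p$, I will enlarge the modulus. By Bertrand's postulate there are $\gg p/\log p$ primes in $(p,2p]$, while $M$ has only $O(\log p)$ distinct prime divisors, so a prime $\ell\in(p,2p]$ coprime to $M$ exists. Set $M':=M\ell=O(p^4)$ and choose $a'$ with $a'\equiv a\pmod M$ and $a'\equiv 2\pmod\ell$. Any prime $q\equiv a'\pmod{M'}$ with $q\leq p<\ell$ must satisfy $q=2$; but $a$ is odd (because $a\equiv 1\pmod k$ with $k$ even and $2\mid M$), so $q\neq 2$ and hence $q>p$. Applying Theorem~\ref{LinnikHeathBrown} to the pair $(a',M')$ then delivers a valid prime $q\leq c(M')^{5.5}=\operatorname{poly}(k)$, so step~(iii)'s loop terminates within $\operatorname{poly}(k)$ iterations, each performing a primality test and a gcd in polynomial time. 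Summing the contributions of all five steps yields total runtime $(n+k)^{O(1)}$.
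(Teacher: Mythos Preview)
Your proof is correct and follows the same overall strategy as the paper: bound $p$ via Heath-Brown with modulus $k$, then construct a coprime residue class modulo some $M=\operatorname{poly}(p)$ whose primes satisfy the gcd condition, and apply Heath-Brown again. The details of step~(iii) differ. The paper writes $p-1=k_1k_2$ with every prime factor of $k_1$ dividing $k$ and $\gcd(k_2,k)=1$, and observes that $q\equiv 2\pmod p$, $q\equiv 1+k\pmod{k_1}$, $q\equiv 2\pmod{k_2}$ already forces $\gcd((p-1)p,q-1)=k$; this yields modulus $p(p-1)$ and hence $q\le c(p^2)^{5.5}\le c^{12}k^{60.5}$. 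Your prime-by-prime valuation analysis arrives at an equivalent residue class via a modulus dividing $p(p-1)^2$, which is perfectly fine but gives a slightly looser bound.

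Your extra step introducing the auxiliary prime $\ell$ to force $q>p$ is unnecessary: since your $a$ satisfies $a\equiv 2\pmod p$ and $a$ is odd (because $k$ is even), any prime $q\equiv a\pmod M$ is odd and congruent to $2$ mod $p$, hence $q\ge p+2>p$ automatically. The paper relies on the same observation implicitly. Dropping $\ell$ would simplify your argument and improve your exponent, though of course either way the bound is polynomial in $k$.
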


\begin{proof}
Algorithm \ref{LinnikThmAlg0} takes as input $n,k\in\Z_{>0}$ with $k$ even, 
and computes positive integers
$r$ and $s$ and primes $p$ and $q$ such that:
\begin{itemize}
\item
$p\equiv q\equiv 1$ mod $k$, 
\item
$\gcd((p-1)p^{r-1},(q-1)q^{s-1}) =k$, 
\item
$p^r \ge 2^{n/2}+1$, and
\item
$q^s \ge 2^{n/2}+1$.
\end{itemize}

We next show that Algorithm \ref{LinnikThmAlg0} terminates, with correct output, 
in the claimed time.
By Theorem \ref{LinnikHeathBrown} above,
the prime $p$ found by Algorithm \ref{LinnikThmAlg0} satisfies
$p \le ck^{5.5}$ with an effective constant $c>0$.
Primality testing can be done by trial division.
If $p-1=k_1k_2$ with every prime divisor of $k_1$ also dividing $k$ and
with $\gcd(k_2,k)=1$, then to have 
$$
\gcd((p-1)p,q-1) =k
$$ 
it suffices 
to have 
$$q\equiv 2 \text{ mod $p$ \,\,  and  \,\, $q\equiv 1+k$ mod $k_1$  \,\,  and  \,\,  
$q\equiv 2$ mod $k_2$.}
$$ 
This gives a congruence 
$$
q\equiv a \text{ mod $p(p-1)$}
$$ 
for some $a$ with $\gcd(a,p(p-1))=1$.
Theorem \ref{LinnikHeathBrown} implies that Algorithm \ref{LinnikThmAlg0} 
produces a prime $q$ with the desired properties and satisfying 
$$q \le c(p^2)^{5.5} 
\le c(ck^{5.5})^{11}= 
c^{12}k^{60.5}.$$ 
The upper bounds on $p$ and $q$ imply that Algorithm \ref{LinnikThmAlg0} runs in
time $(n+k)^{O(1)}.$
\end{proof}

\begin{rem}
In practice, Algorithm~\ref{LinnikThmAlg0} is {\em much} faster than
implied by the proof of Proposition~\ref{LinnikThm};
Theorem \ref{LinnikHeathBrown} is unnecessarily pessimistic,
and in practice one does not need to find a prime $q$ that is congruent to
$2$~mod~$pk_2$ and to $1+k$~mod~$k_1$.
In work in progress, we get better bounds for the runtime of
our main algorithm, and avoid using the
theorem of Heath-Brown or Algorithm \ref{LinnikThmAlg0}, 
by generalizing our theory to the setting
of ``CM orders''.
\end{rem}

Algorithm \ref{LinnikThmAlg0}  
immediately yields the following algorithm.

\begin{algorithm}
\label{LinnikThmAlg}
Given $G$ and $u$, the algorithm
produces prime powers $\ell$ and $m$ such that
$$
\ell, m \ge 2^{n/2} + 1 \quad \text{ and \quad $\gcd(k(\ell), k(m)) = k$,}
$$
where $k$ is the exponent of $G$, and produces the
values of $k(\ell)$ and $k(m)$.

\begin{enumerate}
\item
Compute $n$ and $k$.
\item 
Run Algorithm \ref{LinnikThmAlg0} to compute prime powers 
$\ell=p^r$ and $m=q^s$  with
$$
\ell, m \ge 2^{n/2} + 1
$$ 
such that
$$
p\equiv q\equiv 1 \text{ mod $k$  
 \quad and \quad $\gcd(\varphi(\ell), \varphi(m)) = k$.}
$$
\item
Compute $k(\ell) = (p-1)p^{r-1}$ and $k(m) = (q-1)q^{s-1}$.
\end{enumerate}
\end{algorithm}

By Lemma \ref{LinnikLem}(iii),
Algorithm \ref{LinnikThmAlg} produces the desired output.
It follows from Proposition \ref{LinnikThm} that 
Algorithm~\ref{LinnikThmAlg} runs in 
polynomial time (note that the input 
in Algorithm~\ref{LinnikThmAlg} includes the group 
law on $G$).

\begin{rem}
Our prime powers $\ell$ and $m$ play the roles that in the
Gentry-Szydlo paper \cite{GS} 
were played by auxiliary prime numbers 
$$
P, P' > 2^{(n+1)/2}
$$
such that 
$$
\gcd(P-1,P'-1)=2n.
$$ 
Our $k(\ell)$ and $k(m)$ replace their $P-1$ and $P'-1$.
While the Gentry-Szydlo primes $P$ and $P'$ are found with at best a
probabilistic algorithm, we can find $\ell$ and $m$ in
polynomial time with a deterministic algorithm.
(Further, the ring elements they work with were required to not be zero divisors modulo
$P$, $P'$ and other small auxiliary primes;
we require no analogous condition on $\ell$ and $m$,
since by Definition \ref{invertdef2}, when $L$ is invertible then for
{\em all} $m$, the $(\Z/m\Z)\langle G\rangle$-module $L/mL$ is free of
rank $1$.) 
\end{rem}

The next result will provide the proof of correctness for a key step in our main algorithm.

\begin{lem}
\label{lemmaForAlgorithm}
Suppose  
$e$ is a short vector in an invertible $G$-lattice $L$, 
suppose $\ell, m\in\Z_{\ge 3}$, and
suppose $e_{\ell m}\in L$ is such that $e_{\ell m} + \ell mL$ generates $L/\ell mL$
as a $(\Z/\ell m\Z)\langle G\rangle$-module. 
Then $e^{k(m)}$ is the unique short vector in 
the coset $$e_{\ell m}^{k(m)} + mL^{k(m)},$$ and there is a unique
$s \in ((\Z/\ell\Z)\langle G\rangle)^\ast$ such that
$$
e^{k(m)} \equiv se_{\ell m}^{k(m)} \mod \ell L^{k(m)}.
$$
If further $b\in\Z_{>0}$ and $bk(m)\equiv k$ mod $k(\ell)$,
then $e^k$ is the unique short vector in
$s^be_{\ell m}^k + \ell L^{k}$.
\end{lem}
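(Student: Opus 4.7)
Plan: The proof reduces to a single multinomial expansion in the commutative ring $\Lambda$. By the generation hypothesis there is $c \in \Z\langle G\rangle$ with $e - ce_{\ell m} \in \ell m L$. I would first observe that the classes of $c$ modulo $\ell$ and modulo $m$ are units: since $L$ is invertible, $L/\ell L$ and $L/mL$ are free of rank one over $(\Z/\ell\Z)\langle G\rangle$ and $(\Z/m\Z)\langle G\rangle$ respectively, and Theorem \ref{shortthm}(ii) says $e$ generates $L$ as a $\Z\langle G\rangle$-module, hence generates both quotients; the hypothesis says the same of $e_{\ell m}$; two generators of a free rank-one module differ by a unit. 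Writing $e = ce_{\ell m} + \ell m y$ with $y \in L$ and raising to the $k(m)$-th power inside $\Lambda$ using commutativity, every term other than $(ce_{\ell m})^{k(m)}$ in the multinomial expansion is divisible by $\ell m$, so
\[
e^{k(m)} \equiv c^{k(m)} e_{\ell m}^{k(m)} \pmod{\ell m L^{k(m)}}.
\]
This is the key congruence from which all three claims follow.

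For the first claim, I would reduce this congruence modulo $m$. Since $k(m)$ is by definition the exponent of $((\Z/m\Z)\langle G\rangle)^*$, we have $c^{k(m)} \equiv 1 \pmod m$, giving $e^{k(m)} \equiv e_{\ell m}^{k(m)} \pmod{m L^{k(m)}}$. The vector $e^{k(m)}$ is short: by Proposition \ref{LambdaProp}(v),(vi) and $e\bar e = 1$ (Proposition \ref{eshortlem}(iii)), one has $\langle e^{k(m)}, e^{k(m)}\rangle = t(e^{k(m)} e^{-k(m)}) = t(1) = 1$. Uniqueness in the coset $e_{\ell m}^{k(m)} + mL^{k(m)}$ is then immediate from Proposition \ref{findingekmprop} applied to the unimodular $G$-lattice $L^{k(m)}$, since $m \ge 3$.

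For the second claim, reducing the same key congruence modulo $\ell$ and setting $s$ to be the class of $c^{k(m)}$ in $((\Z/\ell\Z)\langle G\rangle)^*$ gives existence. Uniqueness of $s$ amounts to showing that $e_{\ell m}^{k(m)}$ is a free generator of $L^{k(m)}/\ell L^{k(m)}$ over $(\Z/\ell\Z)\langle G\rangle$, which holds because the canonical isomorphism of free rank-one modules $(L/\ell L)^{\otimes k(m)} \cong L^{k(m)}/\ell L^{k(m)}$ carries the $k(m)$-fold tensor of the generator $e_{\ell m}$ to $e_{\ell m}^{k(m)}$.

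For the third claim, I would run the multinomial expansion a second time at exponent $k$ to obtain $e^k \equiv c^k e_{\ell m}^k \pmod{\ell L^k}$. The hypothesis $bk(m) \equiv k \pmod{k(\ell)}$, together with the fact that $\bar c$ has order dividing $k(\ell)$ in $((\Z/\ell\Z)\langle G\rangle)^*$, gives $c^{bk(m)} \equiv c^k \pmod \ell$, so $s^b = c^{bk(m)} \equiv c^k \pmod \ell$; substituting yields $e^k \equiv s^b e_{\ell m}^k \pmod{\ell L^k}$. That $e^k$ is short follows from $e^k \overline{e^k} = 1$ in $\Lambda$ exactly as in the first claim, and uniqueness in its coset modulo $\ell L^k$ again from Proposition \ref{findingekmprop} with modulus $\ell \ge 3$. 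The only subtle point, and the main obstacle to a clean write-up, is to keep the identification $s^b = c^k$ strictly at the level of scalar coefficients in $(\Z/\ell\Z)\langle G\rangle$, rather than mistakenly comparing elements of the distinct lattices $L^{bk(m)}$ and $L^k$.
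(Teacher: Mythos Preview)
Your proof is correct and follows essentially the same strategy as the paper: express one generator in terms of the other via a unit of the appropriate residue ring, raise to the relevant exponent, and invoke Proposition~\ref{findingekmprop} for uniqueness. The paper works separately modulo $\ell$ and modulo $m$, writing $e_{\ell m} \equiv y_\ell e$ and $e_{\ell m} \equiv y_m e$, whereas you find a single $c$ with $e \equiv c\,e_{\ell m} \pmod{\ell m L}$ and then reduce; this gives you the explicit identification $s \equiv c^{k(m)} \pmod{\ell}$, which the paper does not write down. Consequently your third claim is a one-line computation $s^b \equiv c^{bk(m)} \equiv c^k \pmod{\ell}$, while the paper instead uses the auxiliary congruence $e_{\ell m}^{k(\ell)} \equiv e^{k(\ell)} \pmod{\ell}$ together with a decomposition $k = a k(\ell) + b k(m)$. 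Both routes are short; yours is marginally more direct. One cosmetic remark: your ``multinomial expansion'' is really just the observation that $e \equiv c\,e_{\ell m}$ in the commutative ring $\Lambda/\ell m\Lambda$ implies $e^{N} \equiv c^{N} e_{\ell m}^{N}$ there, which is how the paper phrases it and avoids writing out any expansion.
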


\begin{proof}
Since $e$ is short, we have $\Z\langle\G\rangle e = L$. Thus
for all $r\in\Z_{>0}$, the coset
$e+rL$ generates $L/rL$ as a $\Z\langle\G\rangle/(r)$-module.
We also have that $e_{\ell m} + mL$ 
generates $L/mL$ as a $\Z\langle\G\rangle/(m)$-module,
and $e_{\ell m} + \ell L$ generates $L/\ell L$ as a 
$\Z\langle\G\rangle/(\ell)$-module.
Thus, there exist $y_m\in (\Z\langle\G\rangle/(m))^\ast$
and $y_\ell\in (\Z\langle\G\rangle/(\ell))^\ast$ such that
$$
e_{\ell m}=y_me \text{ mod $mL$ \quad and \quad $e_{\ell m}=y_\ell e$ mod $\ell L$.}
$$
It follows that
$$
e_{\ell m}^{k(m)}\equiv e^{k(m)}\text{ mod ${m}L^{k(m)}$ \quad
and \quad $e_{\ell m}^{k(\ell)}\equiv e^{k(\ell)}$ mod ${\ell}L^{k(\ell)}$.}
$$

We have 
$$
(\Z/\ell\Z)\langle G\rangle e = L/\ell L = 
(\Z/\ell\Z)\langle G\rangle e_{\ell m}.
$$
Thus $$(\Z/\ell\Z)\langle G\rangle\cdot e^{k(m)} = L^{k(m)}/\ell L^{k(m)} = 
(\Z/\ell\Z)\langle G\rangle\cdot e_{\ell m}^{k(m)},$$ so
\begin{equation}
\label{sdefneq}
e^{k(m)} \equiv se_{\ell m}^{k(m)} \mod \ell L^{k(m)}
\end{equation} 
for a unique $s \in ((\Z/\ell\Z)\langle G\rangle)^\ast$.
We have $e \cdot \bar{e} = 1$, so 
$$e\in\Lambda^\ast \quad \text{ and \quad
$e+\ell\Lambda \in (\Lambda/\ell\Lambda)^\ast$.}
$$
By \eqref{sdefneq} we have 
$$
(e+\ell\Lambda)^{k(m)} = s(e_{\ell m}+\ell\Lambda)^{k(m)}
$$ 
in $\Lambda/\ell\Lambda = \bigoplus_{i\in\Z} L^i/\ell L^i$.
It follows that 
$$
e_{\ell m}+\ell\Lambda \in (\Lambda/\ell\Lambda)^\ast.
$$

If $ak(\ell)+bk(m)=k$ with $a\in\Z$, then 
$$e^k = (e^{k(\ell)})^a(e^{k(m)})^b 
\equiv (e_{\ell m}^{k(\ell)})^a(se_{\ell m}^{k(m)})^b \equiv 
s^be_{\ell m}^k \mod \ell\Lambda,$$ so
$s^be_{\ell m}^k + \ell L^k$ contains the short vector $e^k$ of $L^k$.
In both cases, uniqueness follows from Proposition \ref{findingekmprop}.
\end{proof}

\section{The main algorithm}
\label{algorsect}

Algorithm \ref{algor} below is the algorithm promised in Theorem \ref{mainthm}.
That it is correct
and runs in polynomial time follows from the
results above; see the discussion after the algorithm.
As before, $k$ is the exponent of the group $G$ and  
$k(j)$ is the exponent of $(\Z\langle G\rangle/(j))^\ast$ if
$j\in\Z_{>0}$.

\begin{algorithm}
\label{algor}
Given $G$, $u$, and a $G$-lattice $L$, the algorithm determines whether
there exists a $G$-isomorphism $\Z\langle G\rangle \isom L$, 
and if so, computes one.
\begin{enumerate}

\item 
Apply Algorithm \ref{elemprop2} to check whether $L$ is invertible. 
If it is not, terminate with ``no''.
\item 
Apply Algorithm \ref{LinnikThmAlg} to produce prime powers $\ell$ and $m$
as well as $k(\ell)$ and $k(m)$.
\item 
Use Proposition \ref{elemprop} to compute $e_{\ell m}$ and $e_2$.
\item 
Use Algorithm \ref{dtotheralg}
to compute the pair 
$$
(L^{k(m)},e_{\ell m}^{k(m)} + \ell mL^{k(m)}).
$$
Use Algorithm \ref{findingekmalg2}
to decide whether the coset 
$$
e_{\ell m}^{k(m)} + mL^{k(m)}
$$
contains a short vector $\nu_m \in L^{k(m)}$, and if so, compute it. 
Terminate with ``no'' if none exists.
\item
Compute $s \in (\Z/\ell\Z)\langle G\rangle$
such that
$$
\nu_m = se_{\ell m}^{k(m)} + \ell L^{k(m)}
$$ 
in $L^{k(m)}/\ell L^{k(m)}$.
\item 
Use the extended Euclidean algorithm to find  
$b\in\Z_{>0}$ such that 
$$
bk(m)\equiv k \text{ mod $k(\ell)$.}
$$
\item
Compute 
$$
I = \{ x\in \Q\langle G\rangle : xe_{2} \in L\}
$$
and compute $I^i$ for $i=2,\ldots,k$.
\item
Compute $s^b \in (\Z/\ell\Z)\langle G\rangle$ 
and  
$$
s^b(e_{\ell m}/e_2)^k + \ell I^{k} \in I^{k}/\ell I^{k}.
$$
Use Algorithm \ref{findingekmalg2}
to decide whether the coset 
$$s^b(e_{\ell m}/e_2)^k + \ell I^{k}$$
contains a short vector $\nu$ for the lattice $L_{(I^k,w^k)}$
where $$w = (e_2\cdot \overline{e_2})^{-1},$$
and if so, compute it. 
Terminate with ``no'' if none exists.
\item 
Construct the order $A = \bigoplus_{i=0}^{k-1} I^i$ with 
multiplication 
$$
I^i\times I^j \to I^{i+j}, \quad (x,y)\mapsto xy
\quad \text{if $i+j<k$}
$$ 
and 
$$
I^i\times I^j \to I^{i+j-k}, \quad (x,y)\mapsto xy/\nu
\quad \text{if $i+j\ge k$.}
$$
Apply Algorithm \ref{findingealg} to find
$\alpha\in L_{(I,w)}$ such that $\nu=\alpha^k$ and 
$\alpha \cdot \overline{\alpha}=1$ (or to prove there is no $G$-isomorphism).
Let $e=\alpha e_2\in L$, and let the map $\Z\langle G\rangle \isom L$ send $x$ to $xe$.
\end{enumerate}
\end{algorithm}

\begin{prop}
\label{algorpf}
Algorithm \ref{algor} is a deterministic polynomial-time algorithm 
that, given a finite abelian group $G$, 
an element $u\in G$ of order $2$, and a $G$-lattice $L$,
outputs a $G$-isomorphism $\Z\langle G\rangle \isom L$ or a proof that none exists.
\end{prop}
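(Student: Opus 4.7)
The plan is to verify correctness and polynomial runtime by tracking the algorithm step-by-step, using the logical pivot that, by Theorem \ref{shortthm}(iii), $L$ is $G$-isomorphic to $\Z\langle G\rangle$ if and only if $L$ is invertible and admits a short vector $e$; moreover, by Theorem \ref{shortthm}(i), such an $e$ determines the isomorphism by $x \mapsto xe$. Step (i) uses Algorithm~\ref{elemprop2} (and Proposition~\ref{elemprop2pf}) to handle invertibility; if $L$ fails this test, Lemma~\ref{stdGex} justifies the rejection. Otherwise we search for a short vector indirectly: its $k$-th power $e^k$ is uniquely determined (short vectors all lie in the $G$-orbit of each other and $k$ is the exponent of $G$, so they all have the same $k$-th power in $\Lambda$), whereas the number of short vectors grows linearly with $|G|$; finding $e^k$ first and then extracting a $k$-th root is the Gentry-Szydlo strategy reformulated.

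The correctness of the middle steps follows from Lemma~\ref{lemmaForAlgorithm}. In Step (iv), if a short $e$ exists, $e^{k(m)}$ is the unique short vector in the coset $e_{\ell m}^{k(m)} + mL^{k(m)}$; since $m \ge 2^{n/2}+1$ by Algorithm~\ref{LinnikThmAlg}, Algorithm~\ref{findingekmalg2} (via Proposition~\ref{findingekmprop2}) decides and produces $\nu_m$ or correctly rejects. Step (v) produces the unique $s \in ((\Z/\ell\Z)\langle G\rangle)^\ast$ with $\nu_m \equiv s e_{\ell m}^{k(m)} \pmod{\ell L^{k(m)}}$, again by Lemma~\ref{lemmaForAlgorithm}. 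Step (vi) succeeds because $\gcd(k(\ell),k(m)) = k$, which is guaranteed by Algorithm~\ref{LinnikThmAlg}. Step (vii) realizes $L$ as $L_{(I,w)}$ with $\gamma = e_2$ and $w = (e_2 \cdot \overline{e_2})^{-1}$, following Lemma~\ref{aimpliesdlem}; the identification extends to $L^k \cong L_{(I^k,w^k)}$ via Example~\ref{tensorLdef}, so that short vectors of $L^k$ correspond to short vectors of $L_{(I^k,w^k)}$. In Step (viii), Lemma~\ref{lemmaForAlgorithm} (second assertion) ensures that $e^k$ is the unique short vector in $s^b e_{\ell m}^k + \ell I^k$; since $\ell \ge 2^{n/2}+1$, Algorithm~\ref{findingekmalg2} recovers it or correctly rejects. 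Finally, Step (ix) is exactly the setup for Algorithm~\ref{findingealg}: by Proposition~\ref{findingethm} it produces $\alpha$ with $\alpha^k = \nu$ and $\alpha \cdot \overline{\alpha} = 1$, or proves none exists. Then $e = \alpha e_2$ is short in $L$ and $x \mapsto xe$ is the desired $G$-isomorphism by Theorem~\ref{shortthm}(i).

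For the runtime, the input bit-length includes $\#G$, so polynomial in the input means polynomial in $n$, $k$, and the specification of $L$. Algorithm~\ref{LinnikThmAlg} runs in time $(n+k)^{O(1)}$ by Proposition~\ref{LinnikThm}, and produces $\ell, m, k(\ell), k(m)$ of size polynomial in $n+k$. The tensor powers $L^{k(m)}$ and $e_{\ell m}^{k(m)}$ are computed by Algorithm~\ref{dtotheralg} in $O(\log k(m))$ calls to Algorithm~\ref{LMmultalg}; crucially, after each multiplication an LLL-reduced basis is computed, so by Proposition~\ref{LLLlem} and the finiteness argument underlying Theorem~\ref{WPgpfin} there is no coefficient blow-up. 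The ideal powers $I^i$ needed in Steps (vii)-(ix) involve only $i \le k$ and are used only modulo $\ell$ or in the order $A$ of $\Z$-rank $nk$, so all arithmetic stays polynomial in size. All remaining ingredients (Proposition~\ref{elemprop}, Algorithm~\ref{findingekmalg2}, Algorithm~\ref{findingealg}, and the extended Euclidean algorithm) are polynomial-time.

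The main obstacle I expect is bookkeeping: one must verify that the \emph{same} short vector $e^k$ of $L^k$ gets detected in Step (viii), even though Step (viii) works inside the ideal-theoretic realization $L_{(I^k,w^k)}$ while Step (iv) worked with the abstract $G$-lattice $L^k$. This requires matching the lifted inner product of $L^k$ with that of $L_{(I^k,w^k)}$ through the identification of Example~\ref{tensorLdef}, and tracking how the element $e_{\ell m}^k / e_2^k$ lives naturally in $I^k \subset \Q\langle G\rangle$. Once this compatibility is pinned down, the Gentry-Szydlo-style assembly in Steps (iv)-(viii) yields $e^k$, and the $k$-th root extraction of Section~\ref{shortvectalgsect} completes the proof.
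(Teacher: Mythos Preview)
Your proposal is correct and follows the same approach as the paper's proof: reduce to invertibility plus a short vector via Theorem~\ref{shortthm}(iii), use Lemma~\ref{lemmaForAlgorithm} to justify the recovery of $e^{k(m)}$ and then $e^k$ from cosets, and finish with Algorithm~\ref{findingealg}.

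There is one place where you are too quick. You assert that computing $I^2,\ldots,I^k$ in Step~(vii) is polynomial-time because ``$i\le k$ and [the powers] are used only modulo $\ell$ or in the order $A$ of $\Z$-rank $nk$.'' But the order $A$ must be built over $\Z$, not modulo anything, so one needs an honest $\Z$-basis of each $I^i$ with controlled bit size. The paper supplies a concrete argument here that you omit: writing $q=(L:\Z\langle G\rangle e_2)$, $r=(L:\Z\langle G\rangle e_q)$, and $\beta=e_q/e_2$, one has $\Z\langle G\rangle\subset I^i$ and $\Z\langle G\rangle\beta^i\subset I^i$ with indices dividing $q^i$ and $r^i$ respectively; since $\gcd(q,r)=1$ by Lemma~\ref{inviiilem}(i), this forces $I^i=\Z\langle G\rangle+\Z\langle G\rangle\beta^i$, so each $I^i$ is generated by just two elements whose sizes are directly controlled. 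Without some argument of this kind, your claim that ``all arithmetic stays polynomial in size'' is unjustified.
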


\begin{proof}
By Theorem \ref{shortthm}(iii), the
$G$-lattice $L$ is $G$-isomorphic to $\Z\langle G\rangle$ 
if and only if $L$ is invertible and has a short vector.
Algorithm \ref{elemprop2} checks whether $L$ is invertible. 
If it is, 
we look for an $e\in L$ such that $e\bar{e}=1$.

Algorithm \ref{LinnikThmAlg} produces prime powers 
$\ell, m \ge 2^{n/2} + 1$ such that 
$$\gcd(k(\ell), k(m)) = k.$$
The algorithm in Proposition~\ref{elemprop} produces $e_{\ell m}$,  
which then serves as both $e_m$ and $e_\ell$.
Algorithm \ref{findingekmalg2}
finds a short vector $\nu_{m}$ (if it exists) in the coset 
$$e_{\ell m} + mL^{k(m)} \in L^{k(m)}/mL^{k(m)}.$$
If $e\in L$ is short, then $\nu_m = e^{k(m)}$ by Lemma \ref{lemmaForAlgorithm}.

As in Lemma \ref{aimpliesdlem},
the set $I$ is an invertible $\Z\langle G\rangle$-ideal,
and the map 
$$
L_{(I,w)} \isom L, \qquad x\mapsto xe_2
$$
is an isomorphism of $G$-lattices, so $L = Ie_2$.
We next show that $I^i$ for $i=2,\ldots,k$ can be computed in
polynomial time.
Let 
$$
q = (L: \Z\langle G\rangle e_2).
$$
Then $L = \Z\langle G\rangle e_2 + \Z\langle G\rangle e_q$,
so $I = \Z\langle G\rangle  + \Z\langle G\rangle \beta$
where $\beta \in \Q\langle G\rangle$ and
$\beta = e_q/e_2 \in  \Lambda_\Q$.
We claim that $$I^i = \Z\langle G\rangle + \Z\langle G\rangle \beta^i$$
for all $i\in\Z_{>0}$. Namely, we have 
$$
L \supset \Z\langle G\rangle e_2 \supset  qL,
$$ 
so
$L^i \supset \Z\langle G\rangle e_2^i \supset  q^iL^i$.
Since $L^i = I^ie_2^i$, we have
$$
I^i \supset \Z\langle G\rangle \supset q^iI^i.
$$
Similarly, letting $r = (L: \Z\langle G\rangle e_q)$ we have
$$
I^i \supset \Z\langle G\rangle \beta^i \supset r^iI^i.
$$
Since $q$ and $r$ are coprime by Lemma \ref{inviiilem}(i), we have 
$$
I^i \supset \Z\langle G\rangle + \Z\langle G\rangle \beta^i \supset 
q^iI^i + r^iI^i = I^i,
$$ 
and the desired equality follows.
Now $\beta, \beta^2,\ldots,\beta^k$ are easily computable in
polynomial time, since $k\le 2n$.
 
By Lemma \ref{lemmaForAlgorithm},
if $\alpha\in L_{(I^k,w^k)}$   
is short then 
$\nu=\alpha^k$. 
Algorithm \ref{findingealg} then finds a short vector 
$\alpha\in L_{(I^k,w^k)}$,  
or proves that none exists.
Then $e=\alpha e_2$ is a short vector in $L$, and
the map $x\mapsto xe$ gives the desired $G$-isomorphism from 
$\Z\langle G\rangle$ to $L$.
\end{proof}

\begin{rem}
There is a version of the algorithm in which checking invertibility in
step (i) is skipped. In this case, the algorithm may misbehave at
other points, indicating that $L$ is not invertible and thus not
$G$-isomorphic to $\Z\langle G\rangle$ by Lemma \ref{stdGex}.
At the end one would check whether 
$\langle e,  e\rangle =1$ and
$\langle e, \sigma e\rangle =0$ for all $\sigma \neq 1,u$.
If so, then 
$\{\sigma e\}_{\sigma\in S}$ is an orthonormal basis for $L$,
and $x\mapsto xe$ gives the desired isomorphism; if not,
no such isomorphism exists.
\end{rem}

Thanks to Corollary \ref{invertequivcor2}, we can convert Algorithm \ref{algor}
to an algorithm to test whether two $G$-lattices are $G$-isomorphic (and
produce an isomorphism).

\begin{algorithm}
\label{algorLM}
Given $G$, $u$, and two invertible $G$-lattices $L$ and $M$, the algorithm
determines whether there is a $G$-isomorphism $M \isom L$, 
and if so, computes one.
\begin{enumerate}

\item 
Compute $L \otimes_{\Z\langle G\rangle} \overline{M}$.
\item
Apply Algorithm \ref{algor} to find   
a $G$-isomorphism $$\Z\langle G\rangle \isom L \otimes_{\Z\langle G\rangle} \overline{M},$$ or a proof that none exists. In the latter case, terminate with ``no''.
\item
Using this map and the map 
$$\overline{M} \otimes_{\Z\langle G\rangle} M \to \Z\langle G\rangle, \quad
\overline{y} \otimes x \mapsto \overline{y} \cdot x,$$
output the composition of the (natural) maps
$$M \isom \Z\langle G\rangle \otimes_{\Z\langle G\rangle} M \isom
L \otimes_{\Z\langle G\rangle} \overline{M}\otimes_{\Z\langle G\rangle} M
\isom L \otimes_{\Z\langle G\rangle} \Z\langle G\rangle \isom L.$$
\end{enumerate}
\end{algorithm}

\bigskip

\noindent{\bf Acknowledgments:} 
We thank Craig Gentry, Daniele Micciancio, Ren\'e Schoof, Mike Szydlo,
and all the participants of the 2013 Workshop on Lattices with Symmetry.
We thank the reviewers for very helpful comments.

\bigskip

\noindent{\bf Note from the Editor in Chief:} 
This paper was solicited by the editors,
due to the extended abstract \cite{LenSil} on which it is based 
having been selected as one of the best papers at the conference Crypto 2014.

\end{document}